     \def\section{\@startsection{section}{1}%
     \z@{.7\linespacing\@plus\linespacing}{.5\linespacing}%
     {\bfseries
     \centering
     }}
     \def\@secnumfont{\bfseries}
\newtheorem{theorem}{Theorem}[section]
\newtheorem{lemma}[theorem]{Lemma}
\newtheorem{proposition}[theorem]{Proposition}
\newtheorem{corollary}[theorem]{Corollary}
\theoremstyle{definition}
\theoremstyle{remark}
\newtheorem{remark}[theorem]{Remark}
\numberwithin{equation}{section}
\newtheorem{assumption}[theorem]{Assumption}
\newtheorem{claim}{Claim}
\newcommand{\Rd}{\mathbb{R}^{d}}
\newcommand{\hs}{\mathbb{R}_+ \times \mathbb{R}^{d}}
\newcommand{\BN}{\mathbb{N}}
\newcommand{\BZ}{\mathbb{Z}}
\newcommand{\BR}{\mathbb{R}}
\newcommand{\CB}{\mathcal{B}}
\newcommand{\CS}{\mathcal{S}}
\newcommand{\CF}{\mathcal{F}}
\newcommand{\CM}{\mathcal{M}}
\newcommand{\CA}{\mathcal{A}}
\newcommand{\CD}{\mathcal{D}}
\newcommand{\CK}{\mathcal{K}}
\newcommand{\bfP}{\mathbf{P}}
\newcommand{\bfE}{\mathbf{E}}
\newcommand{\bfi}{\mathbf{1}}
\begin{document}

\title[SDEs with Singular Drift and Driven by Stable Processes]{On Weak Solutions of SDEs with Singular Time-Dependent Drift and Driven by Stable Processes}
\author[P. Jin]{Peng Jin}
\address{Peng Jin:  Fakult\"at f\"ur Mathematik und Naturwissenschaften, Bergische Universit\"at Wuppertal, 42119 Wuppertal, Germany}
\email{jin@uni-wuppertal.de}

\subjclass[2000]{primary 60H10, 60J75; secondary 60J35}

\keywords{stochastic differential equations, singular drift, stable process, weak solutions, martingale problem, resolvent}

\begin{abstract}
Let $d\ge2$. In this paper, we study weak solutions for the following type of stochastic differential equation
\[
\begin{cases}dX_{t}=dS_{t}+b(s+t, X_{t})dt, & \ t\ge 0,\\
   \   X_{0}=x,
    \end{cases}
\]
where $(s,x)\in \hs$ is the initial starting point, $b: \hs \to \Rd$ is measurable, and $S=(S_{t})_{t \ge 0}$ is a $d$-dimensional $\alpha$-stable process with index $\alpha \in (1,2)$. We show that if the $\alpha$-stable process $S$ is non-degenerate and $b \in L_{loc}^{\infty}(\mathbb{R}_{+};L^{\infty}(\mathbb{R}^{d}))+ L_{loc}^{q}(\mathbb{R}_{+};L^{p}(\mathbb{R}^{d}))$ for some $p,q>0$ with $d/ p+\alpha/q <\alpha-1$, then the above SDE has a unique weak solution for every starting point $(s,x)\in \hs$.
\end{abstract}

\maketitle

%

\section{Introduction}
In this paper, we study the following type of stochastic differential equation
\begin{equation} \label{WUSeqsect1}
\begin{cases}dX_{t}=dS_{t}+b(s+t, X_{t})dt, & \ t\ge 0,\\
   \   X_{0}=x,
    \end{cases}
\end{equation}
where $(s,x)\in \hs$ is the initial starting point, $b: \hs \to \Rd$ is measurable, and $S=(S_{t})_{t \ge 0}$ is a $d$-dimensional $\alpha$-stable process. The equation (\ref{WUSeqsect1}) is a shorthand way of writing
\begin{equation} \label{WUSeqsect115}
X_{t}=x+S_{t}+\int_0^tb(s+u, X_{u})du,  \quad t\ge 0.
\end{equation}
Since the drift $b$ is not assumed to be bounded, solutions of (\ref{WUSeqsect1}) are supposed to fulfill the integrability conditions
\begin{equation}\label{WUSeqsect12}
\int_0^t|b(s+u,X_u)|du<\infty \quad \mbox{a.s.}, \quad \forall  t \ge 0,
\end{equation}
such that the integral in (\ref{WUSeqsect115}) makes sense.

The classical results tell us, that if $b$ is of linear growth and globally Lipschitz in the space variable, then a unique strong solution to (\ref{WUSeqsect1}) exists. However, it turns out that the Lipschitz-continuity on the drift is far from being necessary; this is well demonstrated in the special case where $\alpha=2$, that is, when $S$ is a $d$-dimensional Brownian motion. Indeed, there is an extensive literature devoted to the study of Brownian motion (or more generally, diffusions) with singular drift, see, e.g., \cite{MR0336813,MR532447,MR1104660,MR1246978,MR1964949,MR2117951,MR2593276,MR2820071}, and many others. In particular, it was shown in \cite{MR2117951} that if $S$ is a Brownian motion and there exist $p,q>0$ with $d/ p+2/q <1$ such that $b\in L_{loc}^{q}(\mathbb{R}_{+};L^{p}(\mathbb{R}^{d}))$, namely
\begin{equation}\label{WUSeqsect105}
\int_0^T\Big(\int_{\Rd}|b(t,x)|^{p}dx\Big)^{q/p}dt<\infty, \quad \forall  T >0,
\end{equation}
then strong existence and uniqueness hold for (\ref{WUSeqsect1}). Regarding weak solutions to (\ref{WUSeqsect1}) in the Brownian case, the condition (\ref{WUSeqsect105}) can be relaxed, see, e.g., \cite{MR1964949,jin2009stochastic}, where weak existence and uniqueness for (\ref{WUSeqsect1}) (in the case $\alpha=2$) were shown for some Kato-class drifts.

There is now a growing interest to study (\ref{WUSeqsect1}) for the case where $\alpha \in (0,2)$. Earlier works in this direction include \cite{MR736974,MR1341116}, which primarily concentrated on weak solutions, or equivalently, solutions to the corresponding martingale-problem. Recently, weak existence and uniqueness of rotationally symmetric $\alpha$-stable ($1<\alpha<2$) processes with (time-independent) singular drift belonging to the Kato-class $\CK_{d,\alpha-1}$ were obtained in \cite{chen2013uniqueness,MR3192504}. Compared to weak solutions, one needs generally more regularity on the drift to obtain  strong solutions to (\ref{WUSeqsect1}), as seen in the diffusion case; this is also the case when $\alpha \in (0,2)$. Priola \cite{MR2945756} proved that if the stable process $S$ is symmetric, non-degenerate and with index $\alpha \in (1,2)$, and $b$ is time-independent and belongs to $ C^{\beta}_b(\Rd)$ with $\beta>1-\alpha/2$, then pathwise uniqueness holds for (\ref{WUSeqsect1}). Afterwards, similar results were obtained by Zhang \cite{MR3127913} where it was shown that if $S$ is as in \cite{MR2945756}, $b$ is locally bounded and there exist some $\beta \in (1-\alpha/2,1)$ and $p>2d/\alpha$ such that for any $T,R >0$,
\[
\sup_{t\in [0,T]} \int_{\{|x|\le R\}}\int_{\{|y|\le R\}}\frac{|b(t,x)-b(t,y)|^p}{|x-y|^{d+\beta p}}dxdy<\infty,
\]
then  a unique strong solution to (\ref{WUSeqsect1}) exists. As an intermediate step to obtain the main result, Zhang \cite{MR3127913} also obtained the following result  on weak existence: if the stable process $S$ is symmetric, non-degenerate and with index $\alpha \in (1,2)$, and there exist $p,q>0$ such that
\begin{equation}\label{WUSeqsect122}
d/ p+\alpha/q <\alpha-1 \quad \mbox{and} \quad b\in L_{loc}^{\infty}(\mathbb{R}_{+};L^{\infty}(\mathbb{R}^{d}))+ L_{loc}^{q}(\mathbb{R}_{+};L^{p}(\mathbb{R}^{d})),
\end{equation}
then weak solutions to (\ref{WUSeqsect1}) exist. Very recently, the results of \cite{MR2945756} have been extended in \cite{chen2015arxiv1501} to a larger class of L\'evy processes, including the rotationally symmetric $\alpha$-stable process with index $0<\alpha\le 1$. For SDEs with irregular drift and driven by other types of L\'evy noises, see also \cite{MR3177338,MR2359059}.

In this paper, we study weak solutions to (\ref{WUSeqsect1}) for the case $1<\alpha<2$. We are mainly interested in the weak uniqueness problem. Under mild assumptions on the stable process, we will prove that the condition (\ref{WUSeqsect122}) on $b$ implies weak uniqueness for (\ref{WUSeqsect1}). More precisely, the main result of the present paper is as follows:

\begin{theorem}\label{WUSmainthm}Let $d\ge2$ and $1<\alpha<2$. Assume that the $\alpha$-stable process $S$ is non-degenerate, that is, Assumption \ref{WUSass21} (see Section 2) is satisfied.  Assume that $b$ is such that (\ref{WUSeqsect122}) holds. Then the SDE (\ref{WUSeqsect1}) has a unique weak solution for every starting point $(s,x)\in \hs$.
\end{theorem}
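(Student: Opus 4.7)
Existence follows by an approximation procedure (smoothing $b$, solving the SDE classically, and extracting a tight limit using the uniform bounds developed below), so the new content is weak uniqueness. I would prove uniqueness via the martingale-problem reformulation and a PDE/resolvent argument. Let $\CL$ denote the infinitesimal generator of the non-degenerate $\alpha$-stable process $S$. Any weak solution $X$ of (\ref{WUSeqsect1}) solves the martingale problem for the time-inhomogeneous operator
\[
L_t f(x) = \CL f(x) + b(s+t,x)\cdot \nabla f(x),
\]
and by standard measure-theoretic arguments (monotone class plus Markov-selection) it suffices to show that, for sufficiently rich test functions $f$, the resolvent value $\BE^{s,x}\int_0^\infty e^{-\lambda t} f(s+t, X_t)\, dt$ agrees for every weak solution.

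A first ingredient is a Krylov-type a priori estimate: for every weak solution $X$ starting from $(s,x)$ and every $f \in L^q_{loc}(\BR_+;L^p(\Rd))$,
\[
\BE\int_0^T |f(s+u, X_u)|\, du \le C(T)\,\|f\|_{L^q(0,T;L^p(\Rd))}.
\]
This follows from the two-sided heat-kernel bounds available for the non-degenerate stable process under Assumption \ref{WUSass21}, together with an absorption/iteration that uses the strict subcriticality $d/p + \alpha/q < \alpha - 1$ to dominate the drift contribution. The estimate legitimises the integrability requirement (\ref{WUSeqsect12}), supplies the tightness for the approximation step in the existence part, and enables application of a generalised It\^o formula to functions that are not classically $C^{1,2}$.

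The heart of the argument is the PDE step. For $f$ bounded and $\lambda$ sufficiently large, I would solve the backward Kolmogorov-type equation
\[
\lambda u - \partial_t u - \CL u - b\cdot \nabla u = f \qquad \text{on } \hs,
\]
obtaining a bounded solution $u$ with bounded gradient satisfying a quantitative estimate of the form
\[
\|u\|_\infty + \|\nabla u\|_\infty \le C\lambda^{-\gamma}\bigl(\|f\|_\infty + \|f\|_{L^q_t L^p_x}\bigr),\qquad \gamma>0.
\]
The approach is to solve the unperturbed equation $\lambda v - \partial_t v - \CL v = g$ by Duhamel's formula against the $\alpha$-stable semigroup, and then treat $b\cdot \nabla$ as a perturbation via a contraction-mapping argument in a suitable weighted Bessel-potential or parabolic-Sobolev space. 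The scaling gap $\alpha - 1 - (d/p + \alpha/q) > 0$ is precisely what the heat-kernel smoothing needs to gain strictly more than one spatial derivative over the integrability of $b$, so that the contraction closes for all $\lambda$ large enough.

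Given such a $u$, the generalised It\^o formula applied to $e^{-\lambda t} u(s+t, X_t)$ (justified by the Krylov estimate above) yields
\[
u(s,x) = \BE\int_0^\infty e^{-\lambda t} f(s+t, X_t)\, dt,
\]
which depends only on $(s,x)$ and not on the particular weak solution. Varying $f$ over a dense countable subfamily of $C_c^\infty(\hs)$ identifies the one-dimensional marginals of $X$ under any two weak solutions, and the full law then follows by a standard concatenation/Markov argument. I expect the main technical obstacle to be the PDE step: establishing the bounded-gradient estimate for the resolvent equation uniformly in $b$, using only the heat-kernel information afforded by the mere non-degeneracy hypothesis (Assumption \ref{WUSass21}), which in the possibly non-symmetric case is appreciably more delicate than the symmetric stable setup treated in \cite{MR3127913}, and additionally requires handling the $L^\infty_{t,x}$ and $L^q_t L^p_x$ components of $b$ simultaneously.
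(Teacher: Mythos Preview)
Your overall strategy---resolvent perturbation, solving the unperturbed equation by Duhamel, treating $b\cdot\nabla$ as a contraction for large $\lambda$, and identifying the resolvent of any weak solution---is essentially the paper's. The Neumann series $G^{\lambda}g=\sum_{k\ge0}R^{\lambda}(BR^{\lambda})^{k}g$ that the paper constructs is exactly the fixed point of your contraction map, and the key gradient estimate $\|\nabla R^{\lambda}(BR^{\lambda})^{k}g\|\le(\kappa_{\lambda})^{k}\|\nabla R^{\lambda}g\|$ with $\kappa_{\lambda}<1$ is the contraction condition you describe.

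There is, however, a genuine gap in your justification of the Krylov estimate. You assert that it ``follows from the two-sided heat-kernel bounds available for the non-degenerate stable process under Assumption~\ref{WUSass21}''; but for merely non-degenerate (possibly non-symmetric) $\alpha$-stable processes such two-sided bounds are \emph{not} available---the paper stresses this explicitly, citing \cite{MR2286060}. The paper circumvents the missing pointwise bounds by using only the scaling identity~(\ref{WUSsect315}) and the resulting $L^{p^*}$ control on $p_t$ and $\nabla p_t$ (Proposition~\ref{WUSprop32}, Lemma~\ref{WUSlemma34}). More importantly, the paper does \emph{not} establish an a~priori Krylov estimate valid for every weak solution, and hence does not rely on a generalised It\^o formula for arbitrary solutions as you do. Instead, for uniqueness it introduces stopping times $\tau_n=n\wedge\inf\{t:\int_s^t|b(u,X_u)|du>n\}$, concatenates the arbitrary solution up to $\tau_n$ with the already-constructed good solution $\mathbf{P}^{\tau_n,X_{\tau_n}}$, and shows that the concatenated measure $\mathbf{Q}_n^{s,x}$ has finite $\int e^{-\lambda t}|b|$-moment; only for this concatenated measure does one iterate the resolvent identity to obtain the Neumann series. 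This device, borrowed from \cite{MR1964949,MR2190038}, is what lets the argument run without the a~priori bound you invoke, and is the step you would need to supply or replace.
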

\noindent

We remark that the stable process $S$ considered in Theorem \ref{WUSmainthm} is not necessarily symmetric. Therefore, Theorem \ref{WUSmainthm} also extends the above mentioned result of \cite{MR3127913} on weak existence for (\ref{WUSeqsect1}).

We now briefly discuss our strategy to prove Theorem \ref{WUSmainthm}. Essentially, our proof of Theorem \ref{WUSmainthm} is based on some perturbation arguments. Under the assumptions of Theorem \ref{WUSmainthm}, we will see that the time-space resolvent of the solution $X$ to (\ref{WUSeqsect1}) can be explicitly expressed in terms of a series, of which the main term is the time-space resolvent of $S$. This enables us to obtain  weak uniqueness for (\ref{WUSeqsect1}). On the other hand, some estimates on the time-space resolvent of $X$ can be established and used as substitutes of the Krylov-estimates obtained in \cite{MR3127913}; as a consequence, we can adapt the proof of weak existence in \cite{MR3127913}  to make it work also  for our case. We would like to point out that the perturbation on the resolvent of $S$ that we will do   in this paper depends mainly on the scaling property of the heat kernel of $S$, rather than exact estimates of that. As shown in \cite{MR2286060}, sharp heat kernel estimates are actually not available in our case, since $S$ is merely assumed to be non-degenerate. Therefore, we can not carry out the same perturbation on the heat kernel of $S$ as done in \cite{chen2013uniqueness,MR3192504}, where a rotationally symmetric $\alpha$-stable process is considered for which sharp heat kernel estimates are known.

The rest of the paper is organized as follows. In Section 2 we recall some basic facts on $\alpha$-stable processes and the definition of the martingale problem for non-local generators. In Section 3 we establish some estimates on the time-space resolvent of non-degenerate $\alpha$-stable processes and obtain a solvability result on the corresponding resolvent equation. In Section 4 we prove the local existence and uniqueness of weak solutions to (\ref{WUSeqsect1}), under slightly stronger assumptions than those in Theorem \ref{WUSmainthm}. Finally, we prove Theorem \ref{WUSmainthm} in Section 5.

\section{Preliminaries}

Throughout this paper, we assume that dimension $d\ge 2$. The inner product of $x$ and $y$ in $\mathbb{R}^{d}$ is written as $x\cdot y$. We use $|v|$ to denote the Euclidean norm of a vector $v\in \BR^m$, $m\in \BN$.  For a bounded function $g:\hs\to \BR^m$ we write $\|g\|:=\sup_{(s,x) \in \hs} |g(s,x)|$. Let $\mathbb{S}^{d-1}:=\{x \in \mathbb{R}^{d}:|x|=1\}$ be the unitary sphere.

Let $\alpha \in (1,2)$ be fixed throughout this paper. A $d$-dimensional $\alpha$-stable process $S=(S_{t})_{t\geq 0}$ is a L\'{e}vy process with a particular form of characteristic exponent $\psi$, namely
\begin{center}
$\bfE\big[e^{iS_{t}\cdot u} \big ]=e^{-t\psi(u)},\quad  u\in\mathbb{R}^{d}$,
\begin{equation}\label{WUSeqsect21}
 \psi(u)=-\int_{\mathbb{R}^{d}\setminus\{0\}}\Big(e^{iu\cdot y}-1-iu\cdot y\Big)\nu(dy)-iu\cdot \gamma,
\end{equation}
\end{center}
where $\gamma \in \Rd$ and the L\'{e}vy measure $\nu$ is given by

\[\nu(B)=\int_{\mathbb{S}^{d-1}}\mu(d\xi)\int_{0}^{\infty}\mathbf{1}_{B}(r\xi)\frac{dr}{r^{1+\alpha}},\quad\ B\in\mathcal{B}(\mathbb{R}^{d}). \]
Here, $\gamma$ is called the center of $S$ and $\gamma=\bfE[S_{1}]$; the measure $\mu$ is a finite measure on the unitary sphere $\mathbb{S}^{d-1}$ and is called the spectral measure of the $\alpha$-stable process $S$.
It's worth noting that the first term on the right-hand side of (\ref{WUSeqsect21}) is a homogeneous function (with variable $u$) of index $\alpha$. As a consequence,
\begin{equation}\label{WUSeqsect215}
\psi(\rho u)+i(\rho u\cdot \gamma)= \rho^{\alpha}(\psi( u)+i( u\cdot \gamma)), \quad \forall \rho>0.
\end{equation}

Throughout this paper, we assume $S$ to be non-degenerate, that is, the spectral measure $\mu$ of $S$ satisfies the following condition.

\begin{assumption}\label{WUSass21}The support of $\mu$ is not contained in a proper linear subspace of $\mathbb{R}^{d}$.
\end{assumption}

The infinitesimal generator $A$ of the process $S$ is given by

\begin{equation}\label{WUSgeneratorA}
Af(x)=\int_{\mathbb{R}^{d}\setminus\{0\}}\big(f(x+z)-f(x)- z\cdot\nabla f(x)\big)\nu(dz)+\sum_{i=1}^d\gamma_i \partial_{x_i}f(x),\quad  f\in C_{b}^{2}(\mathbb{R}^{d}),
\end{equation}
where $C_{b}^{2}(\mathbb{R}^{d})$ denotes the class of $C^{2}$ functions such that the function and its first and second order partial derivatives are bounded. Note that $C_{b}^{2}(\mathbb{R}^{d})$ is a Banach space endowed with the norm
\[
\|f\|_{C_{b}^{2}(\mathbb{R}^{d})}:=\|f\|+\sum_{i=1}^d \|\partial_{i}f\|+\sum_{i,j=1}^d \|\partial^2_{ij}f\|, \quad f \in C_{b}^{2}(\mathbb{R}^{d}),
\]
where $\partial_{i}f(x):=\partial_{x_i}f(x)$ and $\partial^2_{ij}f(x):=\partial^2_{x_ix_j}f(x)$ for $x \in \Rd$. For a function $f(x,y)$ with two variables $x,y \in \Rd$, we also use the notation $A_xf(x,y)$ to indicate that $A$ is operating on the function $f(\cdot,y)$ with $y$ being considered as fixed.

Let
\begin{equation}\label{WUSgeneratorlt}
L_{t}:=A+b(t,\cdot)\cdot\nabla,\end{equation}
where $\nabla$ is the gradient operator with respect to the spatial variable.

Let $D=D\big([0,\infty)\big)$, the set of paths that are right continuous with left limits, endowed with the Skorokhod topology. Set $X_{t}(\omega)=\omega(t)$ for $\omega\in\Omega$ and let $\mathcal{D}=\sigma(X_{t}: 0\le t<\infty)$ and $\mathcal{F}_{t}:=\sigma(X_{r}:0\le r\le t)$. A probability measure $\mathbf{P}$ on $(D,\mathcal{D})$ is called a solution to the martingale problem for $L_{t}$ starting from $(s,x)$, if
\begin{equation}\label{WUSwsintegrcondi}
    \mathbf{P}(X_{t}=x,\ \forall t \le s)=1, \quad \mathbf{P}\Big(\int_s^t|b(u,X_u)|du<\infty, \ \forall t \ge s\Big)=1, \end{equation}
 and under the measure $\mathbf{P}$,
\begin{equation}\label{WUSeq2defimp}
    f(X_{t})-\int^{t}_{s}L_{u}f(X_{u})du
\end{equation}
is an $\mathcal{F}_{t}$-martingale after time $s$ for all $f \in C^{2}_{b}(\mathbb{R}^{d})$.

\section{Some Analytical Results}

We first recall that $\psi$ is the characteristic exponent of the stable process $(S_{t})_{t\geq 0}$. According to Assumption \ref{WUSass21} and \cite[Prop.~24.20]{MR1739520}, there exists some constant $c>0$ such that
\begin{equation}\label{WUSsect31}
\big| \bfE[e^{iS_{t}\cdot u}]\big|=\big|e^{-t\psi(u)}\big|\le e^{-ct|u|^{\alpha}}, \quad u \in \Rd.
\end{equation}
By the inversion formula of Fourier transform, the law of $S_{t}$ has a density $p_{t}\in L^{1}(\mathbb{R}^{d})\cap C_{b}(\mathbb{R}^{d})$ that is given by
\begin{equation}\label{WUSsect312}
p_{t}(x)=\frac{1}{(2\pi)^{d}}\int_{\mathbb{R}^{d}}e^{-iu\cdot x}e^{-t\psi(u)}du, \quad x\in\mathbb{R}^{d},\ t>0.
\end{equation}
According to \cite[p.~2856,~(2.3)]{MR2286060}, we have the following scaling property for $p_t$:
\begin{equation}\label{WUSsect315}
p_{t}(x)=t^{-d/\alpha}p_1(t^{-1/\alpha}x+(1-t^{1-1/\alpha})\gamma), \quad x\in\mathbb{R}^{d},\ t>0.
\end{equation}

The following result is a slight extension of \cite[Lemma~3.1]{MR2945756}.

\begin{lemma}\label{WUScoro31}Let $p\geq 1$ be arbitrary. Then for each $t>0$, the density $p_{t}$ of $S_t$ and all its derivatives $D^kp_{t}$ belong to  $ C_b^{\infty}(\mathbb{R}^{d}) \cap L^{p}(\mathbb{R}^{d})$, where $k=(k_1,\cdots,k_d)$ is a multi-index with $k_i\in\BZ_+, \ i=1,\cdots,d, $ and
\[
D^k:=\frac{\partial^{|k|}}{(\partial x_1)^{k_1}\cdots (\partial x_d)^{k_d}} \quad \mbox{with} \quad |k|=k_1+\cdots+k_d .
\]
\end{lemma}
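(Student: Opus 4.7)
The plan is to combine Fourier inversion, the scaling identity (\ref{WUSsect315}), and the Gaussian-type bound (\ref{WUSsect31}) on the characteristic function.

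First I would reduce to $t=1$. Differentiating (\ref{WUSsect315}) and a change of variables show that $\|D^k p_t\|_{L^p}$ equals a positive power of $t$ times $\|D^k p_1\|_{L^p}$, so it suffices to prove $D^k p_1 \in C_b^\infty(\mathbb{R}^d) \cap L^p(\mathbb{R}^d)$ for every multi-index $k$ and every $p \geq 1$. For the $C_b^\infty$ part and the $L^\infty$ bound, the estimate (\ref{WUSsect31}) gives $u^m e^{-\psi(u)} \in L^1(\mathbb{R}^d)$ for every multi-index $m$, so one can differentiate (\ref{WUSsect312}) under the integral sign to obtain
\[
D^k p_1(x) = \frac{1}{(2\pi)^d}\int_{\mathbb{R}^d}(-iu)^k e^{-iu\cdot x} e^{-\psi(u)}\,du,
\]
which yields $\|D^k p_1\|_\infty \leq C_k$; applied to all larger multi-indices this shows $D^k p_1 \in C_b^\infty$. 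By Hausdorff--Young, the same Fourier representation also gives $D^k p_1 \in L^q$ for every $q \in [2, \infty]$, since $(iu)^k e^{-\psi(u)} \in L^r$ for all $r \in [1, 2]$. When $k = 0$, $p_1 \in L^1$ holds trivially because $p_1$ is a probability density, and then interpolation finishes the case $k=0$.

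It remains to prove $D^k p_1 \in L^1$ for $|k| \geq 1$, after which the full range $L^p$, $p \in [1, \infty]$, follows by interpolation with $L^\infty$. For this I would establish polynomial decay $|D^k p_1(x)| \leq C_k (1 + |x|)^{-2N}$ for some integer $N > d/2$. Using $|x|^{2N} = \sum_{|\beta|=N}\binom{N}{\beta} x^{2\beta}$ together with the Fourier-side estimate
\[
\|x^{2\beta} D^k p_1\|_\infty \leq (2\pi)^{-d} \bigl\|\partial_u^{2\beta}[(iu)^k e^{-\psi(u)}]\bigr\|_{L^1(\mathbb{R}^d)},
\]
this reduces to showing that the right-hand side is finite. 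Away from $u = 0$, the rapid decay $e^{-c|u|^\alpha}$ from (\ref{WUSsect31}) absorbs all polynomials and the tail contribution is harmless. The delicate point, and the main obstacle, is the behaviour near $u = 0$, where $\psi$ is only $C^{\lfloor\alpha\rfloor}$. Here I would exploit the $\alpha$-homogeneity of $\psi(u) + iu\cdot\gamma$ coming from (\ref{WUSeqsect215}) to derive $|\partial_u^m \psi(u)| \leq C_m |u|^{\alpha - |m|}$ for $u \neq 0$ and $|m| \geq 2$, and then combine this with the Leibniz and Fa\`a di Bruno formulas to get $|\partial_u^{2\beta}[(iu)^k e^{-\psi(u)}]| \leq C |u|^{|k| + \alpha - 2|\beta|}$ on a neighbourhood of the origin. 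This is locally integrable precisely when $|\beta| < (d + \alpha + |k|)/2$. Since for $|k|\geq 1$ the interval $(d/2,\,(d + \alpha + |k|)/2)$ has length $(\alpha + |k|)/2 > 1$ and therefore contains an integer $N$, we obtain the required decay of $D^k p_1$, which completes the argument.
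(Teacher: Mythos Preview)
Your reduction to $t=1$, the $C_b^\infty$ part via differentiation under the integral, and the $L^p$ for $p\ge 2$ via Hausdorff--Young are all fine. The gap is in the $L^1$ step for $|k|\ge 1$: the derivative bound $|\partial_u^m\psi(u)|\le C_m|u|^{\alpha-|m|}$ for $|m|\ge 2$ is false in general under Assumption~\ref{WUSass21}. Homogeneity of $\psi_0(u)=\psi(u)+iu\cdot\gamma$ would give that bound only if $\psi_0$ were smooth on $\mathbb{R}^d\setminus\{0\}$, i.e.\ smooth on the unit sphere, and this need not hold. Take $\mu=\sum_{j=1}^d(\delta_{e_j}+\delta_{-e_j})$; then $\psi_0(u)=c\sum_j|u_j|^\alpha$, which is nondegenerate but has $\partial_{u_1}^2\psi_0(u)=c\alpha(\alpha-1)|u_1|^{\alpha-2}$, blowing up on the entire hyperplane $\{u_1=0\}$, not just at the origin. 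So the singular set is not confined to a neighbourhood of $0$, and your Fa\`a di Bruno estimate for $\partial_u^{2\beta}[(iu)^k e^{-\psi(u)}]$ cannot be carried out. (You even note that $\psi$ is only $C^{\lfloor\alpha\rfloor}=C^1$; the point is that this limited regularity persists away from the origin.)

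The paper avoids this obstacle by splitting $\psi=\psi_1+\psi_2$ according to small and large jumps. The small-jump exponent $\psi_1$ is genuinely $C^\infty$ on all of $\mathbb{R}^d$ because the truncated L\'evy measure has all moments, and $e^{-\psi_1}$ inherits the Gaussian-type decay from (\ref{WUSsect31}); hence $e^{-\psi_1}\in\mathcal{S}(\mathbb{R}^d)$ and its inverse Fourier transform $f$ lies in $\mathcal{S}(\mathbb{R}^d)\subset L^p$. The large-jump factor $e^{-\psi_2}$ is merely bounded, but it is the characteristic function of a probability measure $m$, so $p_1=f*m$ and Young's inequality gives $p_1\in L^p$ (and likewise $D^k p_1=(D^k f)*m\in L^p$). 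This decomposition is precisely what absorbs the lack of smoothness of $\psi$ that your direct Fourier-decay argument cannot handle.
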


\begin{proof} We follow the proof of \cite[Lemma 3.1]{MR2945756}. We will only prove the assertion for $p_{t}$, since the cases for the derivatives $D^{k}p_{t}$ are similar. By the scaling property (\ref{WUSsect315}), it suffices to consider $t=1.$ As shown in the proof of \cite[Lemma 3.1]{MR2945756}, the characteristic exponent $\psi$ can be written as the sum of $\psi_{1}$ and $\psi_{2}$, where
\[
\psi_{1}(u)= -\int_{\{0<|y|\le 1\}}\Big(e^{iu\cdot y}-1-iu\cdot y\Big)\nu(dy)  , \quad \psi_{2}=\psi-\psi_1.
\]
It is easy to see that $\exp(-\psi_2)$ is bounded and is the characteristic function of an infinitely divisible probability measure $m$ on $\mathbb{R}^{d}$. It follows from (\ref{WUSsect31}) that
\[
\big|e^{-\psi_1(u)}\big|\le c_1e^{-c|u|^{\alpha}}, \quad u \in \Rd,
\]
for some constant $c_1>0$. We can easily check that $\psi_{1}\in C^{\infty}(\Rd)$ and that $\exp(-\psi_{1})$ belongs to the Schwartz space $\mathcal{S}(\mathbb{R}^{d})$. Since the Fourier transform is a one-to-one map of $\mathcal{S}(\mathbb{R}^{d})$ onto itself, we can find $f\in \mathcal{S}(\mathbb{R}^{d})$ with $ \hat{f}$=$\exp(-\psi_{1})$, where $ \hat{f}$ denotes the Fourier transform of $f$. In particular, we have $f\in L^{p}(\mathbb{R}^{d})$ for all $p \ge 1$. Let $ f*m$ be the convolution of $f$ and $\gamma.$ We have
$$
\ \widehat{f*m}=\hat{f}\hat{m}=e^{-\psi_{1}-\psi_{2}}=e^{-\psi}=\ \hat{p}_1,
$$
which implies $ p_{1}=f*m$. Thus $p_1 \in C_b^{\infty}(\mathbb{R}^{d})$. By Young's inequality, we get $p_{1}\in L^{p}(\mathbb{R}^{d}).$
\end{proof}

Recall that the operator $A$ defined in (\ref{WUSgeneratorA}) is the  infinitesimal generator of the process $S$. The following corollary is straightforward.
\begin{corollary}\label{WUScoro315}For each $t>0$, we have
\begin{equation}\label{WUSeqcoro3151}
Ap_t(x)=-\frac{1}{(2\pi)^d}\int_{\Rd}\psi(-u)e^{-t\psi(u)}e^{-iu\cdot x}du, \quad x \in \Rd.
\end{equation}
\end{corollary}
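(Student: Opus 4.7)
The plan is to interchange the operator $A$ with the Fourier inversion integral defining $p_t$, and then to compute $A$ applied to a plane wave. By Lemma \ref{WUScoro31}, $p_t \in C_b^2(\Rd)$, so $Ap_t(x)$ is well-defined via (\ref{WUSgeneratorA}), and $p_t$ admits the representation (\ref{WUSsect312}). The key pointwise computation is that for fixed $u \in \Rd$ the function $x \mapsto e^{-iu\cdot x}$ lies in $C_b^2(\Rd)$ and
\begin{equation*}
A\bigl(e^{-iu\cdot\,\cdot}\bigr)(x) = \int_{\Rd\setminus\{0\}}\bigl(e^{-iu\cdot(x+z)}-e^{-iu\cdot x}+iu\cdot z\, e^{-iu\cdot x}\bigr)\nu(dz) - iu\cdot\gamma\, e^{-iu\cdot x} = -\psi(-u)e^{-iu\cdot x},
\end{equation*}
where the last equality uses the definition (\ref{WUSeqsect21}) of $\psi$ with $u$ replaced by $-u$. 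Granting for the moment that $A$ may be exchanged with the integral in (\ref{WUSsect312}), the claim follows immediately.

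To justify the interchange I would apply Fubini. Writing out $Ap_t(x)$ via (\ref{WUSgeneratorA}) and substituting the Fourier representation (\ref{WUSsect312}) for $p_t(x+z)$, $p_t(x)$ and $\nabla p_t(x)$, one obtains a double integral in $(u,z)$. The integrand in $u$ is uniformly bounded in $z$ after the spatial cancellations, because the $C_b^2$-bound of $e^{-iu\cdot\,\cdot}$ gives the standard estimate
\begin{equation*}
|e^{-iu\cdot(x+z)}-e^{-iu\cdot x}+iu\cdot z\, e^{-iu\cdot x}| \le C\bigl(|u|^2|z|^2\wedge (|u||z|+1)\bigr),
\end{equation*}
so that, integrated against $\nu(dz)$, one gets a bound of the form $C(1+|u|^\alpha)$ uniformly in $x$, which is exactly the growth rate of $|\psi(-u)|$. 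Combined with the Gaussian-type decay $|e^{-t\psi(u)}|\le e^{-ct|u|^\alpha}$ from (\ref{WUSsect31}), the double integrand is absolutely integrable in $(u,z)$, so Fubini applies and the order of integration may be swapped.

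The only genuinely delicate point is the $\nu$-integrability of the compensator term near the origin; one handles this by splitting $\nu$ into $\{|z|\le 1\}$ and $\{|z|>1\}$ and using the pointwise bound $|e^{-iu\cdot(x+z)}-e^{-iu\cdot x}+iu\cdot z\, e^{-iu\cdot x}|\le \tfrac12|u|^2|z|^2$ on the former and the trivial bound $|u||z|+2$ on the latter; both pieces are $\nu$-integrable because $\alpha\in(1,2)$ implies $\int(|z|^2\wedge |z|)\nu(dz)<\infty$. Once Fubini has been established, the computation of $Ae^{-iu\cdot x}=-\psi(-u)e^{-iu\cdot x}$ yields (\ref{WUSeqcoro3151}).
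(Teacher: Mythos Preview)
Your proposal is correct and follows essentially the same route as the paper: substitute the Fourier representation (\ref{WUSsect312}) into the definition (\ref{WUSgeneratorA}) of $A$, apply Fubini to swap the $u$- and $z$-integrals, and recognize the resulting $z$-integral as $-\psi(-u)$. The only difference is that you spell out the Fubini justification more explicitly than the paper does; one minor imprecision is that your crude splitting at $|z|=1$ actually yields a bound $C(1+|u|+|u|^2)$ rather than $C(1+|u|^\alpha)$, but this is harmless since $e^{-ct|u|^\alpha}$ dominates any polynomial.
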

\begin{proof}
According to Lemma \ref{WUScoro31}, $p_t \in C_b^{\infty}(\mathbb{R}^{d})$. Thus $Ap_t$ is well-defined. For $x \in \Rd$, we have
\begin{align*}
Ap_t(x)=& \int_{\mathbb{R}^{d}\setminus\{0\}}\big(p_t(x+z)-p_t(x)- z\cdot\nabla p_t(x)\big)\nu(dz)+\sum_{i=1}^d\gamma_i \partial_{x_i}p_t(x) \\
=& \int_{\mathbb{R}^{d}\setminus\{0\}}\frac{1}{(2\pi)^{d}}\int_{\mathbb{R}^{d}}\Big(e^{-iu\cdot (x+z)}-e^{-iu\cdot x}+z\cdot iu e^{-iu\cdot x}\Big)e^{-t\psi(u)}du\nu(dz) \\
& \quad +\frac{1}{(2\pi)^{d}}\int_{\mathbb{R}^{d}}(-iu \cdot \gamma  )e^{-iu\cdot x}e^{-t\psi(u)}du.
\end{align*}
By Fubini's theorem,
\begin{align*}
Ap_t(x)=& \frac{1}{(2\pi)^{d}}\int_{\mathbb{R}^{d}}\Big(\int_{\mathbb{R}^{d}\setminus\{0\}}\big(e^{-iu\cdot (x+z)}-e^{-iu\cdot x}+z\cdot iu e^{-iu\cdot x}\big)e^{-t\psi(u)}\nu(dz)\Big)du \\
& \quad +\frac{1}{(2\pi)^{d}}\int_{\mathbb{R}^{d}}(-iu \cdot \gamma  )e^{-iu\cdot x}e^{-t\psi(u)}du \\
=& \frac{1}{(2\pi)^{d}}\int_{\mathbb{R}^{d}}e^{-iu\cdot x}e^{-t\psi(u)}\int_{\mathbb{R}^{d}\setminus\{0\}}\big(e^{-iu\cdot z}-1-iz\cdot(-u) \big)\nu(dz)du \\
& \quad +\frac{1}{(2\pi)^{d}}\int_{\mathbb{R}^{d}}(-iu \cdot \gamma  )e^{-iu\cdot x}e^{-t\psi(u)}du \\
=&\frac{1}{(2\pi)^{d}}\int_{\mathbb{R}^{d}}e^{-iu\cdot x}(-\psi(-u))e^{-t\psi(u)}du.
\end{align*}
\end{proof}

\begin{remark}Let $y \in \Rd$ be fixed. Later in the proof of Proposition \ref{WUSprop31} we will need to calculate $A(p_t(y-\cdot))$. Proceeding as in the proof of Corollary \ref{WUScoro315}, we can easily verify that
\begin{equation}\label{WUSeqremark3155}
A_x\big((p_t(y-x)\big)=-\frac{1}{(2\pi)^d}\int_{\Rd}\psi(u)e^{-t\psi(u)}e^{-iu\cdot (y-x)}du, \quad x \in \Rd.
\end{equation}
\end{remark}

Next, we show that  the the right-hand side of (\ref{WUSeqremark3155}) is an integrable function with respect to the variable $y$.

\begin{lemma}\label{WUSlemma316}
Denote by $g(t,x,y)$ the right-hand side of (\ref{WUSeqremark3155}), namely,
\[
g(t,x,y):=-\frac{1}{(2\pi)^d}\int_{\Rd}\psi(u)e^{-t\psi(u)}e^{-iu\cdot (y-x)}du, \quad t>0, \ x,y \in \Rd.
\]
Then $\|g(t,x,\cdot)\|_{L^1(\Rd)}$ is finite and uniformly bounded for $(t,x) \in [\delta,\infty)\times \Rd$, where $\delta>0$ is an arbitrary constant.
\end{lemma}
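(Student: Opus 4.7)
The plan is to reduce the $L^1$-norm in question to a quantity depending only on $t$ and then use the scaling property (\ref{WUSsect315}) to bound it uniformly for $t\ge\delta$. By the change of variable $z=y-x$, the norm $\|g(t,x,\cdot)\|_{L^1(\BR^d)}=\int_{\BR^d}|g(t,0,z)|\,dz$ is independent of $x$. Differentiating under the Fourier integral in (\ref{WUSsect312}) --- justified because $|\psi(u)|$ grows polynomially while $|e^{-t\psi(u)}|\le e^{-ct|u|^{\alpha}}$ by (\ref{WUSsect31}) --- yields the identification
\[
\partial_t p_t(z)=-\frac{1}{(2\pi)^d}\int_{\BR^d}\psi(u)e^{-t\psi(u)}e^{-iu\cdot z}\,du=g(t,0,z),
\]
so it suffices to prove that $\|\partial_t p_t\|_{L^1(\BR^d)}$ is uniformly bounded for $t\ge\delta$.

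Setting $w(t,z):=t^{-1/\alpha}z+(1-t^{1-1/\alpha})\gamma$, so that $p_t(z)=t^{-d/\alpha}p_1(w(t,z))$ by (\ref{WUSsect315}), differentiation in $t$ gives
\[
\partial_t p_t(z)=-\frac{d}{\alpha t}\,p_t(z)+t^{-d/\alpha}\nabla p_1\bigl(w(t,z)\bigr)\cdot\partial_t w(t,z),
\]
and a short computation shows $\partial_t w(t,z)=-\frac{w(t,z)-\gamma}{\alpha t}-t^{-1/\alpha}\gamma$ once the $z$-dependence is expressed through $w$. Changing variable to $w=w(t,z)$ (so $dz=t^{d/\alpha}\,dw$) and applying the triangle inequality yields
\[
\|\partial_t p_t\|_{L^1(\BR^d)}\le\frac{d}{\alpha t}+\frac{1}{\alpha t}\int_{\BR^d}|w-\gamma|\,|\nabla p_1(w)|\,dw+t^{-1/\alpha}|\gamma|\,\|\nabla p_1\|_{L^1(\BR^d)}.
\]
For $t\ge\delta$ the prefactors are bounded, and Lemma \ref{WUScoro31} supplies $\|\nabla p_1\|_{L^1(\BR^d)}<\infty$, so the estimate is uniform in $(t,x)\in[\delta,\infty)\times\BR^d$ provided one can also control the weighted integral $\int_{\BR^d}|w|\,|\nabla p_1(w)|\,dw$.

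For this last point I would reuse the decomposition $p_1=f*m$ from the proof of Lemma \ref{WUScoro31}, with $f\in\mathcal{S}(\BR^d)$ Schwartz and $m$ the infinitely divisible probability measure with Fourier transform $e^{-\psi_2}$. Since $\nabla p_1=(\nabla f)*m$, Fubini combined with $|w|\le|w-y|+|y|$ yields
\[
\int_{\BR^d}|w|\,|\nabla p_1(w)|\,dw\le\int_{\BR^d}|v|\,|\nabla f(v)|\,dv+\|\nabla f\|_{L^1(\BR^d)}\int_{\BR^d}|y|\,m(dy).
\]
The two $\nabla f$-integrals are finite by Schwartz decay. For the last integral I would recognise $m$ as a deterministic translate of a compound Poisson distribution with L\'evy measure $\nu|_{\{|y|>1\}}$; the required splitting of $\psi_2$ into a compound-Poisson exponent and a drift is legitimate because $\alpha>1$ gives $\int_{|y|>1}|y|\,\nu(dy)<\infty$, and consequently $m$ has a finite first moment. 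This weighted estimate is the main obstacle, since Lemma \ref{WUScoro31} provides only unweighted $L^p$-bounds on the derivatives of $p_1$; the weight is recovered here by exploiting both the Schwartz-convolution structure and the integrability of large jumps of $S$, the latter being equivalent to the standing hypothesis $\alpha>1$.
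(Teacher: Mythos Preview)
Your proof is correct, but it follows a different route from the paper's.

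The paper performs the scaling directly in the Fourier integral (using the homogeneity relation (\ref{WUSeqsect215})), arriving at the bound
\[
\int_{\Rd}|g(t,x,y)|\,dy\le t^{-1}C_1+|t^{-1}-t^{-1/\alpha}|C_2,
\]
where $C_1=\|\mathcal{F}^{-1}(\psi e^{-\psi})\|_{L^1}$ and $C_2=\|\mathcal{F}^{-1}(\gamma\cdot u\,e^{-\psi})\|_{L^1}$. It then shows $C_1<\infty$ by the purely Fourier-side decomposition $\psi e^{-\psi}=\psi_1e^{-\psi_1}e^{-\psi_2}-e^{-\psi_1}(-\psi_{21})e^{-\psi_2}+\psi_{22}e^{-\psi_1}e^{-\psi_2}$, recognizing each summand as the product of a Schwartz function with the characteristic function of a finite measure; $C_2<\infty$ is handled similarly. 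No weighted $L^1$ estimate on $\nabla p_1$ is needed.

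Your approach instead identifies $g(t,0,\cdot)=\partial_t p_t$, scales in real space via (\ref{WUSsect315}), and reduces to the weighted bound $\int_{\Rd}|w|\,|\nabla p_1(w)|\,dw<\infty$. You then recover this weight through the convolution structure $p_1=f*m$ together with the finiteness of the first moment of $m$, which in turn rests on $\int_{\{|y|>1\}}|y|\,\nu(dy)<\infty$, valid precisely because $\alpha>1$. This moment argument is the genuine extra ingredient compared with the paper; it is correct, but note that it depends on the standing hypothesis $\alpha>1$ in a way the paper's Fourier decomposition does not. In exchange, your argument avoids the somewhat ad hoc algebraic splitting of $\psi e^{-\psi}$ and makes the connection to $\partial_t p_t$ explicit, which is conceptually clean.
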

\begin{proof}
Using (\ref{WUSeqsect215}) and a change of variables $u=t^{-1/\alpha}u'$ for the integral in the definition of $g(t,x,y)$, we obtain
\begin{align*}
g(t,x,y)=&-\frac{t^{-d/\alpha}}{(2\pi)^d}\int_{\Rd}\big(t^{-1}(\psi(u')+iu'\cdot \gamma)-it^{-1/\alpha}u'\cdot \gamma\big) \\
& \qquad \qquad \qquad \times e^{-(\psi(u')+iu'\cdot\gamma)+it^{1-1/\alpha}u'\cdot\gamma}e^{-it^{-1/\alpha}u'\cdot (y-x)}du' \\
=&-\frac{t^{-d/\alpha}}{(2\pi)^d}\int_{\Rd}\big(t^{-1}\psi(u')+iu'\cdot \gamma(t^{-1}-t^{-1/\alpha})\big) \\
& \qquad \qquad \qquad \times e^{-\psi(u')+iu'\cdot\gamma(t^{1-1/\alpha}-1)}e^{-it^{-1/\alpha}u'\cdot (y-x)}du' \\
=& -\frac{t^{-1-d/\alpha}}{(2\pi)^d}\int_{\Rd}\psi(u')e^{-\psi(u')}e^{-iu'\cdot t^{-1/\alpha}(y-x-\gamma t+\gamma t^{1/\alpha})}du'\\
&  \ \ -i\frac{t^{-1-d/\alpha}-t^{-(d+1)/\alpha}}{(2\pi)^d}\int_{\Rd}\gamma \cdot u'e^{-\psi(u')}e^{-iu'\cdot t^{-1/\alpha}(y-x-\gamma t+\gamma t^{1/\alpha})}du'.
\end{align*}
With another change of variables $y'=t^{-1/\alpha}(y-x-\gamma t+\gamma t^{1/\alpha})$, we get
\begin{align}
\int_{\Rd}|g(t,x,y)|dy\le & \frac{t^{-1}}{(2\pi)^d}\int_{\Rd}\Big|\int_{\Rd}\psi(u')e^{-\psi(u')}e^{-iu'\cdot y'}du'\Big|dy' \notag \\
\label{WUSeqlemma316} &\quad +\frac{|t^{-1}-t^{-1/\alpha}|}{(2\pi)^d}\int_{\Rd}\Big|\int_{\Rd}\gamma \cdot u'e^{-\psi(u')}e^{-iu'\cdot y'}du'\Big|dy'.
\end{align}
Let $\psi_1$ and $\psi_2$ be as in the proof of Lemma \ref{WUScoro31}. We can further write $\psi_2=\psi_{21}+\psi_{22}$, where
\[
 \psi_{21}(u)=-\int_{\{|y|> 1\}}e^{iu\cdot y}\nu(dy)
\]
and
\[
\psi_{22}(u)=\int_{\{|y|> 1\}}(1+iu\cdot y)\nu(dy)-iu\cdot \gamma.
\]
Then
\begin{align*}
 \psi e^{-\psi}
=& \psi_1 e^{-\psi_1}e^{-\psi_2}+\psi_2e^{-\psi_1}e^{-\psi_2} \\
=& \psi_1 e^{-\psi_1}e^{-\psi_2}-e^{-\psi_1}(-\psi_{21})e^{-\psi_2}+\psi_{22}e^{-\psi_1}e^{-\psi_2}.
\end{align*}
As shown in the proof of Lemma \ref{WUScoro31}, $\exp(-\psi_1) \in \CS(\Rd)$; similarly, we have $\psi_1 \exp(-\psi_1), \psi_{22} \exp(-\psi_1) \in \CS(\Rd)$. Noting that $-\psi_{21}$ and $e^{-\psi_2}$ are both characteristic functions of some finite measures on $\Rd$, we can argue as in the proof of Lemma \ref{WUScoro31} to conclude that
\[
\int_{\Rd}\Big|\int_{\Rd}\psi(u')e^{-\psi(u')}e^{-iu'\cdot y'}du'\Big|dy'<\infty.
\]
The finiteness of the integral appearing in the second term on the right-hand side of (\ref{WUSeqlemma316}) can be similarly proved. Now, the assertion follows from (\ref{WUSeqlemma316}).

\end{proof}

The following two lemmas will be used to obtain a solvability result about the parabolic resolvent equation for $A$; however, they are interesting in their own right.

\begin{lemma}\label{WUSlemma32}Let $(E,\CM, m)$ be a measure space and $f: \Rd\times E \to \BR$ be $\CB(\Rd)\otimes\CM$-measurable. Denote by $L^1(E,\CM,m)$ the space of all $\CM$-measurable functions on $E$ that are integrable with respect to the measure $m$. Suppose $f$ satisfies: \\
(i) For each $y \in E$, $f(\cdot,y) \in C^2_b(\Rd)$. Moreover, there exist $g_0, g_1,g_2 \in L^1(E,\CM, m)$ such that $|f(x,\cdot )| \le g_0$, $|\nabla_x f(x,\cdot )| \le g_1$ and $\sum_{i,j=1}^d|\partial^2_{x_ix_j} f(x,\cdot )| \le g_2$ for all $x \in \Rd$. \\
(ii) For each $x \in \Rd$, $f(x,\cdot) \in L^1(E,\CM, m)$. \\
Then
\[
A\Big(\int_E f(x,y)m(dy)\Big)=\int_E A_xf(x,y)m(dy).
\]
\end{lemma}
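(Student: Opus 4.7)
The plan is first to show that $F(x) := \int_E f(x,y)\,m(dy)$ belongs to $C_b^2(\Rd)$, so that $AF$ is defined, and then to justify interchanging $A$ with the $m$-integral by splitting the non-local part of $A$ at $\{|z|\le 1\}$ and $\{|z|>1\}$ and applying Fubini on each piece.

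For the first step I would use condition (i): $|f(\cdot,y)|\le g_0$, $|\nabla_x f(\cdot,y)|\le g_1$, and $\sum_{i,j}|\partial^2_{x_ix_j}f(\cdot,y)|\le g_2$, with $g_0,g_1,g_2\in L^1(E,\CM,m)$. Combined with the mean value theorem, these bounds dominate the difference quotients of $f(\cdot,y)$ uniformly in $x$, so repeated applications of dominated convergence show that $F\in C^2_b(\Rd)$ with
\[
\nabla F(x) = \int_E \nabla_x f(x,y)\,m(dy), \qquad \partial^2_{x_ix_j}F(x) = \int_E \partial^2_{x_ix_j}f(x,y)\,m(dy).
\]
In particular the drift contribution $\sum_i \gamma_i \partial_{x_i}F(x)$ in $AF(x)$ already equals $\int_E \sum_i \gamma_i \partial_{x_i}f(x,y)\,m(dy)$, and no further work is needed there.

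Next I would handle the L\'evy integral $\int_{\Rd\setminus\{0\}}[F(x+z)-F(x)-z\cdot\nabla F(x)]\nu(dz)$ by splitting at $|z|=1$. On the near region, Taylor's theorem and (i) give
\[
|f(x+z,y)-f(x,y)-z\cdot\nabla_x f(x,y)| \le \tfrac{1}{2}|z|^2 g_2(y),
\]
and since $\int_{0<|z|\le 1}|z|^2\nu(dz)<\infty$ (a generic property of L\'evy measures, and easily checked from the explicit form of $\nu$), together with $g_2\in L^1(E,\CM,m)$, the joint integrand is integrable on $E\times\{0<|z|\le 1\}$ and Fubini applies. On the far region the crude bound
\[
|f(x+z,y)-f(x,y)-z\cdot\nabla_x f(x,y)| \le 2g_0(y)+|z|g_1(y),
\]
together with $\int_{|z|>1}|z|^p\nu(dz) = \mu(\mathbb{S}^{d-1})\int_1^\infty r^{p-1-\alpha}\,dr<\infty$ for $p\in\{0,1\}$ (valid because $\alpha>1$), yields joint integrability on $E\times\{|z|>1\}$, so Fubini applies again. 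Recombining the near-jump contribution, the far-jump contribution, and the drift term produces the desired identity $AF(x)=\int_E A_x f(x,y)\,m(dy)$.

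The main obstacle is the singularity of $\nu$ at the origin: it forces the use of the second-order Taylor correction and hence of the uniform Hessian domination $g_2$ in (i); without this hypothesis there would be no jointly integrable dominator on the small-jump region. The condition $\alpha>1$ enters only in the large-jump part and is the sole place where the stable framework is essential; everything else is a routine verification of the hypotheses of Fubini.
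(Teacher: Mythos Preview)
Your argument is correct and follows essentially the same route as the paper: verify $F\in C_b^2(\Rd)$ via dominated convergence, then split the L\'evy integral at $|z|=1$ and apply Fubini on each piece using the Taylor bound with $g_2$ for small jumps and the crude bounds with $g_0$, $g_1$ for large jumps. The only cosmetic difference is that the paper first moves the term $\int_{\{|z|>1\}} z\,\nu(dz)$ into the drift, so that its large-jump piece $I_1$ contains no gradient correction and only $g_0$ is needed there; you instead keep the full compensator on $\{|z|>1\}$ and invoke $g_1$ together with $\int_{\{|z|>1\}}|z|\,\nu(dz)<\infty$, which uses $\alpha>1$ in the same way.
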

\begin{proof}
Let $h(x):=\int_E f(x,y)m(dy)$, $x\in\Rd$. By condition (i) and dominated convergence theorem, we have $h \in C^2_b(\Rd)$; in particular, $\nabla h(x)=\int_E \nabla_x f(x,y)m(dy)$. As a consequence, $Ah$ is well-defined.

For $x \in \Rd$, we have
\begin{align}
Ah(x)=& \gamma \cdot \nabla h(x)+ \int_{\mathbb{R}^{d}\setminus\{0\}}\big(h(x+z)-h(x)-z\cdot\nabla h(x)\big)\nu(dz) \notag\\
 =& \big(\gamma - \int_{\{|z|>1\}} z \nu(dz)\big)\cdot \nabla h(x)+ \int_{\{|z|>1\}}\big(h(x+z)-h(x)\big)\nu(dz)\notag \\
 & \qquad + \int_{\{0<|z|\le 1\}}\big(h(x+z)-h(x)-z\cdot\nabla h(x)\big)\nu(dz) \notag\\
\label{WUSeqlemma321}=& \big(\gamma - \int_{\{|z|>1\}} z \nu(dz)\big)\cdot \int_E \nabla_x f(x,y)m(dy)+ I_1+I_2,
\end{align}
where
\[
I_1:= \int_{\{|z|>1\}}\big(h(x+z)-h(x)\big)\nu(dz)
\]
and
\[
I_2:=\int_{\{0<|z|\le 1\}}\big(h(x+z)-h(x)-z\cdot\nabla h(x)\big)\nu(dz).
\]
Since $|f(x,\cdot )| \le g_0$ for all $x\in\Rd$, we can apply Fubini's theorem to obtain
\begin{align}
I_1=& \int_{\{|z|>1\}}\int_E \big(f(x+z,y)-f(x,y)\big)m(dy)\nu(dz) \notag \\
\label{WUSeqlemma322}=&\int_E\int_{\{|z|>1\}} \big(f(x+z,y)-f(x,y)\big)\nu(dz)m(dy).
\end{align}
Noting that $\sum_{i,j=1}^d|\partial^2_{x_ix_j} f(x,\cdot )| \le g_2$ for all $x \in \Rd$, we can find a constant $C>0$ such that
\[
|\nabla_x f(x+z,y)-\nabla_x f(x,y)| \le C g_2(y)|z|, \qquad x,z \in \Rd, \ y \in E.
\]
Therefore, for all $x,z \in \Rd, \ y \in E$,
\begin{align*}
|f(x+z,y)-f(x,y)-z \cdot \nabla_x f(x,y)| \le & \int_0^1 |\nabla_x f(x+r z,y)-\nabla_x f(x,y)||z|dr  \\
\le & C g_2(y)|z|^2.
\end{align*}
This allows us to use Fubini's theorem to obtain
\begin{align}
I_2=& \int_{\{0<|z|\le 1\}}\int_E \big (  f(x+z,y)-f(x,y)-z \cdot \nabla_x f(x,y)\big)m(dy)\nu(dz) \notag \\
\label{WUSeqlemma323}=&\int_E\int_{\{0<|z|\le 1\}} \big (  f(x+z,y)-f(x,y)-z \cdot \nabla_x f(x,y)\big)\nu(dz)m(dy).
\end{align}
Now, the assertion follows easily from (\ref{WUSeqlemma321}), (\ref{WUSeqlemma322}) and (\ref{WUSeqlemma323}).
\end{proof}


\begin{lemma}\label{WUSlemma33}Let $g \in C^2_b(\Rd)$ and $h_n \in C_0^{\infty}(\Rd)$ be such that $0\le h_n \le 1$, $h_n(x)=1$ for $|x| \le n$ and $\ \sup_{n \in \BN} \|h_n\|_{C^2_b(\Rd)} <\infty$. Then $
A(gh_n) \to Ag$
boundedly and pointwise as $n \to \infty$.
\end{lemma}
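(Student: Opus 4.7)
The plan is to decompose $A(gh_n)(x)$ into three pieces and handle each separately: the drift $\gamma\cdot\nabla(gh_n)(x)$, the small-jump integral $I_{1,n}(x):=\int_{\{0<|z|\le 1\}}\bigl((gh_n)(x+z)-(gh_n)(x)-z\cdot\nabla(gh_n)(x)\bigr)\nu(dz)$, and the large-jump integral $I_{2,n}(x):=\int_{\{|z|>1\}}\bigl((gh_n)(x+z)-(gh_n)(x)-z\cdot\nabla(gh_n)(x)\bigr)\nu(dz)$. The Leibniz rule together with the hypothesis $\sup_n \|h_n\|_{C^2_b(\Rd)}<\infty$ yields $M:=\sup_n \|gh_n\|_{C^2_b(\Rd)}<\infty$, and this uniform bound is what drives the whole argument.

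For uniform boundedness in $n$ and $x$, I would apply the standard Taylor estimate $|f(x+z)-f(x)-z\cdot\nabla f(x)|\le \tfrac12\|D^2 f\|_\infty|z|^2$ on the small-jump part to get $|I_{1,n}(x)|\le CM\int_{\{|z|\le 1\}}|z|^2\nu(dz)$, which is finite because $\alpha<2$. On the large-jump part I would bound the integrand crudely by $2\|gh_n\|_\infty+|z|\,\|\nabla(gh_n)\|_\infty\le CM(1+|z|)$ and use that $\int_{\{|z|>1\}}(1+|z|)\nu(dz)<\infty$ precisely because $\alpha>1$. Combined with the trivial bound on the drift term, these give $\sup_{n,x}|A(gh_n)(x)|<\infty$.

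For pointwise convergence at a fixed $x\in\Rd$, I would pick $N$ with $N>|x|+1$. For every $n\ge N$ the function $h_n$ equals $1$ on the ball of radius $N$ centred at the origin, so $h_n(x)=1$, $\nabla h_n(x)=0$, and $h_n(x+z)=1$ for all $|z|\le 1$. Consequently the drift term equals $\gamma\cdot\nabla g(x)$ and the small-jump integrand coincides with its $g$-analogue for every $n\ge N$, so these two pieces are equal to their desired limits already for $n$ large. For the large-jump piece I would appeal to the dominated convergence theorem: the integrand converges pointwise to $g(x+z)-g(x)-z\cdot\nabla g(x)$ since $h_n(x+z)\to 1$ for every fixed $z$ and $\nabla(gh_n)(x)=\nabla g(x)$ once $n\ge N$, and the whole integrand is dominated uniformly in $n$ by the $\nu$-integrable function $CM(1+|z|)\mathbf{1}_{\{|z|>1\}}$.

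I do not anticipate a serious obstacle; the only thing to monitor is that the range $\alpha\in(1,2)$ is used in an essential way in both sides of the splitting (the upper cutoff for the large-jump part, the lower cutoff for the small-jump part). Assembling the three estimates simultaneously yields the uniform bound $\sup_{n,x}|A(gh_n)(x)|<\infty$ and the pointwise convergence $A(gh_n)(x)\to Ag(x)$, which is exactly the bounded-pointwise statement claimed.
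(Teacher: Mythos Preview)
Your proof is correct and follows essentially the same approach as the paper: both use $\sup_n\|gh_n\|_{C^2_b}<\infty$ to dominate the integrand by $C\bigl(|z|^2\mathbf{1}_{\{|z|\le 1\}}+(1+|z|)\mathbf{1}_{\{|z|>1\}}\bigr)$, then invoke dominated convergence for the pointwise limit. Your treatment is slightly more explicit in that you observe the drift and small-jump terms stabilize exactly once $n>|x|+1$, whereas the paper simply applies DCT to the whole jump integral at once, but this is a cosmetic difference.
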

\begin{proof}
Firstly, we note that $\sup_{n \in \BN} \|gh_n\|_{C^2_b(\Rd)}  <\infty$. Therefore,
\begin{align}
&|(gh_n)(x+z)-(gh_n)(x)-z\cdot\nabla (gh_n)(x)| \notag \\
\label{WUSeqlemma33}& \hspace*{3cm} \le  C (\mathbf{1}_{\{|z| > 1\}}+|z|\mathbf{1}_{\{|z| > 1\}}+|z|^2 \mathbf{1}_{\{|z| \le 1\}})
\end{align}
for all $x,z \in \Rd$ and $n \in \BN$, where $C>0$ is a constant. Thus there exists $C'>0$ such that $\|A(gh_n)\|\le C'$. On the other hand, it is easy to verify that
\begin{equation}\label{WUSeqlemma331}
\lim_{n\to\infty}(gh_n)(x)=g(x) \qquad \mbox{and} \qquad \lim_{n\to\infty} \nabla (gh_n)(x)=\nabla g(x)
\end{equation}
for all $x \in \Rd$. Since the right-hand side of (\ref{WUSeqlemma33}) is an integrable function with respect to the measure $\nu$, by (\ref{WUSeqlemma331}) and dominated convergence theorem, we obtain \[
\lim_{n\to \infty} A(gh_n)(x) = Ag(x), \quad x \in \Rd.
\]
This completes the proof.
\end{proof}

\begin{remark}The existence of a sequence of functions $(h_n)_{n\in \BN}$ being as in Lemma \ref{WUSlemma33}  is obvious. For example, we can take
\[
g(t):=\begin{cases}\exp \frac{1}{(t-1)(t-4)}, \quad & t \in (1,4), \\ 0,  & t \notin (1,4). \end{cases}
\]
Then let $F(t): =(\int_{-\infty}^{\infty}g(s)ds)^{-1} \int_t^{\infty}g(s)ds $ and $h_n(x):=F(|x|^2/n^2)$, $x \in \Rd$.
\end{remark}

For $\lambda>0$, the time-space resolvent operator $R^{\lambda}$  of the stable process $S=(S_{t})_{t\geq 0}$ is defined by
\begin{equation}\label{WUSeqdefiGLam}
R^{\lambda}f(s,x):=\int_{0}^{\infty}\int_{\mathbb{R}^{d}}e^{-\lambda t}p_{t}(y-x)f(t+s,y)dydt,\quad  (s,x) \in \hs,
\end{equation}
where $f: \hs \to \BR$  is an arbitrary measurable function such that the integral on the right of (\ref{WUSeqdefiGLam}) is finite for all $(s,x) \in \hs$.

The following proposition is about the solvability of the parabolic resolvent equation for the generator $A$ of the stable process $S$. It plays a key role in obtaining a perturbative representation of the time-space resolvent of the solution to (\ref{WUSeqsect1}).
\begin{proposition}\label{WUSprop31}Suppose $\lambda>0$ and $g\in C^{1,2}_b(\hs)$. Let $f(s,x):=R^{\lambda}g(s,x)$, $(s,x) \in \hs$. Then $f$ belongs to $C^{1,2}_b(\hs)$ and solves the equation
\begin{equation}\label{WUSsolutionparaeq}
\lambda f-\partial_s f-Af=g \qquad \mathrm{on} \quad \hs,
\end{equation}
where $A$ is defined by (\ref{WUSgeneratorA}).
\end{proposition}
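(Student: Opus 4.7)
The plan is to verify $f\in C^{1,2}_b(\hs)$ by differentiating under the integral, and then to derive (\ref{WUSsolutionparaeq}) by commuting $A$ with the outer $t$-integral (via Lemma \ref{WUSlemma32}), invoking the semigroup forward identity $\partial_t P_t h=AP_t h$ for the spatial convolution semigroup $P_t h(x):=\int p_t(y-x)h(y)\,dy$, and integrating by parts in $t$.

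For the regularity step, after the change of variables $z=y-x$ in (\ref{WUSeqdefiGLam}) one has
\[
f(s,x)=\int_0^{\infty} e^{-\lambda t}\int_{\Rd}p_t(z)\,g(t+s,\,x+z)\,dz\,dt,
\]
so the operators $\partial_s$, $\partial_{x_i}$ and $\partial^2_{x_ix_j}$ pass onto $g$ by dominated convergence, using $\int p_t(z)\,dz=1$ and the $C^{1,2}_b$-bounds on $g$. This gives $f\in C^{1,2}_b(\hs)$ and, in particular, the identity
\[
\partial_s f(s,x)=\int_0^{\infty} e^{-\lambda t}\,P_t\bigl(\partial_s g(t+s,\cdot)\bigr)(x)\,dt,
\]
which will be used below.

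For the equation, I set $u(t,s,x):=P_t g(t+s,\cdot)(x)$; the previous step gives $u(t,s,\cdot)\in C^2_b(\Rd)$ with $C^2_b$-norm bounded uniformly in $(t,s)$ by a constant depending only on $g$. Since $m(dt):=e^{-\lambda t}\,dt$ is a finite measure on $(0,\infty)$, these uniform bounds trivially verify the hypotheses of Lemma \ref{WUSlemma32}, yielding
\[
Af(s,x)=\int_0^{\infty} e^{-\lambda t}\,A\bigl[P_t g(t+s,\cdot)\bigr](x)\,dt.
\]
I would next establish the Kolmogorov identity $\partial_t P_t h(x)=AP_t h(x)$ for $h\in C^2_b(\Rd)$: differentiating under the integral gives $\partial_t P_t h(x)=\int\partial_t p_t(y-x)h(y)\,dy$, and comparing Corollary \ref{WUScoro315} with (\ref{WUSeqremark3155}) identifies $\partial_t p_t(y-x)=A_x p_t(y-x)$; the uniform $L^1(dy)$-bound of Lemma \ref{WUSlemma316}, combined with a compactly supported approximation of $h$ via the cutoff Lemma \ref{WUSlemma33}, then permits $A_x$ to be pulled outside the $y$-integral. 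Applying this identity with $h=g(t+s,\cdot)$ and differentiating $u$ in $t$ by the chain rule gives
\[
\partial_t u(t,s,x)=A\bigl[P_t g(t+s,\cdot)\bigr](x)+P_t\bigl(\partial_s g(t+s,\cdot)\bigr)(x).
\]

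Substituting this into the formula for $Af$ and recognising the $P_t\partial_s g$-integral as $\partial_s f(s,x)$ yields
\[
Af(s,x)=\int_0^{\infty} e^{-\lambda t}\,\partial_t u(t,s,x)\,dt-\partial_s f(s,x);
\]
integration by parts in $t$, with $u(0,s,x)=g(s,x)$ and $e^{-\lambda t}u(t,s,x)\to 0$ as $t\to\infty$, gives $\int_0^{\infty} e^{-\lambda t}\partial_t u(t,s,x)\,dt=\lambda f(s,x)-g(s,x)$, and rearranging yields (\ref{WUSsolutionparaeq}). I expect the main obstacle to be the Kolmogorov identity $\partial_t P_t h=AP_t h$: Lemma \ref{WUSlemma32} cannot be applied directly to the $y$-integral defining $P_t h$, because $p_t(y-x)$ admits no $x$-uniform $L^1(dy)$-majorant. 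Circumventing this is precisely the role of Lemma \ref{WUSlemma33}, which reduces matters to compactly supported $h$ for which Lemma \ref{WUSlemma32} does apply, and the uniform $L^1(dy)$-control of $A_x p_t(y-\cdot)$ furnished by Lemma \ref{WUSlemma316} justifies the ensuing passage to the limit.
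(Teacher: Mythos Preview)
Your proposal is correct and uses the same core ingredients as the paper: the identity $\partial_t p_t(y-x)=A_x p_t(y-x)$ from (\ref{WUSeqremark3155}), Lemma \ref{WUSlemma32} to commute $A$ with integration, the cutoff Lemma \ref{WUSlemma33}, and the $L^1(dy)$-control of Lemma \ref{WUSlemma316}. The only difference is organizational. The paper introduces an $\epsilon$-truncation $f_\epsilon(s,x)=\int_\epsilon^\infty e^{-\lambda t}P_t g(t+s,\cdot)(x)\,dt$, does the integration by parts in $t$ pointwise in $y$ \emph{inside} the spatial integral (so only $p_t(y)$ and $g(t+s,x+y)$ appear, and no chain rule for $u(t,s,x)$ is needed), moves $A_x$ from $g$ onto $p_t(\cdot-x)$ via the cutoff argument, and then sends $\epsilon\to 0$. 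Your route instead abstracts the Kolmogorov identity $\partial_t P_t h=AP_t h$ as a separate lemma and integrates by parts directly on $[0,\infty)$ at the level of $u(t,s,x)$; this is arguably cleaner conceptually but requires you to justify the two-variable chain rule $\partial_t u=A P_t g(t+s,\cdot)+P_t\partial_s g(t+s,\cdot)$ and the boundedness of $\partial_t u$ down to $t=0^+$ (both of which do follow from the uniform $C^2_b$-bound $\|P_t g(t+s,\cdot)\|_{C^2_b}\le\|g\|_{C^{1,2}_b}$ and the estimate $|Ah|\le C\|h\|_{C^2_b}$). The paper's $\epsilon$-truncation sidesteps these points by keeping $t\ge\epsilon>0$ throughout and only passing to the limit at the very end.
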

\begin{proof}
By the definition of $R^{\lambda}$, we have
\begin{align}
\label{WUSeqrep1}f(s,x)=&\int_{0}^{\infty}\int_{\mathbb{R}^{d}}e^{-\lambda t}p_t(y-x)g(t+s,y)dydt \\
\label{WUSeqrep2}=& \int_{0}^{\infty}\int_{\mathbb{R}^{d}}e^{-\lambda t}p_t(y)g(t+s,x+y)dydt
\end{align}
for $(s,x) \in \hs$. In the rest of this proof we will use either the representation (\ref{WUSeqrep1}) or (\ref{WUSeqrep2}), according to our needs.

Since $g\in C^{1,2}_b(\hs)$, the functions $|g|,|\partial_t g|, |\nabla g|$ and $|\partial^2_{ij} g |$, $i,j=1,\cdots,d$, are all bounded on $\hs$. It follows from (\ref{WUSeqrep1}) and dominated convergence theorem that $\partial_s f$ is bounded and continuous on $\hs$; moreover,
\[
\partial_s f(s,x)=\int_{0}^{\infty}\int_{\mathbb{R}^{d}}e^{-\lambda t}p_t(y-x)\partial_s ( g(t+s,y))dydt, \quad (s,x)\in \hs.
\]
Similarly, by (\ref{WUSeqrep2}), $\nabla f$ and $\partial^2_{ij} f $, $i,j=1,\cdots,d$, are also bounded and continuous. Thus $f\in C^{1,2}_b(\hs)$. Furthermore, it follows from (\ref{WUSeqrep2}) and Lemma \ref{WUSlemma32} that
\begin{equation}\label{WUSeqprop34new1}
A f(s,x)=\int_{0}^{\infty}\int_{\mathbb{R}^{d}}e^{-\lambda t}p_t(y)A_x(g(t+s,x+y))dydt.
\end{equation}

We are now in a position to define an approximating sequence $(f_{\epsilon})_{\epsilon>0}$ of $f$. In the following we will first derive an equation that $f_{\epsilon}$ fulfills, and then take the limit as $\epsilon \to 0$ to obtain (\ref{WUSsolutionparaeq}) for $f$.

Let $\epsilon >0$ and
\[
f_{\epsilon}(s,x):=\int_{\epsilon}^{\infty}\int_{\mathbb{R}^{d}}e^{-\lambda t}p_t(y)g(t+s,x+y)dydt.
\]
Then
\[
\partial_s f_{\epsilon}(s,x)=\int_{\epsilon}^{\infty}\int_{\mathbb{R}^{d}}e^{-\lambda t}p_t(y)\partial_s (g(t+s,x+y))dydt
\]
and
\begin{equation}\label{WUSeqprop341}
\lim_{\epsilon \to 0} \partial_s f_{\epsilon}(s,x)=\partial_s f (s,x), \quad (s,x)\in \hs .
\end{equation}
Noting (\ref{WUSsect315}), it follows from Lemma \ref{WUScoro31} that $\lim_{t\to\infty}p_t(x)=0$ for all $x \in \Rd$. By Fubini's theorem and integration by parts formula, we have
\begin{align}
\partial_s f_{\epsilon}(s,x)=& \int_{\mathbb{R}^{d}}\int_{\epsilon}^{\infty}e^{-\lambda t}p_t(y)\partial_s (g(t+s,x+y))dtdy \notag\\
=& \int_{\mathbb{R}^{d}}\int_{\epsilon}^{\infty}e^{-\lambda t}p_t(y)\partial_t (g(t+s,x+y))dtdy \notag\\
=&\int_{\mathbb{R}^{d}}e^{-\lambda t}p_t(y) g(t+s,x+y)\big|^{t=\infty}_{t=\epsilon}dy  \notag \\
& \quad -\int_{\mathbb{R}^{d}} \int_{\epsilon}^{\infty}g(t+s,x+y)\partial_t\big( e^{-\lambda t} p_t(y)\big)dtdy \notag\\
=&-\int_{\mathbb{R}^{d}}e^{-\lambda \epsilon}p_\epsilon(y) g(s+\epsilon,x+y)dy  \notag \\
& \quad +\int_{\mathbb{R}^{d}} \int_{\epsilon}^{\infty}e^{-\lambda t}g(t+s,x+y)(\lambda p_t(y)-\partial_t p_t(y))dtdy \notag \\
\label{WUSeqprop342}=:&I_{\epsilon}+J_{\epsilon}.
\end{align}
Obviously,
\begin{align}
\lim_{\epsilon \to 0}I_{\epsilon}=&-\lim_{\epsilon \to 0} \int_{\mathbb{R}^{d}}p_\epsilon(y) g(s+\epsilon,x+y)dy \notag \\
\label{WUSeqprop343}=& -\lim_{\epsilon \to 0}\bfE[g(s+\epsilon,x+S_{\epsilon})]=-g(s,x).
\end{align}
By Fubini's theorem and a change of variables,
\begin{align}
J_{\epsilon}=& \int_{\epsilon}^{\infty} \int_{\mathbb{R}^{d}} e^{-\lambda t}g(t+s,x+y)(\lambda p_t(y)-\partial_t p_t(y))dydt \notag \\
\label{WUSeqprop344}=& \int_{\epsilon}^{\infty} \int_{\mathbb{R}^{d}} e^{-\lambda t}(\lambda p_t(y-x)-\partial_t p_t(y-x) )g(t+s,y)dydt.
\end{align}

Just as in (\ref{WUSeqprop34new1}), we have
\[
Af_{\epsilon}(s,x)= \int_{\epsilon}^{\infty} \int_{\mathbb{R}^{d}} e^{-\lambda t} p_t(y) A_x(g(t+s,x+y))dydt,
\]
so
\begin{equation}\label{WUSeqprop345}
\lim_{\epsilon \to 0} Af_{\epsilon}(s,x) = \int_{0}^{\infty} \int_{\mathbb{R}^{d}} e^{-\lambda t} p_t(y) A_x(g(t+s,x+y))dydt=Af(s,x).
\end{equation}

Let $h_n$ be as in Lemma \ref{WUSlemma33}. By Lemma \ref{WUSlemma32}, Lemma \ref{WUSlemma33} and dominated convergence theorem, we have
\begin{align*}
Af_{\epsilon}(s,x)=& \int_{\epsilon}^{\infty} \int_{\mathbb{R}^{d}} e^{-\lambda t} p_t(y) A_x(g(t+s,x+y))dydt  \\
=& \lim_{n\to\infty}\int_{\epsilon}^{\infty} \int_{\mathbb{R}^{d}} e^{-\lambda t} p_t(y) A_x(h_n(x+y) g(t+s,x+y))dydt \\
=& \lim_{n\to\infty}A_x \Big(\int_{\epsilon}^{\infty} \int_{\mathbb{R}^{d}} e^{-\lambda t} p_t(y) h_n(x+y) g(t+s,x+y)dydt\Big) \\
=& \lim_{n\to\infty}A_x \Big(\int_{\epsilon}^{\infty} \int_{\mathbb{R}^{d}} e^{-\lambda t} p_t(y-x) h_n(y) g(t+s,y)dydt\Big) \\
=& \lim_{n\to\infty} \int_{\epsilon}^{\infty} \int_{\mathbb{R}^{d}} e^{-\lambda t} A_x(p_t(y-x)) h_n(y) g(t+s,y)dydt.
\end{align*}
Since $|h_n| \le 1$ and $g$ is also bounded, it follows from Lemma \ref{WUSlemma316} and  dominated convergence theorem that
\begin{equation} \label{WUSeqprop313}
Af_{\epsilon}(s,x)=\int_{\epsilon}^{\infty} \int_{\mathbb{R}^{d}} e^{-\lambda t} A_x(p_t(y-x)) g(t+s,y)dydt.
\end{equation}

Finally, we verify that $f$---as the limit of $f_{\epsilon}$---is a solution to the equation (\ref{WUSsolutionparaeq}). By (\ref{WUSeqprop342}) , (\ref{WUSeqprop344}) and (\ref{WUSeqprop313}), we have
\begin{align*}
&(\lambda f_{\epsilon}-\partial_sf_{\epsilon}-Af_{\epsilon} )(s,x)   \\
=& \lambda f_{\epsilon}(s,x)-I_{\epsilon}-J_{\epsilon}-\int_{\epsilon}^{\infty} \int_{\mathbb{R}^{d}} e^{-\lambda t} A_x(p_t(y-x)) g(t+s,y)dydt \\
=& \int_{\epsilon}^{\infty} \int_{\mathbb{R}^{d}} e^{-\lambda t}\big( \partial_tp_t(y-x)-\lambda p_t(y-x)-A_x(p_t(y-x))\big) g(t+s,y)dydt  \\
& \quad +\lambda f_{\epsilon}(s,x)-I_{\epsilon}.
\end{align*}
Since
\[
\partial_t p_t(y-x)=-\frac{1}{(2\pi)^d}\int_{\Rd}\psi(u)e^{-t\psi(u)}e^{-iu\cdot (y-x)}du, \quad x \in \Rd, t>0,
\]
it follows from (\ref{WUSeqremark3155}) that $\partial_t p_t(y-x)=A_x\big(p_t(y-x)\big)$,
which implies
\begin{align}
&(\lambda f_{\epsilon}-\partial_sf_{\epsilon}-Af_{\epsilon} )(s,x)  \notag \\
\label{WUSeqepsiloneq}=&\lambda f_{\epsilon}(s,x)-I_{\epsilon}-\lambda \int_{\epsilon}^{\infty}\int_{\mathbb{R}^{d}} e^{-\lambda t} p_t(y-x)g(t+s,y)dydt=-I_{\epsilon}.
\end{align}
Obviously, $f_{\epsilon}(s,x)$ converges to $f(s,x)$ as $\epsilon \to 0$. Letting $\epsilon \to 0$ in (\ref{WUSeqepsiloneq}), the equation (\ref{WUSsolutionparaeq}) follows from (\ref{WUSeqprop341}), (\ref{WUSeqprop343}) and (\ref{WUSeqprop345}).
\end{proof}


\begin{proposition}\label{WUSprop32}Let $T>0$ and $f:\hs \to \BR$ be a measurable function such that $supp(f)\subset [0,T]\times \Rd$.\\
 (i) If $f\in L^q([0,T];L^p(\Rd))$ with $d/p+\alpha/q <\alpha$, then
 \[
 \|R^{\lambda}f\|\le N_{\lambda} \|f\|_{L^q([0,T];L^p(\Rd))},\]
where $N_{\lambda}>0$ is a constant depending on $\lambda$, $p$ and $q$. Moreover, $N_{\lambda} \downarrow 0$ as $\lambda \to \infty$.  \\
(ii) If $f\in L^q([0,T];L^p(\Rd))$ with $d/ p+\alpha/q <\alpha-1$, then
\[
\|\nabla R^{\lambda}f\| \le M_{\lambda}\|f\|_{L^q([0,T];L^p(\Rd))},\]
where $M_{\lambda}>0$ is a constant depending on $\lambda$, $p$ and $q$. Moreover, $M_{\lambda} \downarrow 0$ as $\lambda \to \infty$. \\
\end{proposition}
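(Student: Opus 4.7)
The plan is to estimate $R^{\lambda}f(s,x)$ by a direct two-step H\"older argument: first integrate in the spatial variable $y$ using the $L^{p'}$-norm of the kernel $p_t$ (resp.\ $\nabla p_t$), then integrate in the time variable $t$ using H\"older with exponents $(q,q')$, where $p',q'$ are the conjugate exponents of $p,q$. The scaling identity (\ref{WUSsect315}) together with Lemma \ref{WUScoro31} is the only real input.

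\textbf{Step 1: Scaling of $L^{p'}$-norms.} Since $L^{p'}$-norms are translation invariant, the factor $(1-t^{1-1/\alpha})\gamma$ in (\ref{WUSsect315}) is harmless, and the substitution $z=t^{-1/\alpha}x$ gives
\[
\|p_t\|_{L^{p'}(\Rd)} = t^{-d/(\alpha p)}\,\|p_1\|_{L^{p'}(\Rd)}.
\]
Differentiating (\ref{WUSsect315}) in $x$ produces an extra $t^{-1/\alpha}$, and the same rescaling yields
\[
\|\nabla p_t\|_{L^{p'}(\Rd)} = t^{-1/\alpha-d/(\alpha p)}\,\|\nabla p_1\|_{L^{p'}(\Rd)} .
\]
Both $\|p_1\|_{L^{p'}}$ and $\|\nabla p_1\|_{L^{p'}}$ are finite by Lemma \ref{WUScoro31}.

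\textbf{Step 2: Proof of (i).} Using the support condition on $f$, the outer integral in (\ref{WUSeqdefiGLam}) is confined to $t\in[0,T]$. H\"older in $y$ and then in $t$, combined with Step 1, gives
\[
|R^{\lambda}f(s,x)|
\le \|p_1\|_{L^{p'}}\Bigl(\int_0^{\infty} e^{-\lambda q' t}\,t^{-dq'/(\alpha p)}\,dt\Bigr)^{1/q'}\|f\|_{L^q([0,T];L^p(\Rd))}.
\]
The condition $d/p+\alpha/q<\alpha$ is equivalent to $dq'/(\alpha p)<1$, which guarantees integrability near $t=0$. The substitution $u=\lambda q' t$ evaluates the inner integral as $C\,\lambda^{dq'/(\alpha p)-1}$, so
\[
N_\lambda = C'\,\lambda^{d/(\alpha p)-1/q'},
\]
and the exponent $d/(\alpha p)-1/q'$ is strictly negative under the assumption; hence $N_\lambda\downarrow 0$ as $\lambda\to\infty$.

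\textbf{Step 3: Proof of (ii).} The stronger condition $d/p+\alpha/q<\alpha-1$ rewrites as $(1/\alpha+d/(\alpha p))q'<1$, which together with Step 1 makes the integrand $e^{-\lambda t}|\nabla_x p_t(y-x)|\,|f(t+s,y)|$ integrable on $[0,T]\times\Rd$; this legitimates differentiation under the integral and yields $\nabla R^{\lambda}f(s,x)=\int_0^{\infty}\!\!\int_{\Rd}e^{-\lambda t}\nabla_x p_t(y-x)f(t+s,y)\,dy\,dt$. Repeating the two-step H\"older argument with $\|\nabla p_t\|_{L^{p'}}$ in place of $\|p_t\|_{L^{p'}}$ gives
\[
|\nabla R^{\lambda}f(s,x)|\le \|\nabla p_1\|_{L^{p'}}\Bigl(\int_0^{\infty}e^{-\lambda q' t} t^{-(1/\alpha+d/(\alpha p))q'}dt\Bigr)^{1/q'}\|f\|_{L^q([0,T];L^p)},
\]
and the same change of variables produces $M_\lambda = C''\,\lambda^{1/\alpha+d/(\alpha p)-1/q'}$. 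The exponent is negative precisely when $d/p+\alpha/q<\alpha-1$, so $M_\lambda\downarrow 0$ as $\lambda\to\infty$.

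There is no substantive obstacle: the whole argument reduces to carefully bookkeeping the exponents coming from the $\alpha$-scaling of $p_t$ and $\nabla p_t$ and from the two applications of H\"older. The only point requiring mild care is the justification of differentiating $R^{\lambda}f$ under the integral in part (ii), but this is immediate from the spatial $L^{p'}$-bound on $\nabla p_t$ produced in Step 1 together with the assumed integrability of $f$.
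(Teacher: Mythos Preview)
Your proof is correct and follows essentially the same two-step H\"older argument as the paper: spatial H\"older against $\|p_t\|_{L^{p'}}$ (resp.\ $\|\nabla p_t\|_{L^{p'}}$), then temporal H\"older, with the scaling identity (\ref{WUSsect315}) supplying the power of $t$. The only differences are cosmetic: you evaluate $N_\lambda,M_\lambda$ explicitly as powers of $\lambda$ via the substitution $u=\lambda q' t$, whereas the paper leaves them as integrals and invokes dominated convergence for the limit $\lambda\to\infty$; and for the interchange of $\nabla$ and the integral in (ii) the paper runs a short approximation $f_n=f\mathbf{1}_{\{|y|\le n\}}$ with uniform convergence in $C^1_b$, while you appeal directly to integrability of the differentiated kernel---both are valid.
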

\begin{proof}
(i)  Since $supp(f)\subset [0,T]\times \Rd$, upon using H\"older's inequality twice, we get
\begin{align*}
|R^{\lambda}f(s,x)|=& \Big|\int_{0}^{\infty}e^{-\lambda t}\int_{\mathbb{R}^{d}}p_{t}(y-x)f(t+s,y)dydt\Big| \\
\le & \int_{0}^{\infty}e^{-\lambda t}\|p_{t}(\cdot-x)\|_{L^{p^*}(\Rd)}\|f(t+s,\cdot)\|_{L^p(\Rd)}dt \\
=& \int_{0}^{T}e^{-\lambda t}\|p_{t}\|_{L^{p^*}(\Rd)}\|f(t+s,\cdot)\|_{L^p(\Rd)}dt \\
\le &\Big(\int_{0}^{T}e^{-q^*\lambda t}\|p_{t}\|^{q^*}_{L^{p^*}(\Rd)}dt\Big)^{1/q^*}\|f\|_{L^q([0,T];L^p(\Rd))},
\end{align*}
where $p^*,q^*>0$ are such that $1/p^*+1/p=1$ and $1/q^*+1/q=1$. By the scaling property (\ref{WUSsect315}),
\begin{align*}
\|p_{t}\|_{L^{p^*}(\Rd)}=&\Big(\int_{\mathbb{R}^{d}}|p_{t}(y)|^{p^*}dy\Big)^{1/p^*} \\
=& \Big(\int_{\mathbb{R}^{d}}t^{-d(p^*-1)/\alpha}|p_{1}(y)|^{p^*}dy\Big)^{1/p^*} \\
=& t^{-d(p^*-1)/(\alpha p^*)}\|p_1\|_{L^{p^*}(\Rd)}.
\end{align*}
Thus the assertion holds with
\[
N_{\lambda}:= \Big(\int_{0}^{\infty}e^{-q^*\lambda t}t^{-dq^*(p^*-1)/(\alpha p^*)}dt\Big)^{1/q^*}\|p_1\|_{L^{p^*}(\Rd)},
\]
which is finite if $-dq^*(p^*-1)/(\alpha p^*)>-1$, or equivalently, $d/p+\alpha/q <\alpha$. By dominated convergence theorem, $\lim_{\lambda \to \infty } N_{\lambda}= 0$. \\
(ii) We first show that for fixed $t>0$,
\begin{equation}\label{WUSeqlemm341}
\nabla_x\Big( \int_{\mathbb{R}^{d}} p_{t}(y-x)f(t+s,y)dy\Big)=\int_{\mathbb{R}^{d}}\nabla_x( p_{t}(y-x))f(t+s,y)dy.
\end{equation}
To this end, set $f_n(t,y):=f(t,y)\mathbf{1}_{\{|y|\le n\}}(y)$, $(t,y) \in \hs$. By dominated convergence theorem and H\"older's inequlaity,
\begin{equation}\label{WUSeqlemm342}
 \int_{\mathbb{R}^{d}} p_{t}(y-x)f_n(t+s,y)dy \to \int_{\mathbb{R}^{d}} p_{t}(y-x)f(t+s,y)dy
\end{equation}
uniformly in $x \in \Rd$ as $n\to\infty$. Again by dominated convergence theorem,
\[
\nabla_x \Big(\int_{\mathbb{R}^{d}} p_{t}(y-x)f_n(t+s,y)dy\Big)=\int_{\mathbb{R}^{d}}\nabla_x (p_{t}(y-x))f_n(t+s,y)dy.
\]
Just as in (\ref{WUSeqlemm342}),
\[
 \int_{\mathbb{R}^{d}} \nabla_x (p_{t}(y-x))f_n(t+s,y)dy \to \int_{\mathbb{R}^{d}}\nabla_x (p_{t}(y-x))f(t+s,y)dy
\]
uniformly in $x \in \Rd$ as $n\to\infty$. Since $ \int_{\mathbb{R}^{d}} p_{t}(y-\cdot)f_n(t+s,y)dy \in C^1_b(\Rd)$ for each fixed $s,t \ge 0$ and $C^1_b(\Rd)$ is a Banach space, it follows that
\[\int_{\mathbb{R}^{d}} p_{t}(y-\cdot)f(t+s,y)dy \in C^1_b(\Rd)\]
and (\ref{WUSeqlemm341}) holds.

For $i=1,\cdots,d$, by (\ref{WUSsect315}), we get
\begin{align*}
\|\partial_i p_{t}\|_{L^{p^*}(\Rd)}=&\Big(\int_{\mathbb{R}^{d}}|\partial_{x_i}p_{t}(x)|^{p^*}dx\Big)^{1/p^*} \\
=& \Big(\int_{\mathbb{R}^{d}}t^{-p^*(d+1)/\alpha}|(\partial_ip_{1})(t^{-1/\alpha}x+(1-t^{1-1/\alpha})\gamma)|^{p^*}dx\Big)^{1/p^*} \\
=& \Big(\int_{\mathbb{R}^{d}}t^{-p^*(d+1)/\alpha+d/\alpha}|\partial_{x_i}p_{1}(x)|^{p^*}dx\Big)^{1/p^*} \\
=& t^{-(d+1)/\alpha+d/(\alpha p^*)} \|\partial_{i}p_1\|_{L^{p^*}(\Rd)}.
\end{align*}
As in (i), we can apply H\"older's inequality to obtain
\begin{align}
\Big|\int_{\mathbb{R}^{d}}\nabla_x( p_{t}(y-x))f(t+s,y)dy\Big| \le & \|\nabla_x(p_{t}(\cdot-x))\|_{L^{p^*}(\Rd)}\|f(t+s,\cdot)\|_{L^p(\Rd)} \notag \\
\le &  \|\nabla p_{t} \|_{L^{p^*}(\Rd)}\|f(t+s,\cdot)\|_{L^p(\Rd)} \notag \\
\label{WUSeqlemm343} \le & C t^{-(d+1)/\alpha+d/(\alpha p^*)}\|f(t+s,\cdot)\|_{L^p(\Rd)},
\end{align}
where $C>0$ is a constant. If $d/ p+\alpha/q <\alpha-1$, then
\[
-q^*(d+1)/\alpha+dq^*/(\alpha p^*)>-1.
\]
Since $f\in L^q([0,T];L^p(\Rd))$, by H\"older's inequality, we see that the right-hand side of  (\ref{WUSeqlemm343}) is an integrable  function (with the variable $t$) on $[0,T]$. Now, it follows from (\ref{WUSeqlemm341}),  (\ref{WUSeqlemm343}) and dominated convergence theorem that
\[
\nabla R^{\lambda}f(s,x)=\int_{0}^{\infty}\exp(-\lambda t)\int_{\mathbb{R}^{d}}\nabla_x( p_{t}(y-x))f(t+s,y)dydt.
\]
The rest of the proof is completely similar to that of  (i), and we omit the details. We can take
\[
M_{\lambda}:= C \Big(\int_{0}^{\infty}e^{-q^*\lambda t}t^{-q^*(d+1)/\alpha+dq^*/(\alpha p^*)}dt\Big)^{1/q^*}.
\]
\end{proof}

Similarly to Proposition \ref{WUSprop32} (ii), we have the following  estimate for $R^{\lambda}$. Its proof is very simple and is thus omitted.


\begin{lemma}\label{WUSlemma34}
For each $\lambda >0$, there exists a constant $L_{\lambda}>0$ such that
\[
    \|\nabla R^{\lambda}g\|\le L_{\lambda}  \|g\|_{L^{\infty}([0,\infty);L^{\infty}(\mathbb{R}^{d}))}\]
for all $g \in L^{\infty}([0,\infty);L^{\infty}(\mathbb{R}^{d}))$, where $L_{\lambda} \downarrow 0$ as $\lambda \to \infty$.
\end{lemma}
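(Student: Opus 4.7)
The plan is to follow the template of Proposition \ref{WUSprop32}(ii), with the simplification that because $g$ is merely bounded rather than in $L^p$, we may use the $L^1$-norm of $\nabla p_t$ in place of an $L^{p^*}$-norm. This replaces the Hölder-in-space step by a trivial sup-out-of-the-integral bound, which is why no condition like $d/p+\alpha/q<\alpha-1$ is needed.

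First I would justify the interchange
\[
\nabla R^{\lambda}g(s,x)=\int_{0}^{\infty}e^{-\lambda t}\int_{\mathbb{R}^{d}}\nabla_x\bigl(p_{t}(y-x)\bigr)g(t+s,y)\,dy\,dt
\]
by exactly the truncation argument already used in the proof of Proposition \ref{WUSprop32}(ii): approximate by $g_n(t,y):=g(t,y)\mathbf{1}_{\{|y|\le n\}}$, differentiate under the integral sign for each $n$ (this is legitimate since by Lemma \ref{WUScoro31} each $\partial_i p_t$ lies in $L^1(\Rd)$, and $g_n$ has compact support in $y$), and pass to the limit using uniform convergence in $x$ together with the Banach-space structure of $C^1_b(\Rd)$.

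Next, from the scaling identity (\ref{WUSsect315}) one computes, for $i=1,\dots,d$,
\[
\partial_i p_t(x)=t^{-(d+1)/\alpha}\,(\partial_i p_1)\bigl(t^{-1/\alpha}x+(1-t^{1-1/\alpha})\gamma\bigr),
\]
so that by the change of variables $y=t^{-1/\alpha}x+(1-t^{1-1/\alpha})\gamma$,
\[
\|\partial_i p_t\|_{L^1(\Rd)}=t^{-1/\alpha}\,\|\partial_i p_1\|_{L^1(\Rd)},
\]
where the right-hand factor is finite by Lemma \ref{WUScoro31}. Combining this with the trivial bound $|g(t+s,y)|\le \|g\|_{L^{\infty}([0,\infty);L^{\infty}(\Rd))}$ gives
\[
|\nabla R^{\lambda}g(s,x)|\le\|\nabla p_1\|_{L^1(\Rd)}\,\|g\|_{L^{\infty}([0,\infty);L^{\infty}(\Rd))}\int_{0}^{\infty}e^{-\lambda t}\,t^{-1/\alpha}\,dt.
\]

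Finally, since $\alpha>1$ we have $1/\alpha<1$, so a routine substitution $u=\lambda t$ yields
\[
\int_{0}^{\infty}e^{-\lambda t}\,t^{-1/\alpha}\,dt=\lambda^{1/\alpha-1}\,\Gamma(1-1/\alpha)<\infty,
\]
and we may take
\[
L_{\lambda}:=\|\nabla p_1\|_{L^1(\Rd)}\cdot\lambda^{1/\alpha-1}\,\Gamma(1-1/\alpha),
\]
which plainly satisfies $L_{\lambda}\downarrow 0$ as $\lambda\to\infty$ because the exponent $1/\alpha-1$ is strictly negative. There is no real obstacle here; the only non-trivial point, the differentiation under the integral, is already handled verbatim in Proposition \ref{WUSprop32}(ii).
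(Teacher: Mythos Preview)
Your main computation is correct and matches the approach the paper intends: the paper omits the proof, saying only that it is ``very simple'' and ``similar to Proposition \ref{WUSprop32}(ii)'', and your scaling calculation $\|\partial_i p_t\|_{L^1(\Rd)}=t^{-1/\alpha}\|\partial_i p_1\|_{L^1(\Rd)}$ together with the explicit $L_\lambda=\|\nabla p_1\|_{L^1(\Rd)}\,\lambda^{1/\alpha-1}\Gamma(1-1/\alpha)$ is exactly what is wanted.

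There is, however, a small gap in your justification of the interchange of $\nabla$ and the integral. The truncation argument from Proposition \ref{WUSprop32}(ii) does \emph{not} transfer verbatim: the passage to the limit there used $\|p_t(\cdot-x)\|_{L^{p^*}}\|f_n-f\|_{L^p}\to 0$, which gives convergence uniform in $x$. For $g\in L^\infty$ your truncations $g_n=g\mathbf{1}_{\{|y|\le n\}}$ do not converge to $g$ in any norm, and in fact
\[
\sup_{x\in\Rd}\int_{\{|y|>n\}}p_t(y-x)\,dy = \sup_{x\in\Rd}\bfP(|x+S_t|>n)=1,
\]
so the required uniform-in-$x$ convergence fails. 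The remedy is simpler than the truncation: since $\partial_i p_t\in L^1(\Rd)$ by Lemma \ref{WUScoro31} and $|g|\le\|g\|_\infty$, the integrand $\partial_{x_i}(p_t(y-x))g(t+s,y)$ is dominated by the $x$-independent $L^1$ function $\|g\|_\infty|\partial_i p_t(y-x)|$, and the standard dominated-convergence criterion for differentiation under the integral sign applies directly, first to the $y$-integral and then (using your bound $e^{-\lambda t}t^{-1/\alpha}\|g\|_\infty\|\nabla p_1\|_{L^1}$) to the $t$-integral. With this correction your proof is complete.
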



\section{Existence and Uniqueness of Weak Solutions: Local Case}


In this section,  we confine ourselves to the local case and thus assume in addition to Assumption \ref{WUSass21} the following:

\begin{assumption}\label{WUSassumption41} The drift $b:\hs \to \Rd$ is such that $supp \big(b\big ) \subset [0,T]\times \Rd$ and  $b \in L^{\infty}([0,T];L^{\infty}(\mathbb{R}^{d}))+ L^{q}([0,T];L^{p}(\mathbb{R}^{d}))$ for some $T,p,q>0$ with $d/ p+\alpha/q <\alpha-1$.
 \end{assumption}

We first consider smooth approximations of the singular drift $b$. According to Assumption \ref{WUSassumption41}, we can assume $b=b_1+b_2$ with $supp \big(b_i\big ) \subset [0,T]\times \Rd$ for $i=1,2$, and
\begin{equation}\label{WUSeqsect4-1}
\|b_1\|_{L^{\infty}([0,T];L^{\infty}(\mathbb{R}^{d}))} \le  M, \quad  \|b_2 \|_{ L^{q}([0,T];L^{p}(\mathbb{R}^{d}))}<\infty,
\end{equation}
where $M>0$ is a constant and $d/ p+\alpha/q <\alpha-1$.  Let
\[
 \hat{b}_{1,n}:=b_1 \mathbf{1}_{\{|b_1| \le  n\}}, \quad \hat{b}_{2,n}:=b_2 \mathbf{1}_{\{|b_2| \le  n\}}.
 \]
Next,  for $(t,y) \in \hs$, define
 \begin{equation}\label{WUSeqsect40}
 b^{(n)}_{1}(t,y):=(\hat{b}_{1,n}(t,\cdot)\ast \varphi_n)(y),  \quad b^{(n)}_{2}(t,y):=(\hat{b}_{2,n}(t,\cdot)\ast \varphi_n)(y),
\end{equation}
where $(\varphi_n)_{n \in \BN}$ is a mollifying sequence on $\Rd$. Then $b^{(n)}_{1}$ and $b^{(n)}_{2}$ are both bounded and globally Lipschitz in the space variable. Finally, let
\[
 b^{(n)}:=b^{(n)}_{1}+b^{(n)}_{2}.
\]
Obviously,
\begin{equation}\label{WUSeqsect41}
 supp \big(b^{(n)}\big ) \subset [0,T]\times \Rd \quad \mbox{and} \quad \|b^{(n)}\| \le 2n.
\end{equation}

Since $\|b_{1}\|_{L^{\infty}([0,T];L^{\infty}(\mathbb{R}^{d}))}\le M$, it is easy to see that
\begin{equation}\label{WUSeqb1nlinftynorm}
\|b^{(n)}_{1}\|_{L^{\infty}([0,T];L^{\infty}(\mathbb{R}^{d}))} \le M .
\end{equation}

\begin{remark}\label{WUSremark42}For each fixed $t\ge0$, it follows from Young's inequality that
 \begin{equation}\label{WUSeqremark411}
 \|b^{(n)}_{2}(t,\cdot)\|_{L^{p}(\mathbb{R}^{d})}\le \|\hat{b}_{2,n}(t,\cdot)\|_{L^{p}(\mathbb{R}^{d})}\le \|b_{2}(t,\cdot)\|_{L^{p}(\mathbb{R}^{d})}.
 \end{equation}
 Therefore,
 \begin{equation}\label{WUSeqb2nlpqnorm}
 \|b^{(n)}_{2}\|_{L^{q}([0,T];L^{p}(\mathbb{R}^{d}))}\le \|b_{2}\|_{L^{q}([0,T];L^{p}(\mathbb{R}^{d}))}.
 \end{equation}
 If $t\ge 0$ is such that $\|b_{2}(t,\cdot)\|_{L^{p}(\mathbb{R}^{d})}<\infty$, then
 \begin{align}
 &\lim_{n \to \infty}\|b^{(n)}_{2}(t,\cdot)-b_{2}(t,\cdot)\|_{L^{p}(\mathbb{R}^{d})}\notag \\
 =& \lim_{n \to \infty}\|\hat{b}_{2,n}(t,\cdot)\ast \varphi_n-b_2(t,\cdot)\ast \varphi_n+b_2(t,\cdot)\ast \varphi_n- b_{2}(t,\cdot)\|_{L^{p}(\mathbb{R}^{d})} \notag \\
 \le & \limsup_{n \to \infty}\|\hat{b}_{2,n}(t,\cdot)-b_2(t,\cdot)\|_{L^{p}(\mathbb{R}^{d})}+ \limsup_{n \to \infty}\|b_2(t,\cdot)\ast \varphi_n-b_2(t,\cdot)\|_{L^{p}(\mathbb{R}^{d})}\notag \\
 \label{WUSeqremark412}=&0.
 \end{align}
 It follows from (\ref{WUSeqremark411}), (\ref{WUSeqremark412}) and dominated convergence theorem that
 \begin{equation}\label{WUSeqremark413}
 \lim_{n \to \infty}\|b^{(n)}_{2}-b_{2}\|_{L^{q}([0,T];L^{p}(\mathbb{R}^{d}))}=0.
 \end{equation}
 \end{remark}

Now, consider an $\alpha$-stable ($1<\alpha<2$) process $S=(S_t)_{t\ge0}$  defined on some probability space $(\Omega, \CA, \bfP)$. As before, we assume that $S$ fulfills Assumption \ref{WUSass21}, that is, $S$ is non-degenerate. Recall that $R^{\lambda}$ is the time-space resolvent of $S$ and is defined in (\ref{WUSeqdefiGLam}).

Define an operator $BR^{\lambda}$ as follows. Given a function $f:\hs \to \BR$  for which $\nabla( R^{\lambda}f)$ is everywhere defined, define $BR^{\lambda}f :\hs \to \BR$  by
\begin{equation}\label{defiofBR}
    BR^{\lambda}f(t,y):=b(t,y) \cdot \nabla R^{\lambda}f(t,y), \quad (t,y) \in \hs.
\end{equation}
For example, $BR^{\lambda}f$ is well-defined if $f \in L^{\infty}(\BR_+;L^{\infty}(\mathbb{R}^{d}))$. Similarly, define $B_nR^{\lambda}f $ as
\begin{equation}\label{defiofBnR}
B_nR^{\lambda}f(t,y):=b^{(n)}(t,y) \cdot \nabla R^{\lambda}f(t,y), \quad (t,y) \in \hs,
\end{equation}
provided that $\nabla R^{\lambda}f$ exists everywhere.

Let $M_{\lambda}$ and $L_{\lambda}$ be as in Proposition \ref{WUSprop32} and Lemma \ref{WUSlemma34}, respectively. Since $M_{\lambda} \downarrow 0$ and $L_{\lambda} \downarrow 0$ as $\lambda \to \infty$, we can  find $\lambda_0>0$  such that
\begin{equation}\label{WUSeqsect42}
L_{\lambda_0}M+M_{\lambda_0}\|b_2\|_{L^q([0,T];L^p(\Rd))}< 1,
\end{equation}
where $M>0$ is the constant appearing in (\ref{WUSeqsect4-1}). If $\lambda> \lambda_{0}$, then $L_{\lambda} \le L_{\lambda_0}$ and $M_{\lambda} \le M_{\lambda_0}$. In view of (\ref{WUSeqsect42}), we have
\begin{equation}\label{WUSeqlemma437}
\kappa_{\lambda}:=L_{\lambda}M+M_{\lambda}\|b_2\|_{L^q([0,T];L^p(\Rd))}<1
\end{equation}
for any $ \lambda> \lambda_{0}$.

Note that $b^{(n)}$ is bounded and globally Lipschitz in the space variable. We now consider an $\alpha$-stable process with drift $b^{(n)}$.
\begin{lemma}\label{newlemmasn}Let $\lambda_0>0$ and $\kappa_{\lambda}$ be as in (\ref{WUSeqsect42}) and (\ref{WUSeqlemma437}), respectively. Suppose $(s,x) \in \hs$. Let $X=(X_t)_{t\ge0}$ be the unique strong solution to the SDE
\begin{equation}\label{WUSeqsdewithdriftbn}
\begin{cases}dX_{t}=dS_{t}+b^{(n)}(s+t, X_{t})dt, & \ t\ge 0,\\
   \   X_{0}=x.
    \end{cases}
\end{equation}
Then for any $\lambda>\lambda_0$ and $g \in \mathcal{B}_{b}(\hs)$, we have
\begin{equation}\label{eqressn}
    \mathbf{E}\Big[\int_{0}^{\infty}e^{-\lambda t}f(t+s,X_{t})dt\Big]=\sum_{k=0}^{\infty}R^{\lambda}(B_{n}R^{\lambda})^{k}g(s,x),
\end{equation}
where $B_nR^{\lambda}$ is defined by (\ref{defiofBnR}). Moreover,  for each $k \in \BN$,
\begin{equation}\label{WUSeqlemma430}
\|R^{\lambda}(B_{n}R^{\lambda})^{k}g\|
 \leq  L_{\lambda}\Vert g\Vert (\kappa_{\lambda})^{k-1}\big(M\lambda^{-1}+N_{\lambda}\|b_2\|_{L^q([0,T];L^p(\Rd))}\big),
\end{equation}
which means that the series on the right-hand side of (\ref{eqressn}) converges and its convergence rate is independent of $(s,x)$ and $n$.
\end{lemma}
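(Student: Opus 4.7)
The plan is to combine Dynkin's formula for the SDE (\ref{WUSeqsdewithdriftbn}) with the resolvent PDE of Proposition \ref{WUSprop31} to derive a one-step perturbation identity
\[
U g = R^\lambda g + U(B_n R^\lambda g), \qquad U\phi(s,x) := \mathbf{E}\Big[\int_0^\infty e^{-\lambda u}\phi(s+u,X_u)\,du\Big],
\]
and then iterate. The estimate (\ref{WUSeqlemma430}) will follow separately, by a purely analytic induction that tracks the decomposition of $(B_nR^\lambda)^k g$ into its bounded and $L^q([0,T];L^p)$ components.

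For the one-step identity, I will first fix $g \in C_b^{1,2}(\hs)$: by Proposition \ref{WUSprop31}, $f:=R^\lambda g$ lies in $C_b^{1,2}(\hs)$ and solves $(\lambda-\partial_s-A)f = g$, and since $b^{(n)}$ is bounded and Lipschitz the SDE (\ref{WUSeqsdewithdriftbn}) has a unique strong solution $X$. Itô's formula applied to $e^{-\lambda t}f(s+t,X_t)$, combined with the observation that $(A + b^{(n)}(s+u,\cdot)\cdot\nabla)f = Af + (B_nR^\lambda g)(s+u,\cdot)$, lets me take expectations and pass to $t\to\infty$ (the local martingale part is a true martingale because $f$ and $\nabla f$ are bounded and the L\'{e}vy measure is integrable away from the origin, while $e^{-\lambda t}\|f\|\to 0$), giving the identity above. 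To reach the bounded Borel $g$ of the statement, I will pass to the limit via a monotone-class argument, using that $\nabla R^\lambda g$ is pointwise well-defined for any bounded Borel $g$ by differentiation under the integral sign, exactly as in the proof of Proposition \ref{WUSprop32}(ii). Writing $\phi^{(k)} := (B_n R^\lambda)^k g$, iteration then produces
\[
U g = \sum_{k=0}^{N-1} R^\lambda \phi^{(k)} + U \phi^{(N)}.
\]
For each fixed $n$ the drift $b^{(n)}$ is bounded, so an induction using $\|\nabla R^\lambda\phi^{(k-1)}\|_\infty<\infty$ shows each $\phi^{(k)}$ is a bounded Borel function; hence $|U\phi^{(N)}|\le\lambda^{-1}\|\phi^{(N)}\|_\infty\to 0$ as $N\to\infty$ (the rate may depend on $n$, which is immaterial for the limit), giving (\ref{eqressn}).

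For the uniform estimate, I will decompose $\phi^{(k)} = b^{(n)}_1\cdot\nabla R^\lambda \phi^{(k-1)} + b^{(n)}_2\cdot\nabla R^\lambda \phi^{(k-1)} =: \phi^{(k)}_1 + \phi^{(k)}_2$ and set $A_1^{(k)}:=\|\phi^{(k)}_1\|_{L^\infty([0,\infty);L^\infty(\Rd))}$, $A_2^{(k)}:=\|\phi^{(k)}_2\|_{L^q([0,T];L^p(\Rd))}$, with $A_1^{(0)}:=\|g\|$ and $A_2^{(0)}:=0$. Lemma \ref{WUSlemma34} and Proposition \ref{WUSprop32}(ii) applied to the split drive $\phi^{(k-1)}=\phi^{(k-1)}_1+\phi^{(k-1)}_2$ give $\|\nabla R^\lambda \phi^{(k-1)}\|_\infty \le L_\lambda A_1^{(k-1)} + M_\lambda A_2^{(k-1)}=:T_{k-1}$, while (\ref{WUSeqb1nlinftynorm})--(\ref{WUSeqb2nlpqnorm}) and H\"older yield $A_1^{(k)}\le MT_{k-1}$ and $A_2^{(k)}\le\|b_2\|_{L^q([0,T];L^p)}T_{k-1}$. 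Substituting into the definition of $T_k$ produces the geometric recursion $T_k \le \kappa_\lambda T_{k-1}$ and hence $T_{k-1}\le\kappa_\lambda^{k-1}L_\lambda\|g\|$, which is independent of $n$. Combining the trivial bound $\|R^\lambda \phi^{(k)}_1\|_\infty \le \lambda^{-1}A_1^{(k)}$ (coming from $\int_0^\infty e^{-\lambda t}\|p_t\|_{L^1}\,dt=\lambda^{-1}$) with $\|R^\lambda \phi^{(k)}_2\|_\infty\le N_\lambda A_2^{(k)}$ from Proposition \ref{WUSprop32}(i) yields exactly (\ref{WUSeqlemma430}). I expect the delicate point to be the monotone-class extension of the one-step identity together with the careful justification of Itô's formula for the L\'{e}vy-driven SDE (in particular that the jump martingale has finite second moments on $[0,t]$ after bounding the integrand by $\|\nabla f\|_\infty|z|\wedge 2\|f\|_\infty$), while the recursive bookkeeping leading to (\ref{WUSeqlemma430}) is routine once the split is set up.
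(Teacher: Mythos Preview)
Your proposal is correct and matches the paper's approach: the one-step identity from It\^o's formula combined with Proposition~\ref{WUSprop31}, the monotone-class extension to $g\in\CB_b(\hs)$, the iteration, and the geometric control obtained by splitting $b^{(n)}=b_1^{(n)}+b_2^{(n)}$ and invoking Proposition~\ref{WUSprop32} and Lemma~\ref{WUSlemma34} are exactly what the paper does (its Claim~\ref{WUSclaim1} is your recursion $T_k\le\kappa_\lambda T_{k-1}$). The only cosmetic point is that your assertion $\|\phi^{(N)}\|_\infty\to 0$ does not follow from mere boundedness of each $\phi^{(k)}$ but from that recursion (yielding $\|\phi^{(N)}\|_\infty\le \|b^{(n)}\|\,T_{N-1}\le 2n\,\kappa_\lambda^{N-1}L_\lambda\|g\|$), so establish the estimate first and then conclude the remainder vanishes, as the paper does.
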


\begin{proof}
For the existence and uniqueness of strong solutions to the SDE (\ref{WUSeqsdewithdriftbn}), the reader is referred to \cite[Theorem 9.1]{MR1011252} and \cite[Theorem 117]{MR2160585}.

For $\lambda>0$ and $f  \in \mathcal{B}_{b}(\hs)$, define
\[
V^{\lambda}_{n}f:=\mathbf{E}\Big[\int_{0}^{\infty}e^{-\lambda t}f(t+s,X_{t})dt\Big]. \]

Applying It\^{o}'s formula for $f\in C^{1,2}_b(\hs)$, we obtain
\begin{align*}& f(t+s,X_{t})-f(s,X_{0}) \\
=& ``Martingale"+\int_{0}^{t}(\frac{\partial f}{\partial u}+L^{(n)}_{u}f)(u+s,X_{u})du,
\end{align*}
where $L^{(n)}_{u}:=A+b^{(n)}(u,\cdot) \cdot \nabla$ for $u\ge0$. Taking expectations of both sides of the above equality gives
\begin{equation}\label{thmunieq0}
    \mathbf{E}[f(t+s,X_{t})]-f(s,x)= \mathbf{E}\Big[\int_{0}^{t}(\frac{\partial f}{\partial u}+L^{(n)}_{u}f)(u+s,X_{u})du\Big].
\end{equation}
Note that
\[
\mathbf{E}\Big[\int_{0}^{\infty} e^{-\lambda t}|b^{(n)}(t+s,X_{t})|dt\Big] <\infty.
\]
Multiplying both sides of (\ref{thmunieq0}) by $e^{-\lambda t}$, integrating with respect to $t$ from $0$ to $\infty$ and then applying Fubini's theorem, we get
\begin{align*}&  \mathbf{E}\Big[\int_{0}^{\infty} e^{-\lambda t}f(t+s,X_{t})dt\Big]\\
=&\frac{1}{\lambda}f(s,x)+  \mathbf{E}\Big[\int_{0}^{\infty} e^{-\lambda t}\int_{0}^{t}\big(\frac{\partial f}{\partial u}+L^{(n)}_{u}f\big)(u+s,X_{u})dudt\Big]\\
=& \frac{1}{\lambda}f(s,x)+\frac{1}{\lambda} \mathbf{E}\Big[\int_{0}^{\infty} e^{-\lambda u}\big(\frac{\partial f}{\partial u}+L^{(n)}_{u}f\big)(u+s,X_{u})du\Big].
\end{align*}
Therefore, for $f \in C^{1,2}_b(\hs)$,
\begin{equation}\label{thmunieq101}
  \lambda V_{n}^{\lambda}f=f(s,x)+V_{n}^{\lambda}\Big(\frac{\partial f}{\partial t}+L^{(n)}_{t}f\Big).
\end{equation}
Given $g \in C^{1,2}_{b}(\hs)$, it follows from Proposition \ref{WUSprop31} that $f:=R^{\lambda}g \in C^{1,2}_b(\hs)$ and $(\lambda -A-\frac{\partial }{\partial t})f=g$.  Substituting this $f$ in (\ref{thmunieq101}), we obtain
\begin{equation}\label{neweqvn0}
    V_{n}^{\lambda}g=R^{\lambda}g(s,x)+V_{n}^{\lambda}(B_nR^{\lambda}g)
\end{equation}
for  $g \in C^{1,2}_{b}(\hs)$. After a standard approximation procedure, the equality (\ref{neweqvn0}) holds for any bounded continuous function $g$ on $\hs$. For any open subset $O $ of $ \hs$, we can find $f_k \in C_b(\hs)$, $k \in \BN$, such that $ 0 \le f_k \uparrow \bfi_O$ as $k\to \infty$. It is easy to see that $R^{\lambda}f_k$ and $\nabla R^{\lambda}f_k$ converge boundedly and pointwise to $R^{\lambda}\bfi_O$ and $\nabla R^{\lambda}\bfi_O$, respectively. By dominated convergence theorem,  (\ref{neweqvn0}) holds for $g=\bfi_O$. Then we can use a  monotone class argument (see, for example, \cite[p.~4]{MR648601}) to extend  (\ref{neweqvn0}) to every $g \in \mathcal{B}_{b}(\hs)$.

Therefore, we have shown that
\begin{equation}\label{WUSeqlemma43vnl}
 V_{n}^{\lambda}f-R^{\lambda}f(s,x)=V_{n}^{\lambda}(B_{n}R^{\lambda}f), \quad f \in \mathcal{B}_{b}(\hs). \end{equation}
For any bounded measurable function $g$ on $\hs$, taking $f=B_{n}R^{\lambda}g$ in (\ref{WUSeqlemma43vnl}), we get
\[
     V_{n}^{\lambda}(B_{n}R^{\lambda}g)-R^{\lambda}B_{n}R^{\lambda}g(s,x)=V_{n}^{\lambda}(B_{n}R^{\lambda})^2 g \]
and thus
\begin{align*}
    V_{n}^{\lambda}g=& R^{\lambda}g(s,x)+V_{n}^{\lambda}(B_{n}R^{\lambda}g) \\
    =& R^{\lambda}g(s,x)+R^{\lambda}B_{n}R^{\lambda}g(s,x)+V_{n}^{\lambda}(B_{n}R^{\lambda})^{2}g.
\end{align*}
Similarly, after $i$ steps, we obtain
\begin{equation}\label{eqsnforn}
 V_{n}^{\lambda}g=\sum_{k=0}^{i}R^{\lambda}(B_{n}R^{\lambda})^{k}g(s,x)+V_{n}^{\lambda}(B_{n}R^{\lambda})^{i+1}g, \quad g \in \mathcal{B}_{b}(\hs).
\end{equation}
In order to show that the first term on the right-hand side of (\ref{eqsnforn}) converges  as $i \to \infty$, we first need to prove the following claim.
\begin{claim}\label{WUSclaim1}
Suppose that $g: \hs \to \BR$ is such that $\Vert \nabla R^{\lambda} g\Vert<\infty$. Then for each $k\in \mathbb{Z}_{+}$,
\begin{equation}\label{WUSeqlemma436}
\Vert\nabla R^{\lambda}(B_{n}R^{\lambda})^{k}g\Vert \leq \Vert \nabla R^{\lambda} g\Vert \big(\kappa_{\lambda}\big)^{k}
\end{equation}
and
\begin{equation}\label{WUSeqlemma438}
|(B_{n}R^{\lambda})^{k+1}g|\leq \Vert \nabla R^{\lambda} g\Vert \big(\kappa_{\lambda}\big)^{k}(|b^{(n)}_{1}|+|b^{(n)}_{2}|).
\end{equation}

\end{claim}

We prove Claim \ref{WUSclaim1} by induction. If $k=0$, then  (\ref{WUSeqlemma436}) is trivial and
\[
|B_{n}R^{\lambda}g|\leq  \Vert \nabla R^{\lambda} g\Vert |b^{(n)}|
\leq \Vert \nabla R^{\lambda} g\Vert (|b^{(n)}_{1}|+|b^{(n)}_{2}|).
\]
Suppose now that the above claim is true for $k.$ Note that (\ref{WUSeqsect41}), (\ref{WUSeqb1nlinftynorm}) and (\ref{WUSeqb2nlpqnorm}) hold. By Proposition \ref{WUSprop32} and Lemma \ref{WUSlemma34}, we get
\begin{align*}
 \Vert\nabla R^{\lambda}(B_{n}R^{\lambda})^{k+1}g\Vert =& \Vert\nabla R^{\lambda}(b^{(n)}\cdot \nabla R^{\lambda}(B_{n}R^{\lambda})^{k}g)\Vert  \\
 \le & \Vert\nabla R^{\lambda}(b^{(n)}_{1}\cdot \nabla R^{\lambda}(B_{n}R^{\lambda})^{k}g)\Vert+ \Vert\nabla R^{\lambda}(b^{(n)}_{2}\cdot \nabla R^{\lambda}(B_{n}R^{\lambda})^{k}g)\Vert \\
 \le & L_{\lambda} \Vert b^{(n)}_{1}\cdot \nabla R^{\lambda}(B_{n}R^{\lambda})^{k}g \Vert_{L^{\infty}([0,\infty);L^{\infty}(\mathbb{R}^{d}))} \\
 & \quad + M_{\lambda}\Vert b^{(n)}_{2}\cdot \nabla R^{\lambda}(B_{n}R^{\lambda})^{k}g\Vert_{L^q([0,T];L^p(\Rd))} \\
\leq & \Vert \nabla R^{\lambda} g\Vert \big(\kappa_{\lambda}\big)^{k}\big( L_{\lambda}M+M_{\lambda}\|b^{(n)}_{2}\|_{L^q([0,T];L^p(\Rd))}\big) \\
\leq & \Vert \nabla R^{\lambda} g\Vert \big(\kappa_{\lambda}\big)^{k}\big( L_{\lambda}M+M_{\lambda}\|b_2\|_{L^q([0,T];L^p(\Rd))}\big) \\
=& \Vert \nabla R^{\lambda} g\Vert \big(\kappa_{\lambda}\big)^{k+1}
\end{align*}
and
\[
|(B_{n}R^{\lambda})^{k+2}g|\leq  \Vert \nabla R^{\lambda} g\Vert \big(\kappa_{\lambda}\big)^{k+1}(|b^{(n)}_{1}|+|b^{(n)}_{2}|).
\]
Thus the claim is also true for $k+1$. Hence Claim \ref{WUSclaim1} is true for any $k \in \BZ_+$.

Note that $\|R^{\lambda}f\| \le \lambda^{-1}\|f\|_{L^{\infty}(\BR_+;L^{\infty}(\Rd))}$ for all $f \in L^{\infty}(\BR_+;L^{\infty}(\Rd))$. By  (\ref{WUSeqlemma438}) and Proposition \ref{WUSprop32}, we obtain
\begin{align}
\|R^{\lambda}(B_{n}R^{\lambda})^{k}g\| \le & \Vert \nabla R^{\lambda} g\Vert \big(\kappa_{\lambda}\big)^{k-1} R^{\lambda}(|b^{(n)}_{1}|+|b^{(n)}_{2}|) \notag \\
\label{WUSeqlemma439} \leq &  \Vert \nabla R^{\lambda} g\Vert \big(\kappa_{\lambda}\big)^{k-1}\big(M\lambda^{-1}+N_{\lambda}\|b_2\|_{L^q([0,T];L^p(\Rd))}\big).
\end{align}
If $g \in \mathcal{B}_{b}(\hs)$, then $\Vert\nabla R^{\lambda}g\Vert \leq L_{\lambda}\Vert g\Vert $ by Lemma \ref{WUSlemma34}. So the inequality  (\ref{WUSeqlemma430}) is proved. By (\ref{WUSeqlemma437}) and (\ref{WUSeqlemma430}), we see that the series $ \sum_{k=0}^{\infty}R^{\lambda}(B_{n}R^{\lambda})^{k}g$ converges uniformly on $\mathbb{R}_{+}\times \mathbb{R}^{d}$ for any $\lambda>\lambda_{0}$ and  $g\in \mathcal{B}_{b}(\mathbb{R}_{+}\times \mathbb{R}^{d})$.

Finally, we show that the second term on the right-hand side of (\ref{eqsnforn}) converges to 0 as $i \to \infty$. Note that $|b^{(n)}_{1}|$ and $|b^{(n)}_{2}|$ are both bounded by $n$. According to Claim \ref{WUSclaim1},  we have, for any $\lambda>\lambda_{0}$ and $g \in \mathcal{B}_{b}(\hs)$,
\begin{align*}
 |V_{n}^{\lambda}(B_{n}R^{\lambda})^{i+1}g|
\leq& \Vert \nabla R^{\lambda} g\Vert \big(\kappa_{\lambda}\big)^{i}V_{n}^{\lambda}(|b^{(n)}_{1}|+|b^{(n)}_{2}|) \\
\leq & 2n \lambda^{-1}L_{\lambda}\Vert g\Vert \big(\kappa_{\lambda}\big)^{i},
\end{align*}
which converges to $0$ as $ i\rightarrow\infty$. Now, the equality (\ref{eqressn}) follows from (\ref{WUSeqlemma430}) and (\ref{eqsnforn}). This completes the proof.
\end{proof}

In view of (\ref{WUSeqlemma430}),  we can define an operator $G^{\lambda}_{n}$ on $\mathcal{B}_{b}(\hs)$ as
\begin{equation}\label{WUSeqdefiforGlambdan1}
    G_{n}^{\lambda}g=\sum_{k=0}^{\infty}R^{\lambda}(B_{n}R^{\lambda})^{k}g, \quad g  \in \mathcal{B}_{b}(\hs),
\end{equation}
provided that $\lambda> \lambda_0$. In the next lemma we study the limiting behavior of $G_{n}^{\lambda}$ as $n\to\infty$.
\begin{lemma}\label{WUSlemma44}
Let $\lambda>\lambda_{0}$. For $g\in \mathcal{B}_{b}(\mathbb{R}_{+}\times \mathbb{R}^{d})$, define
\begin{equation}\label{WUSeqlemma440}
G^{\lambda}g:= \sum_{k=0}^{\infty}R^{\lambda}(BR^{\lambda})^{k}g.
\end{equation}
(i) Then the series on the right-hand side of (\ref{WUSeqlemma440}) converges uniformly on $\mathbb{R}_{+}\times \mathbb{R}^{d}$ for any $g\in \mathcal{B}_{b}(\mathbb{R}_{+}\times \mathbb{R}^{d})$. \\

\noindent (ii) For each $g\in \mathcal{B}_{b}(\mathbb{R}_{+}\times\mathbb{R}^{d}),\ G_{n}^{\lambda}g$ converges locally uniformly  to $G^{\lambda}g$ as $ n\rightarrow\infty$, that is, for any compact $K\subset \hs$,
\[
\lim_{n\to\infty}\sup_{(s,x)\in K}|\ G_{n}^{\lambda}g(s,x)-\ G^{\lambda}g(s,x)|=0.
\]
\end{lemma}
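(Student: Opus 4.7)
The argument mirrors Claim~\ref{WUSclaim1} in the proof of Lemma~\ref{newlemmasn}, with $B$ replacing $B_n$. By induction on $k$, using Proposition~\ref{WUSprop32} and Lemma~\ref{WUSlemma34} applied respectively to $b_1\in L^\infty([0,T];L^\infty(\Rd))$ (with $\|b_1\|\le M$) and $b_2\in L^q([0,T];L^p(\Rd))$, together with $\kappa_\lambda<1$ from (\ref{WUSeqlemma437}), one obtains $\|\nabla R^\lambda(BR^\lambda)^k g\|\le\|\nabla R^\lambda g\|\kappa_\lambda^k$ and $|(BR^\lambda)^{k+1}g|\le\|\nabla R^\lambda g\|\kappa_\lambda^k(|b_1|+|b_2|)$. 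Combining the second estimate with Proposition~\ref{WUSprop32}(i) applied to $R^\lambda|b_2|$ (and the trivial $R^\lambda|b_1|\le M/\lambda$) yields $\|R^\lambda(BR^\lambda)^k g\|\le\|\nabla R^\lambda g\|\kappa_\lambda^{k-1}(M/\lambda+N_\lambda\|b_2\|_{L^qL^p})$, so the series converges uniformly on $\hs$.

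\textbf{Part (ii), reduction via a resolvent identity.} Rearranging both defining series termwise, and interchanging $\nabla$ with the uniformly convergent sums (justified by Claim~\ref{WUSclaim1} and part~(i)), produces
\[
G_n^\lambda g=R^\lambda g+R^\lambda\bigl(b^{(n)}\cdot\nabla G_n^\lambda g\bigr),\qquad G^\lambda g=R^\lambda g+R^\lambda\bigl(b\cdot\nabla G^\lambda g\bigr).
\]
Setting $\Delta_n:=G_n^\lambda g-G^\lambda g$ and $\phi_n:=(b^{(n)}-b)\cdot\nabla G_n^\lambda g$, subtraction gives $\Delta_n=R^\lambda\phi_n+R^\lambda(b\cdot\nabla\Delta_n)$. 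Iterating $N$ times and using part~(i) together with the uniform boundedness of $\nabla\Delta_n$ (which follows from part~(i)) to kill the remainder $(R^\lambda B)^N\Delta_n$, one obtains $\Delta_n=\sum_{k=0}^\infty R^\lambda(BR^\lambda)^k\phi_n$, whose tails are uniformly small in $n$ by part~(i) once $\|\nabla R^\lambda\phi_n\|$ is controlled. It therefore suffices to show, for each fixed $k$, that $R^\lambda(BR^\lambda)^k\phi_n\to 0$ locally uniformly.

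\textbf{Splitting $\phi_n$; the easy part and the main obstacle.} Decompose $\phi_n=\phi_n^{(1)}+\phi_n^{(2)}$ along $b=b_1+b_2$. For $\phi_n^{(2)}:=(b_2^{(n)}-b_2)\cdot\nabla G_n^\lambda g$, Remark~\ref{WUSremark42} and the uniform bound on $\|\nabla G_n^\lambda g\|$ yield $\|\phi_n^{(2)}\|_{L^qL^p}\to 0$; Proposition~\ref{WUSprop32}(ii) then gives $\|\nabla R^\lambda\phi_n^{(2)}\|\to 0$, whence the Claim~\ref{WUSclaim1}-type bounds deliver $\|R^\lambda(BR^\lambda)^k\phi_n^{(2)}\|\to 0$ for every $k$, in $L^\infty(\hs)$. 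The \emph{main obstacle} is $\phi_n^{(1)}:=(b_1^{(n)}-b_1)\cdot\nabla G_n^\lambda g$: it is uniformly bounded by $2M\|\nabla G_n^\lambda g\|$ but converges to $0$ only almost everywhere, since the mollified $b_1^{(n)}$ need not approach $b_1$ in $L^\infty$, so no operator-norm argument applies directly.

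\textbf{Tail/main-part estimate.} I resolve the obstacle via the recursion $u_{k+1}=R^\lambda(b\cdot\nabla u_k)$ with $u_0=R^\lambda\phi_n^{(1)}$, showing by induction on $k$ that both $u_k$ and $\nabla u_k$ are uniformly bounded on $\hs$ and converge to $0$ locally uniformly. For the base case, fix a compact $K=K_s\times K_x\subset\hs$ and $\varepsilon>0$. The stable heavy-tail estimate $\int_{|z|>r}p_t(z)\,dz=O(tr^{-\alpha})$, derivable from the scaling (\ref{WUSsect315}) and the standard tail of $p_1$, allows me to truncate to $\{|y|\le R\}$ with an error smaller than $\varepsilon/2$ uniformly on $K$ for $R$ large; on the truncated region, H\"older in space-time with auxiliary exponents satisfying $d/p+\alpha/q<\alpha$ bounds the remainder by $C\|b_1^{(n)}-b_1\|_{L^q([0,2T];L^p(\{|y|\le R\}))}$, which tends to $0$ by dominated convergence on the compact set $[0,2T]\times\{|y|\le R\}$ (the integrand is bounded by $2M$ and converges pointwise a.e.\ to $0$). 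An analogous argument using $\nabla_x p_t$, together with the scaling $\|\nabla p_t\|_{p^*}=Ct^{-(d+1)/\alpha+d/(\alpha p^*)}$ established in the proof of Proposition~\ref{WUSprop32}(ii), handles $\nabla u_0$. The inductive step is structurally identical: writing $u_{k+1}=R^\lambda(b_1\cdot\nabla u_k)+R^\lambda(b_2\cdot\nabla u_k)$, the same tail/main-part decomposition—with $\nabla u_k$ playing the role of $b_1^{(n)}-b_1$, an $L^\infty$ bound $\|b_1\|\le M$ for the first piece and an $L^qL^p$ H\"older estimate for the second—yields both $u_{k+1}\to 0$ and $\nabla u_{k+1}\to 0$ locally uniformly. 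This closes the induction and proves~(ii).
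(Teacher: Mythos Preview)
Your proof is correct, but it is organized differently from the paper's. The paper proves part~(ii) by showing directly, via induction on $k$, that each term $R^\lambda(B_nR^\lambda)^k g$ converges locally uniformly to $R^\lambda(BR^\lambda)^k g$ (and likewise for the gradients); the uniform-in-$n$ tail bounds (\ref{WUSeqlemma430}) and (\ref{WUSeqlemma442}) then let one pass to the full series. You instead derive the resolvent identity $\Delta_n=\sum_k R^\lambda(BR^\lambda)^k\phi_n$ with $\phi_n=(b^{(n)}-b)\cdot\nabla G_n^\lambda g$, thereby isolating the error source in a single function and keeping the operator $BR^\lambda$ fixed throughout. This is a cleaner decomposition conceptually, but the induction you then run on the $u_k$ is structurally the same work the paper does in its Claim~\ref{WUSclaim2}: both hinge on a spatial compact/tail split to upgrade almost-everywhere convergence of the bounded piece $b_1^{(n)}-b_1$ (or, in your induction step, of $\nabla u_k$) to locally uniform convergence after applying $R^\lambda$.

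One remark: you invoke the quantitative tail bound $\int_{|z|>r}p_t(z)\,dz=O(tr^{-\alpha})$ as ``standard.'' While this is indeed true for general non-degenerate stable laws, the paper deliberately avoids pointwise heat-kernel estimates (see the discussion in the introduction), and its proof handles the tail via plain dominated convergence: for $x$ in a compact set and $m$ large, $\int_0^\infty\!\!\int_{|y|>m}e^{-\lambda t}p_t(y-x)\,dy\,dt\le\int_0^\infty\!\!\int_{|y'|>m/2}e^{-\lambda t}p_t(y')\,dy'\,dt\to 0$ as $m\to\infty$, with integrable dominant $e^{-\lambda t}p_t(y')$. The analogous argument for $\nabla R^\lambda$ uses the integrable dominant $e^{-\lambda t}|\nabla p_t(y')|$, since $\|\nabla p_t\|_{L^1}=Ct^{-1/\alpha}$ by scaling. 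Substituting this DCT argument for your $O(tr^{-\alpha})$ keeps your proof self-contained within the paper's framework.
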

\begin{proof}
(i) Let $g\in \mathcal{B}_{b}(\mathbb{R}_{+}\times \mathbb{R}^{d})$. With the same argument that we used to establish (\ref{WUSeqlemma436}) and (\ref{WUSeqlemma439}), we conclude that, for each $k \in \BN$,
\begin{equation}\label{WUSeqlemma441}
\Vert\nabla R^{\lambda}(BR^{\lambda})^{k}g\Vert \leq \Vert\nabla R^{\lambda}g\Vert \big(\kappa_{\lambda}\big)^{k}
\end{equation}
and
\begin{equation}\label{WUSeqlemma442}
 \Vert R^{\lambda}(BR^{\lambda})^{k}g\Vert
 \leq  \Vert \nabla R^{\lambda} g\Vert \big(\kappa_{\lambda}\big)^{k-1}\big(M\lambda^{-1}+N_{\lambda}\|b_2\|_{L^q([0,T];L^p(\Rd))}\big).
\end{equation}
As before, we have $\Vert\nabla R^{\lambda}g\Vert \leq L_{\lambda}\Vert g\Vert $ by Lemma \ref{WUSlemma34}. Noting (\ref{WUSeqlemma437}), we see that the series $ \sum_{k=0}^{\infty}R^{\lambda}(BR^{\lambda})^{k}g$ converges uniformly on $\mathbb{R}_{+}\times \mathbb{R}^{d}$ for any $\lambda>\lambda_{0}$.

(ii) Suppose $g\in \mathcal{B}_{b}(\mathbb{R}_{+}\times \mathbb{R}^{d})$. By (\ref{WUSeqlemma437}) and the estimates  (\ref{WUSeqlemma430}) and (\ref{WUSeqlemma442}), we only need to show the following claim.
\begin{claim}\label{WUSclaim2}
For any fixed $k\in \BZ_+$ and  compact $K\subset \hs$,
\begin{equation}\label{WUSeqlemma443}
\lim_{n\rightarrow\infty}\sup_{(s,x)\in K}|R^{\lambda}(B_{n}R^{\lambda})^{k}g(s,x)-R^{\lambda}(BR^{\lambda})^{k}g(s,x)|=0
\end{equation}
and
\begin{equation}\label{WUSeqlemma444}
\lim_{n\rightarrow\infty}\sup_{(s,x)\in K}|\nabla R^{\lambda}(B_{n}R^{\lambda})^{k}g(s,x) -\nabla R^{\lambda}(BR^{\lambda})^{k}g(s,x)|=0.
\end{equation}
\end{claim}

We prove Claim \ref{WUSclaim2} by induction. If $k=0$, then (\ref{WUSeqlemma443}) and (\ref{WUSeqlemma444}) are trivially true. Suppose that the above claim is true for $k$. For $m>0$ let
\begin{equation}\label{WUSdefiofAm}
A_m:=\{x \in \Rd: |x|\le m\}.
\end{equation}
Define $h_m: \hs \to \BR$ by
\begin{equation}\label{WUSdefiofhm}
h_m(t,y):=\mathbf{1}_{A_{m}}(y),  \quad (t,y) \in \hs.
\end{equation}
Now set
\[
C_{n,m}:=\sup_{(s,x)\in [0,T]\times A_m}|\nabla R^{\lambda}(B_{n}R^{\lambda})^{k}g(s,x) -\nabla R^{\lambda}(BR^{\lambda})^{k}g(s,x)|.
\]
By induction hypothesis,  $\lim_{n\to \infty}C_{n,m}=0$ for any $m>0$.

Since the support of $b^{(n)}$ and $b$ are both contained in $[0,T]\times \Rd$, it follows that
\[
\nabla R^{\lambda}(B_{n}R^{\lambda})^{k}g(s,x)=\nabla R^{\lambda}(BR^{\lambda})^{k}g(s,x)=0, \qquad \forall s>T, x \in \Rd.
\]
By (\ref{WUSeqlemma436}) and (\ref{WUSeqlemma441}), we have
\begin{align*}
&|(B_{n}R^{\lambda})^{k+1}g-(BR^{\lambda})^{k+1}g|\\
\le &|(B_{n}R^{\lambda})^{k+1}g-B_{n}R^{\lambda}(BR^{\lambda})^{k}g|+|B_{n}R^{\lambda}(BR^{\lambda})^{k}g-(BR^{\lambda})^{k+1}g|\\
\leq &|\nabla R^{\lambda}(B_{n}R^{\lambda})^{k}g-\nabla R^{\lambda}(BR^{\lambda})^{k}g||b^{(n)}|+\Vert\nabla R^{\lambda}(BR^{\lambda})^{k}g\Vert |b^{(n)}-b| \\
= &|\nabla R^{\lambda}(B_{n}R^{\lambda})^{k}g-\nabla R^{\lambda}(BR^{\lambda})^{k}g||b^{(n)}|(h_m+(1-h_m))\\
& \quad +\Vert\nabla R^{\lambda}(BR^{\lambda})^{k}g\Vert |b^{(n)}-b| \\
 \le & C_{n,m}|b^{(n)}|+ 2C|b^{(n)}|(1-h_m)+ C|b^{(n)}-b|,\\
\end{align*}
where
\[
C:=\Vert\nabla R^{\lambda}g\Vert  \big(\kappa_{\lambda}\big)^{k} \le L_{\lambda}\Vert g\Vert \big(\kappa_{\lambda}\big)^{k}<\infty
\]
is a constant. Therefore,
\begin{align}
& |R^{\lambda}(B_{n}R^{\lambda})^{k+1}g-R^{\lambda}(BR^{\lambda})^{k+1}g| \notag \\
\le & C_{n,m} R^{\lambda}(|b^{(n)}|)+ 2C R^{\lambda}(|b^{(n)}|(1-h_m)) +C   R^{\lambda}(|b^{(n)}-b|) \notag \\
\label{WUSeqlemma4441}\le & C_{n,m} \|R^{\lambda}(|b^{(n)}|)\|+ 2C R^{\lambda}(|b^{(n)}|(1-h_m)) +C   R^{\lambda}(|b^{(n)}-b|).
\end{align}
By (\ref{WUSeqb1nlinftynorm}), (\ref{WUSeqb2nlpqnorm}) and Proposition \ref{WUSprop32}, we have
\begin{align}
\sup_{n\in\BN}\|R^{\lambda}(|b^{(n)}|)\| \le & \sup_{n\in\BN}\|R^{\lambda}(|b^{(n)}_{1}|+|b^{(n)}_{2}|)\| \notag \\
\le &  \sup_{n\in\BN} \big(\lambda^{-1}M+N_{\lambda}\|b^{(n)}_{2}\|_{L^q([0,T];L^p(\Rd))}\big) \notag \\
\label{WUSeqsuprlambdabn}\le &  \lambda^{-1}M+N_{\lambda}\|b_{2}\|_{L^q([0,T];L^p(\Rd))}< \infty,
\end{align}
which implies
\begin{equation}\label{WUSeqlemma4442}
\lim_{n\to\infty} C_{n,m} \|R^{\lambda}(|b^{(n)}|)\|=0, \quad \forall  m>0.
\end{equation}
Similarly,
\begin{align}
R^{\lambda}(|b^{(n)}-b|)
\le & R^{\lambda}(|b^{(n)}_{1}-b_{1}|+|b^{(n)}_{2}-b_{2}|)\notag\\
\label{WUSeqlemma445}\leq & R^{\lambda}(|b^{(n)}_{1}-b_{1}|)+N_{\lambda}\Vert b^{(n)}_{2}-b_{2}\Vert_{L^q([0,T];L^p(\Rd))}.
\end{align}
For $(s,x) \in \hs $,
\begin{align}
&R^{\lambda}(|b^{(n)}_{1}-b_{1}|)(s,x)\notag \\
=& R^{\lambda}(|b^{(n)}_{1}-b_{1}|h_m)(s,x) + R^{\lambda}(|b^{(n)}_{1}-b_{1}|(1-h_m))(s,x) \notag \\
\le & N_{\lambda}\|(|b^{(n)}_{1}-b_{1}|)h_m\|_{L^q([0,T];L^p(\Rd))}+2M\int_{0}^{\infty}\int_{\{|y|> m\}}e^{-\lambda t}p_{t}(y-x) dydt \notag \\
\label{WUSeqlemma44505}=&:I_{n,m}+J_{m}(x).
\end{align}
Similarly to (\ref{WUSeqremark413}), for any fixed $m>0$,
\begin{equation}\label{WUSeqlemma44convofinm}
\lim_{n\to\infty} I_{n,m}=0, \quad \forall  m>0.
\end{equation}
If $(s,x)$ is in the compact set $K$ and $m>0$ is sufficiently large, then
\begin{align*}
J_{m}(x) = & 2M\int_{0}^{\infty}\int_{\{|y'+x|> m\}}e^{-\lambda t}p_{t}(y') dy'dt \\
\le & 2M\int_{0}^{\infty}\int_{\{|y'|> m/2\}}e^{-\lambda t}p_{t}(y') dy'dt.
\end{align*}
By dominated convergence theorem,
 \begin{equation}\label{WUSeqlemma4451}
 \lim_{m \to \infty} \sup_{(s,x)\in K} J_{m}(x)=0.
 \end{equation}
In view of (\ref{WUSeqlemma44505}), (\ref{WUSeqlemma44convofinm}) and (\ref{WUSeqlemma4451}), we can use a simple ``$\epsilon - \delta$'' argument to obtain
\begin{equation}\label{WUSeqlemma446}
\lim_{n\rightarrow\infty}\sup_{(s,x)\in K}R^{\lambda}(|b^{(n)}_{1}-b_{1}|)(s,x)=0.
\end{equation}
It follows from  (\ref{WUSeqremark413}), (\ref{WUSeqlemma445}) and (\ref{WUSeqlemma446}) that
\begin{equation}\label{WUSeqlemma4461}
\lim_{n\rightarrow\infty}\sup_{(s,x)\in K}R^{\lambda}(|b^{(n)}-b|)(s,x)=0.
\end{equation}

We now turn to treat the second term on the right-hand side of (\ref{WUSeqlemma4441}).   For $(s,x)\in K$ and  sufficiently large $m>0$, we have
\begin{align*}
&R^{\lambda}(|b^{(n)}|(1-h_m))(s,x) \\
\le & R^{\lambda}(|b^{(n)}_{1}|(1-h_m))(s,x)+R^{\lambda}(|b^{(n)}_{2}|(1-h_m))(s,x) \\
\le & M\int_{0}^{\infty}\int_{\{|y|> m\}}e^{-\lambda t}p_{t}(y-x) dydt \\
&\quad +\int_{0}^{\infty}e^{-\lambda t} \|p_{t}(\cdot-x)(1-h_m)\|_{L^{p^*}(\Rd)}\|b^{(n)}_{2}(s+t,\cdot)\|_{L^{p}(\Rd)}dt \\
\le & \frac{1}{2}J_m(x) +\int_{0}^{\infty}e^{-\lambda t} \|(1-h_{m/2})p_{t} \|_{L^{p^*}(\Rd)}\|b_{2}(s+t,\cdot)\|_{L^{p}(\Rd)}dt \\
\le & \frac{1}{2}J_m(x) +\int_{0}^{T} \|(1-h_{m/2})p_{t} \|_{L^{p^*}(\Rd)}\|b_{2}(s+t,\cdot)\|_{L^{p}(\Rd)}dt \\
\le & \frac{1}{2}J_m(x) + \|(1-h_{m/2})p_{t} \|_{L^{q^*}([0,T];L^{p^*}(\Rd))}   \|b_{2}\|_{L^q([0,T];L^p(\Rd))}.
\end{align*}
By (\ref{WUSeqlemma4451}) and dominated convergence theorem, we see that
\begin{equation}\label{WUSeqlemma447}
\lim_{m\rightarrow\infty}\sup_{\substack{(s,x)\in K \\ n \in \BN}}R^{\lambda}(|b^{(n)}|(1-h_m))(s,x)=0.
\end{equation}
For any $\epsilon>0$, we can find large enough $m_0>0$ such that
\[
\sup_{\substack{(s,x)\in K \\ n \in \BN}} R^{\lambda}(|b^{(n)}|(1-h_{m_0}))(s,x) <\frac{\epsilon}{6 C}.
\]
By (\ref{WUSeqlemma4442}) and (\ref{WUSeqlemma4461}), there exists $n_0 \in \BN$ such that, for $n \ge n_0$,
\[
C_{n,m_0} \|R^{\lambda}(|b^{(n)}|)\| <\frac{\epsilon}{3} \quad \mbox{and} \quad \sup_{(s,x)\in K}R^{\lambda}(|b^{(n)}-b|)(s,x) <\frac{\epsilon}{3C}. \]
It follows from (\ref{WUSeqlemma4441}) that
\[
\sup_{(s,x)\in K} |R^{\lambda}(B_{n}R^{\lambda})^{k+1}g(s,x)-R^{\lambda}(BR^{\lambda})^{k+1}g(s,x)| <\epsilon, \qquad \forall n \ge n_0,
\]
which shows (\ref{WUSeqlemma443}) for $k+1$.

Similarly, we can show that  (\ref{WUSeqlemma444}) is also true for $k+1.$ Hence Claim \ref{WUSclaim2} is true for any $k \in \BZ_+$. This completes the proof.
\end{proof}

\begin{remark}\label{WUSremarkGlambdab}
By (\ref{WUSeqlemma442}), Proposition \ref{WUSprop32} and Lemma \ref{WUSlemma34}, we have
\begin{align*}
\Vert R^{\lambda}(BR^{\lambda})^{k}(|b|)\Vert
 \leq &  \Vert \nabla R^{\lambda} (|b|)\Vert \big(\kappa_{\lambda}\big)^{k-1}\big(M\lambda^{-1}+N_{\lambda}\|b_2\|_{L^q([0,T];L^p(\Rd))}\big) \\
 =&  \Vert \nabla R^{\lambda} (|b_1+b_2|)\Vert \big(\kappa_{\lambda}\big)^{k-1}\big(M\lambda^{-1}+N_{\lambda}\|b_2\|_{L^q([0,T];L^p(\Rd))}\big) \\
 \le &  \big(\kappa_{\lambda}\big)^{k}\big(M\lambda^{-1}+N_{\lambda}\|b_2\|_{L^q([0,T];L^p(\Rd))}\big).
 \end{align*}
 For $\lambda>\lambda_0$, $G^{\lambda}(|b|):=\sum_{k=0}^{\infty}  R^{\lambda}(BR^{\lambda})^{k}(|b|)$ is thus well-defined and
 \[
 \| G^{\lambda}(|b|) \| \le \sum_{k=0}^{\infty}\| R^{\lambda}(BR^{\lambda})^{k}(|b|)\| <\infty.
 \]

\end{remark}


\begin{lemma}\label{WUSlemma45}As $\lambda \to \infty$, $G_{n}^{\lambda}(|b^{(n)}|)(s,x)$ converges to $0$ uniformly in $(s,x) \in \hs$ and $n\in \BN$.
\end{lemma}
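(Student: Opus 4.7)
The plan is to bound $G_n^{\lambda}(|b^{(n)}|)$ by a geometric series whose terms are all controlled uniformly in $n$, and then send $\lambda\to\infty$ using $\kappa_{\lambda}\downarrow 0$, $N_{\lambda}\downarrow 0$ and $M\lambda^{-1}\downarrow 0$. The key point is that we cannot simply apply the estimate (\ref{WUSeqlemma430}) to $g=|b^{(n)}|$, since $\Vert |b^{(n)}|\Vert\le 2n$ is not uniform in $n$. Instead, we must exploit the decomposition $|b^{(n)}|\le|b_1^{(n)}|+|b_2^{(n)}|$, together with (\ref{WUSeqb1nlinftynorm}) and (\ref{WUSeqb2nlpqnorm}), to get $n$-free estimates.

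First I would establish the two base estimates. Applying Proposition \ref{WUSprop32}(i) and the trivial bound $\Vert R^{\lambda}h\Vert\le \lambda^{-1}\Vert h\Vert_{L^{\infty}(\BR_{+};L^{\infty}(\Rd))}$, just as in (\ref{WUSeqsuprlambdabn}),
\[
\Vert R^{\lambda}(|b^{(n)}|)\Vert \le M\lambda^{-1}+N_{\lambda}\Vert b_{2}\Vert_{L^{q}([0,T];L^{p}(\Rd))}=:\eta_{\lambda}.
\]
Likewise, Proposition \ref{WUSprop32}(ii) and Lemma \ref{WUSlemma34} give
\[
\Vert \nabla R^{\lambda}(|b^{(n)}|)\Vert \le L_{\lambda}M+M_{\lambda}\Vert b_{2}\Vert_{L^{q}([0,T];L^{p}(\Rd))}=\kappa_{\lambda},
\]
both bounds being independent of $n$.

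Next I would run the induction behind Claim \ref{WUSclaim1}, but with $g$ replaced by $|b^{(n)}|$ and using the above $n$-uniform base bound. The estimate (\ref{WUSeqlemma436}) is purely a consequence of (\ref{WUSeqb1nlinftynorm})–(\ref{WUSeqb2nlpqnorm}) applied iteratively, so the same induction yields $\Vert\nabla R^{\lambda}(B_{n}R^{\lambda})^{k}(|b^{(n)}|)\Vert\le\kappa_{\lambda}^{k+1}$ for $k\in\BZ_{+}$, and then (\ref{WUSeqlemma438}) gives
\[
|(B_{n}R^{\lambda})^{k+1}(|b^{(n)}|)|\le \kappa_{\lambda}^{k+1}\bigl(|b_{1}^{(n)}|+|b_{2}^{(n)}|\bigr).
\]
Applying $R^{\lambda}$ and invoking the base estimate once more produces
\[
\Vert R^{\lambda}(B_{n}R^{\lambda})^{k}(|b^{(n)}|)\Vert \le \kappa_{\lambda}^{k}\,\eta_{\lambda},\qquad k\in\BZ_{+},
\]
a bound uniform in $n$.

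Finally, summing over $k$, since $\kappa_{\lambda}<1$ for $\lambda>\lambda_{0}$,
\[
\Vert G_{n}^{\lambda}(|b^{(n)}|)\Vert \le \sum_{k=0}^{\infty}\kappa_{\lambda}^{k}\eta_{\lambda}=\frac{\eta_{\lambda}}{1-\kappa_{\lambda}}.
\]
The right-hand side is independent of $(s,x)$ and $n$, and tends to $0$ as $\lambda\to\infty$ because $\eta_{\lambda}\to 0$ (by $N_{\lambda}\downarrow 0$ from Proposition \ref{WUSprop32}(i) and $M\lambda^{-1}\to 0$) while $\kappa_{\lambda}\to 0$ by (\ref{WUSeqlemma437}). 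This gives the claim. There is no real obstacle here beyond being careful to avoid the non-uniform bound $\Vert |b^{(n)}|\Vert\le 2n$ and to replace it by the split-norm bounds (\ref{WUSeqb1nlinftynorm}) and (\ref{WUSeqb2nlpqnorm}) at every step of the induction.
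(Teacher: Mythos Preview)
Your proof is correct and follows essentially the same route as the paper: replace the crude bound $\Vert |b^{(n)}|\Vert\le 2n$ by the split $|b^{(n)}|\le|b_1^{(n)}|+|b_2^{(n)}|$, use (\ref{WUSeqb1nlinftynorm})--(\ref{WUSeqb2nlpqnorm}) to get the $n$-free base estimate $\Vert\nabla R^{\lambda}(|b^{(n)}|)\Vert\le\kappa_{\lambda}$, feed this into Claim~\ref{WUSclaim1}, and sum the resulting geometric series to arrive at $\eta_{\lambda}/(1-\kappa_{\lambda})$, which is exactly the paper's bound (\ref{WUSeqlemma451}). One tiny remark: (\ref{WUSeqlemma437}) only asserts $\kappa_{\lambda}<1$, not $\kappa_{\lambda}\to 0$; the latter follows from $L_{\lambda},M_{\lambda}\downarrow 0$, though for the conclusion you really only need $(1-\kappa_{\lambda})^{-1}$ bounded together with $\eta_{\lambda}\to 0$.
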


\begin{proof}
Let $m,n \in \BN$ and $\lambda>\lambda_0$. Since $|b^{(m)}|$ is bounded, by (\ref{WUSeqdefiforGlambdan1}),
\[
G_{n}^{\lambda}(|b^{(m)}|)=\sum_{k=0}^{\infty}R^{\lambda}(B_{n}R^{\lambda})^{k}(|b^{(m)}|).
\]
Since $|b^{(m)} | \le |b^{(m)}_{1}|+|b^{(m)}_{2}|$, by (\ref{WUSeqlemma438}), Proposition \ref{WUSprop32} and Lemma \ref{WUSlemma34}, for each $k\in \BZ_+$,
\begin{align*}
|(B_{n}R^{\lambda})^{k+1}(|b^{(m)}|)|\leq &\|\nabla R^{\lambda}(|b^{(m)}|)\|(\kappa_{\lambda})^{k}(|b^{(n)}_{1}|+|b^{(n)}_{2}|) \\
\le & (\kappa_{\lambda})^{k+1}(|b^{(n)}_{1}|+|b^{(n)}_{2}|).
\end{align*}
Therefore,
\begin{align}
    G_{n}^{\lambda}(|b^{(m)}|)\le & R^{\lambda} (|b^{(m)}|)+ \textstyle \sum_{k=1}^{\infty}(\kappa_{\lambda})^{k} R^{\lambda} (|b^{(n)}_{1}|+|b^{(n)}_{2}|)  \notag \\
\le & R^{\lambda} (|b^{(m)}_{1}|+|b^{(m)}_{2}|)+ \textstyle \sum_{k=1}^{\infty}(\kappa_{\lambda})^{k}\big(M\lambda^{-1}+N_{\lambda}\|b_2\|_{L^q([0,T];L^p(\Rd))}\big) \notag \\
\le & (M\lambda^{-1}+N_{\lambda}\|b_2\|_{L^q([0,T];L^p(\Rd))})\textstyle \sum_{k=0}^{\infty}(\kappa_{\lambda})^{k} \notag\\
\label{WUSeqlemma451}= & (M\lambda^{-1}+N_{\lambda}\|b_2\|_{L^q([0,T];L^p(\Rd))})(1-\kappa_{\lambda})^{-1}.
\end{align}
Since $N_{\lambda} \downarrow 0$ as $\lambda \to \infty$, it follows that $G_{n}^{\lambda}(|b^{(n)}|)(s,x)$ converges to 0 uniformly in $(s,x) \in \hs$ and $n\in \BN$ as $\lambda \to \infty$.
\end{proof}

We are now ready to prove the local weak existence for the SDE (\ref{WUSeqsect1}).
\begin{theorem}\label{WUSthm46}
Let $d\ge2$ and $1<\alpha<2$. Assume Assumption \ref{WUSassumption41}. Then for each $(s,x)\in \mathbb{R}_{+}\times \mathbb{R}^{d}$, there exists a probability space $(\Omega,\mathcal{A},\mathbf{P}),$  on which a non-degenerate $\alpha$-stable process $(S_{t})_{t\geq 0}$ and a c\`adl\`ag process  $(X_{t})_{t\geq 0 }$ are defined, such that (\ref{WUSeqsect1}) is satisfied and
\begin{equation}\label{WUSeqthm45-1}
\mathbf{E}\Big[\int_{0}^{\infty}e^{-\lambda t}g(t+s,X_{t})dt\Big]=G^{\lambda}g(s,x),\qquad \lambda>\lambda_{0},\ g\in \mathcal{B}_{b}(\mathbb{R}_{+}\times \mathbb{R}^{d}),
\end{equation}
where $G^{\lambda}$ is defined in (\ref{WUSeqlemma440}).
\end{theorem}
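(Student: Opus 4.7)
The plan is to obtain $X$ as a subsequential limit in law of the strong solutions $X^{(n)}$ to the SDE with mollified drifts $b^{(n)}$ from (\ref{WUSeqsect40}). For each $n$, $b^{(n)}$ is bounded and globally Lipschitz, so a unique strong solution $X^{(n)}$ exists on the probability space carrying $S$, and by Lemma \ref{newlemmasn} the time-space resolvent of $X^{(n)}$ equals $G_n^{\lambda}$. The key uniform input is Lemma \ref{WUSlemma45}: $\sup_n G_n^{\lambda}(|b^{(n)}|)(s,x)\to 0$ as $\lambda\to\infty$. Because $\mathrm{supp}(b^{(n)})\subset[0,T]\times\Rd$, this translates into a uniform-in-$n$ bound on $\bfE\int_0^T|b^{(n)}(s+u,X_u^{(n)})|\,du$, and combined with the strong Markov property of $X^{(n)}$ (applied at a shifted starting point to which the same resolvent estimate is reapplied) it yields the uniform oscillation control $\sup_n\sup_{r\in[0,T]}\bfE\int_r^{r+\delta}|b^{(n)}(s+u,X_u^{(n)})|\,du \to 0$ as $\delta\to 0$.

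With this in hand, I would write $X_t^{(n)}=x+S_t+A_t^{(n)}$ with $A_t^{(n)}=\int_0^t b^{(n)}(s+u,X_u^{(n)})\,du$ and establish tightness of $\{X^{(n)}\}$ in $D([0,\infty);\Rd)$: the law of $S$ is already tight, while the oscillation bound above combined with Chebyshev's inequality verifies Aldous' criterion for the absolutely continuous drift paths $A^{(n)}$. Along a subsequence, Skorokhod's representation theorem places everything on a new probability space on which $X^{(n)}\to X$ almost surely in the Skorokhod topology, together with driving stable processes $S^{(n)}$ (each equal in law to $S$) converging a.s. to a non-degenerate $\alpha$-stable $S$. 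For $g\in \CC_b(\hs)$ of compact support, dominated convergence combined with Lemma \ref{WUSlemma44}(ii) then gives
\[
\bfE\Big[\int_0^{\infty}e^{-\lambda t}g(t+s,X_t)\,dt\Big] \;=\; \lim_{n\to\infty} G_n^{\lambda}g(s,x) \;=\; G^{\lambda}g(s,x),
\]
and a standard monotone class argument extends (\ref{WUSeqthm45-1}) to all $g\in\CB_b(\hs)$.

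It remains to verify that $X$ actually solves the SDE. Applied to $|b|\wedge m$ and passed to the limit, (\ref{WUSeqthm45-1}) yields $\bfE\int_0^{\infty}e^{-\lambda t}|b(s+t,X_t)|\,dt \le G^{\lambda}(|b|)(s,x) <\infty$ by Remark \ref{WUSremarkGlambdab}, which secures the integrability condition (\ref{WUSeqsect12}). For the drift identity itself I would pass to the limit in the martingale problem for each $X^{(n)}$: for $\varphi\in C_b^{1,2}(\hs)$ the process $\varphi(s+t,X_t^{(n)})-\varphi(s,x)-\int_0^t(\partial_u\varphi + A\varphi+b^{(n)}\cdot\nabla\varphi)(s+u,X_u^{(n)})\,du$ is a martingale, and once the analogous martingale property is established for $X$ with $b$ in place of $b^{(n)}$, a standard It\^o-type argument identifies $X-x-S$ with $\int_0^\cdot b(s+u,X_u)\,du$. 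The hard part is exactly the passage to the limit in the drift integral $\int_0^t b^{(n)}(s+u,X_u^{(n)})\cdot\nabla\varphi(s+u,X_u^{(n)})\,du\longrightarrow\int_0^t b(s+u,X_u)\cdot\nabla\varphi(s+u,X_u)\,du$ along the Skorokhod-convergent trajectories, since $b$ may be genuinely singular in space. My plan is to split $b^{(n)}-b=(b^{(n)}_1-b_1)+(b^{(n)}_2-b_2)$ and dominate the expected error $\bfE\int_0^{\infty}e^{-\lambda t}|b^{(n)}-b|(s+t,X_t^{(n)})\,dt$ through the time-space resolvent of $X^{(n)}$: the $L^q_tL^p_x$ part is handled by Proposition \ref{WUSprop32} and (\ref{WUSeqremark413}), while the bounded part is treated by Lebesgue-a.e.\ convergence combined with dominated convergence under the smoothing effect of the resolvent. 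Together with the locally uniform convergence $G_n^{\lambda}\to G^{\lambda}$ from Lemma \ref{WUSlemma44}, these Krylov-type bounds are precisely what allow replacing $b^{(n)}$ by $b$ in the limit.
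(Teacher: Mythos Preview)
Your overall architecture matches the paper's: mollify the drift, use the uniform resolvent control of Lemma~\ref{WUSlemma45} to get tightness via an Aldous-type criterion, pass to an a.s.\ Skorokhod limit, identify the resolvent of the limit via Lemma~\ref{WUSlemma44}, and finally verify the SDE. The paper also tracks the triple $(X^{n},\int_0^{\cdot}b^{(n)}(s+u,X^n_u)\,du,S)$ jointly through the Skorokhod step, exactly so that the limiting stable driver is available; you do the same in spirit.

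There is, however, a real gap in your drift-limit step. Your bound
\[
\bfE\Big[\int_0^{\infty}e^{-\lambda t}\,|b^{(n)}-b|\big(s+t,X^{(n)}_t\big)\,dt\Big]\ \longrightarrow\ 0
\]
only lets you replace $b^{(n)}$ by $b$ \emph{along the approximating trajectory} $X^{(n)}$; it says nothing about the passage from $b(\,\cdot\,,X^{(n)})$ to $b(\,\cdot\,,X)$. Since $b$ is genuinely singular, the a.s.\ convergence $X^{(n)}\to X$ does not by itself give $\int_0^t b(s+u,X^{(n)}_u)\,du\to\int_0^t b(s+u,X_u)\,du$, and invoking $G_n^{\lambda}\to G^{\lambda}$ from Lemma~\ref{WUSlemma44} does not close this either: that lemma compares resolvents applied to a \emph{fixed} bounded test function, not the random drift integral against an $\CF_{t_1}$-measurable indicator needed for the martingale identity.

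The paper closes this gap by inserting a \emph{third} index: for an auxiliary smooth $b^{(k)}$ it writes, for each $t$,
\[
\int_0^{t}\!\big(b^{(n)}(\tilde X^{n})-b(\tilde X)\big)
=\int_0^{t}\!\big(b^{(n)}-b^{(k)}\big)(\tilde X^{n})
+\int_0^{t}\!\big(b^{(k)}(\tilde X^{n})-b^{(k)}(\tilde X)\big)
+\int_0^{t}\!\big(b^{(k)}-b\big)(\tilde X),
\]
bounds the outer terms by $G_n^{\lambda}(|b^{(n)}-b^{(k)}|)(s,x)$ and $G^{\lambda}(|b-b^{(k)}|)(s,x)$, and handles the middle term by the Lipschitz continuity of the fixed $b^{(k)}$ together with $\tilde X^{n}\to\tilde X$ a.s.\ (this is the content of Claim~\ref{WUSclaimthm45} and inequality~(\ref{WUSeqthm456})). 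Choosing $k$ large first and then $n$ large yields convergence in probability of the drift integral, and hence the SDE for the limit. Your sketch should incorporate exactly this three-way split; without it the argument does not go through. As a secondary remark, the paper passes to the limit \emph{directly in the SDE} (\ref{WUSeqthm45495}) rather than via the martingale problem, which avoids the extra It\^o-type identification you propose.
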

\begin{proof}
For the construction of  weak solutions to (\ref{WUSeqsect1}), we basically follow the proofs of \cite[Theorem 4.1]{MR3127913}  and \cite[Theorem 3.1]{MR2359059}. We will see that the weak solutions constructed in this way would automatically satisfy (\ref{WUSeqthm45-1}).

Since $b^{(n)}(\cdot,\cdot)$ is bounded and globally Lipschitz in the space variable, for any non-degenerate $\alpha$-stable process $S$ defined on a filtered probability space  $(\Omega,\mathcal{A},\mathbf{P}),$ there exists a strong solution $(X_{t}^{n})_{t\geq 0}$ to the SDE
\[
\begin{cases}dX^n_{t}=dS_{t}+b^{(n)}(s+t, X^n_{t})dt, & \ t\ge 0,\\
   \   X_{0}=x.
    \end{cases}
\]
Therefore,  for each $t \ge 0$,
\begin{equation}\label{WUSeqthm450}
X_{t}^{n}=x+S_{t}+\int_{0}^{t}b^{(n)}(s+u,X_{u}^{n})du \quad \ \mbox{a.s.}.
\end{equation}

Define $Y:=(Y_{t}^{n})_{t \ge 0}$ with $Y_{t}^{n}:= \int_{0}^{t}b^{(n)}(s+u,X_{u}^{n})du$, $t\ge0$, and $Z^{n}:=(X^{n},Y^{n},S).$ Since the remaining proof is rather long, we do it into several steps.

``\textit{Step 1}'': We show that the family $\{Z^{n}:n \in \BN\}$ of random elements in $(D^3, \CD^3)$  is tight.  It suffices to  show that
\begin{equation}\label{WUSeqthm451}
 \lim_{l\rightarrow\infty}\limsup_{n\rightarrow\infty}\bfP\Big(\sup_{0\leq u\leq t}|Z_{u}^{n}|>l\Big)=0,\quad \forall t\geq 0,
\end{equation}
and
\begin{equation}\label{WUSeqthm452}
\limsup_{n\rightarrow\infty}\bfP \big(|Z_{t\wedge(\tau^{n}+r_{n})}^{n}-Z_{t\wedge\tau^{n}}^{n}|>\epsilon\big)=0,\quad  \forall t\geq 0,\ \epsilon>0,
\end{equation}
where each $\tau^{n}$ is an stopping-time with respect to the natural filtration induced by $Z^n$, $n\in\BN$, and $(r_{n})_{n\in \mathbb{N}}$ is a sequence of real numbers with $r_{n}\downarrow 0$ as $n\to \infty$. For $0\leq u\leq t,$ we have
$$
|Z_{u}^{n}|\leq \sqrt{3}\Big(|x|+|S_{u}|+\int_{0}^{u}|b^{(n)}(s+r,X_{r}^{n})|dr\Big),
$$
so
$$
\text{  }\sup_{0\leq u\leq t}|Z_{u}^{n}|\leq \sqrt{3}\Big(|x|+\ \sup_{0\leq u\leq t}|S_{u}|+\int_{0}^{t}|b^{(n)}(s+r,X_{r}^{n})|dr\Big).
$$
Therefore, to get (\ref{WUSeqthm451}), it suffices to show
\begin{equation}\label{WUSeqthm4521}
 \lim_{l\rightarrow\infty}\limsup_{n\rightarrow\infty}\bfP\Big(\int_{0}^{t}|b^{(n)}(s+u,X_{u}^{n})|du>l\Big)=0,\quad \forall t\geq 0.
\end{equation}
By Chebyshev's inequality and Lemma \ref{newlemmasn}, we have
\begin{align}\bfP\Big( \int_{0}^{t}|b^{(n)}(s+u,X_{u}^{n})|du>l\Big)\leq  & l^{-1}\mathbf{E}\Big[\int_{0}^{t}|b^{(n)}(s+u,X_{u}^{n})|du\Big]\notag \\
\le &  l^{-1} e^{\theta t} \mathbf{E}\Big[\int_{0}^{t}e^{-\theta u}|b^{(n)}(s+u,X_{u}^{n})|du\Big] \notag\\
\le & l^{-1} e^{\theta t} \mathbf{E}\Big[\int_{0}^{\infty}e^{-\theta u}|b^{(n)}(s+u,X_{u}^{n})|du \Big]\notag\\
\label{WUSeqthm453}\le &  l^{-1} e^{\theta t} G_{n}^{\theta}(|b^{(n)}|)(s,x),
\end{align}
where $\theta >  \lambda_0 $ is a constant. By (\ref{WUSeqlemma451}), $\sup_{n\in \BN}G_{n}^{\theta}(|b^{(n)}|)(s,x)<\infty$. So (\ref{WUSeqthm4521}) follows from (\ref{WUSeqthm453}). As a consequence, (\ref{WUSeqthm451}) is proved.

By  (\ref{WUSeqthm450}),  we have
$$
|Z_{t\wedge(\tau^{n}+r_{n})}^{n}-Z_{t\wedge\tau^{n}}^{n}|\leq 2\sqrt{3}\Big(|S_{t\wedge(\tau^{n}+r_{n})}-S_{t\wedge\tau^{n}}|+\int_{t\wedge\tau^{n}}^{t\wedge(\tau^{n}+r_{n})}|b^{(n)}(s+u,X_{u}^{n})|du\Big),\
$$
so
\begin{align}
\bfP(|Z_{t\wedge(\tau^{n}+r_{n})}^{n}-Z_{t\wedge\tau^{n}}^{n}|>\epsilon)\leq & \bfP\Big (|S_{t\wedge(\tau^{n}+r_{n})}-S_{t\wedge\tau^{n}}|>\frac{\epsilon}{4\sqrt{3}}\Big) \notag\\
\label{WUSeqthm454}& \quad +\bfP\Big(\int_{t\wedge\tau^{n}}^{t\wedge(\tau^{n}+r_{n})}|b^{(n)}(s+ u,X_{u}^{n})|du>\frac{\epsilon}{4\sqrt{3}}\Big).
\end{align}
Since $S_0=0$ a.s. and $S$ has c\`adl\`ag paths, it follows from strong Markov property that
\begin{equation}\label{WUSeqthm4541}
\limsup_{n \to \infty} \bfP\Big (|S_{t\wedge(\tau^{n}+r_{n})}-S_{t\wedge\tau^{n}}|>\frac{\epsilon}{4\sqrt{3}}\Big)  \le \limsup_{n\rightarrow\infty}\bfP\Big(\sup_{0\leq u\leq r_{n}}|S_{u}|>\frac{\epsilon}{4\sqrt{3}}\Big) =0.
\end{equation}
Again by the strong Markov property,
\begin{align}
& \bfP\Big(\int_{t\wedge\tau^{n}}^{t\wedge(\tau^{n}+r_{n})}|b^{(n)}(s+ u,X_{u}^{n})|du>\frac{\epsilon}{4\sqrt{3}}\Big) \notag \\
\label{WUSeqthm4540} =& \bfE \Big[\bfP\Big(\int_{t\wedge\tau^{n}}^{t\wedge(\tau^{n}+r_{n})}|b^{(n)}(s+u,X_{u}^{n})|du>\frac{\epsilon}{4\sqrt{3}}\ \Big | \ X^n_{t\wedge\tau^{n}}\Big) \Big].
\end{align}
By Chebyshev's inequality and Lemma \ref{newlemmasn},
\begin{align}
& \bfP\Big(\int_{t\wedge\tau^{n}}^{t\wedge(\tau^{n}+r_{n})}|b^{(n)}(s+u,X_{u}^{n})|du>\frac{\epsilon}{4\sqrt{3}}\ \Big | \ X^n_{t\wedge\tau^{n}}\Big) \notag  \\
\le & 4\sqrt{3}\epsilon^{-1}\bfE\Big[\int_{t\wedge\tau^{n}}^{t\wedge\tau^{n}+r_{n}}|b^{(n)}(s+u,X_{u}^{n})|du \ \Big | \ X^n_{t\wedge\tau^{n}} \Big]  \notag \\
\le & 4\sqrt{3}e \epsilon^{-1}\bfE\Big[\int_{t\wedge\tau^{n}}^{t\wedge\tau^{n}+r_{n}}\exp\Big(-\frac{u-t\wedge\tau^{n}}{r_{n}}\Big) |b^{(n)}(s+u,X_{u}^{n})|du \ \Big | \ X^n_{t\wedge\tau^{n}} \Big] \notag \\
\le & 4\sqrt{3}e\epsilon^{-1}G_{n}^{\lambda_n}(|b^{(n)}|)(X^n_{t\wedge\tau^{n}}) \notag \\
\label{WUSeqthm45400}\le&  4\sqrt{3}e\epsilon^{-1}\|G_{n}^{\lambda_n}(|b^{(n)}|)\|,
\end{align}
where $\lambda_n:=1/r_n$. By Lemma \ref{WUSlemma45}, we have $
\lim_{n\to \infty} \|G_{n}^{\lambda_n}(|b^{(n)}|)\| =0$. It follows from (\ref{WUSeqthm4540}) and (\ref{WUSeqthm45400}) that
\begin{equation}\label{WUSeqthm4542}
\limsup_{n\rightarrow\infty}\bfP\Big(\int_{t\wedge\tau^{n}}^{t\wedge(\tau^{n}+r_{n})}|b^{(n)}(s+ u,X_{u}^{n})|du>\frac{\epsilon}{4\sqrt{3}}\Big)=0.
\end{equation}
Combining (\ref{WUSeqthm454}), (\ref{WUSeqthm4541}) and (\ref{WUSeqthm4542}), we obtain (\ref{WUSeqthm452}).

As shown in the proof of \cite[Theorem 3.1]{MR2359059}, we can use the conditions (\ref{WUSeqthm451}) and (\ref{WUSeqthm452}) to find a probability space $(\tilde{\Omega},\tilde{\mathcal{A}},\tilde{\bfP})$
and  processes $\tilde{Z}=(\tilde{X},\tilde{Y},\tilde{S}),\ \tilde{Z}_{n}=(\tilde{X}^{n},\tilde{Y}^{n},\tilde{S}^{n}),\ n=1,2,\cdots,$ defined on it, such that

(i)$\ \ \tilde{Z}_{n}\rightarrow\ \tilde{Z}$ \ $\tilde{\bfP}$-a.s. (as random elements in $(D^3, \CD^3)$) as $n\rightarrow\infty$.

(ii)$\ \tilde{Z}_{n}$ and $Z_{n}$ are identically distributed for each $n\in \mathbb{N}$.

(iii)\ $\tilde{Z}$, $\tilde{Z}_{n}$, $n=1,2,\cdots,$ have  c\`adl\`ag paths.

\noindent In fact, the above three properties hold only for a subsequence $\tilde{Z}_{n_k}$, $k \in \BN$; however, for simplicity, we denote this subsequence still by  $\tilde{Z}_{n}$, $n \in \BN$. It is easy to see that $\tilde{S}^n$ and $\tilde{S}$ are both $\alpha$-stable processes with the characteristic exponent $\psi$.

By (ii) and (\ref{WUSeqthm450}), we have
\begin{equation}\label{WUSeqthm4544}
\tilde{X}_{t}^{n}=x+\tilde{S}^n_{t}+\int_{0}^{t}b^{(n)}(s+u,\tilde{X}_{u}^{n})du \quad \tilde{\bfP}\mbox{-a.s.},  \ \forall  t\geq s .\
\end{equation}
According to (iii) and \cite[~Chap.~3,~Lemma~7.7]{MR838085}, there exists a countable set $I\subset \BR_+$ such that
\begin{equation}\label{WUSeqthm4545}
\tilde{\bfP}(\tilde{Z}_{t-}=\tilde{Z}_{t})=1, \qquad  \forall t \in \BR_+ \setminus I.
\end{equation}
It follows from (i), (\ref{WUSeqthm4545}) and \cite[~Chap.~3,~Prop.~5.2]{MR838085} that
\begin{equation}\label{WUSeqthm4546}
\lim_{n\to\infty}\tilde{X}^n_{t}=\tilde{X}_{t} \quad \mbox{and} \quad \lim_{n\to\infty}\tilde{S}^n_{t}=\tilde{S}_{t} \quad \tilde{\bfP}\mbox{-a.s.}, \qquad \forall t \in \BR_+ \setminus I.
\end{equation}

``\textit{Step 2}'': Next, we show that, for any $f\in \CB_b(\hs)$ and $\lambda>\lambda_0$,
\[
\tilde{\bfE}\Big[\int_0^{\infty}\exp(-\lambda t)f(t+s,\tilde{X}_t)dt\Big]=G^{\lambda}f(s,x),
\]
where $\tilde{\mathbf{E}}[\cdot]$ denotes the expectation taken with respect to the probability measure  $\tilde{\mathbf{P}}$ on $(\tilde{\Omega},\tilde{\mathcal{A}})$ and $G^{\lambda}$ is defined in (\ref{WUSeqlemma440}). We first consider $f \in C_b(\hs)$. By dominated convergence theorem,
\[
\lim_{n\to \infty} \tilde{\bfE}[f(t+s,\tilde{X}^n_t)]=\tilde{\bfE}[f(t+s,\tilde{X}_t)], \qquad \forall t \in \BR_+ \setminus I.
\]
Since $I$ is countable, by dominated convergence and Fubini's theorem, we have
\begin{align}
\lim_{n\to \infty} \tilde{\bfE}\Big[\int_0^{\infty}\exp(-\lambda t)f(t+s,\tilde{X}^n_t)dt\Big]=& \lim_{n\to \infty} \int_0^{\infty}\exp(-\lambda t)\tilde{\bfE}\big[f(t+s,\tilde{X}^n_t)\big]dt \notag \\
=&  \int_0^{\infty}\exp(-\lambda t)\tilde{\bfE}\big[f(t+s,\tilde{X}_t)\big]dt \notag \\
\label{WUSeqthm4547}=&  \tilde{\bfE}\Big[\int_0^{\infty}\exp(-\lambda t)f(t+s,\tilde{X}_t)dt\Big].
\end{align}
From Lemma \ref{newlemmasn} we know that, for any $\lambda>\lambda_0$,
\begin{equation}\label{WUSeqthm4548}
\tilde{\bfE}\Big[\int_0^{\infty}\exp(-\lambda t)f(t+s,\tilde{X}^n_t)dt\Big]=G^{\lambda}_nf(s,x).\end{equation}
Since $G^{\lambda}_nf \to G^{\lambda}f$ locally uniformly as $n\to \infty$ by Lemma \ref{WUSlemma44}, it follows from (\ref{WUSeqthm4547}) and (\ref{WUSeqthm4548}) that
\begin{equation}\label{WUSeqthm4549}
\tilde{\bfE}\Big[\int_0^{\infty}\exp(-\lambda t)f(t+s,\tilde{X}_t)dt\Big]=G^{\lambda}f(s,x), \quad f \in C_b(\hs).
\end{equation}

For any open subset $O $ of $  \hs$, we can find $f_n \in C_b(\hs)$, $n \in \BN$, such that $ 0 \le f_n \uparrow \bfi_O$ as $n\to \infty$.  For $\lambda>\lambda_0$,
\begin{align}
|G^{\lambda}\bfi_O(s,x)-G^{\lambda}f_n(s,x)|=& |G^{\lambda}(\bfi_O-f_n)(s,x)|\notag \\
\label{WUSeqopenseto} \le & \sum_{k=0}^{\infty}|R^{\lambda}(BR^{\lambda})^{k}(\bfi_O-f_n)|(s,x).
\end{align}
For each $k \in \BZ_+$, we have
\begin{equation}\label{WUSeqconvopenseto}
\lim_{n\to \infty}|R^{\lambda}(BR^{\lambda})^{k}(\bfi_O-f_n)|(s,x)=0.
\end{equation}
This can be achieved by applying dominated convergence theorem for $k+1$ times. By (\ref{WUSeqopenseto}), (\ref{WUSeqlemma442}) and  (\ref{WUSeqconvopenseto}), we see that
\begin{equation}\label{WUSeqconvglfn}
\lim_{n \to \infty} |G^{\lambda}(\bfi_O-f_n)|(s,x)=0.
\end{equation}
Since (\ref{WUSeqthm4548}) is true for $f=f_n$, with $n \to \infty$, it follows from monotone convergence theorem that (\ref{WUSeqthm4549}) is also true for $f=\bfi_O$. Now, we can use a  monotone class argument to extend (\ref{WUSeqthm4549}) to all $f \in \CB_b(\hs)$.

Similarly to (\ref{WUSeqconvglfn}), we know that $G^{\lambda}(|b|\wedge k)(s,x)$ goes to $G^{\lambda}(|b|)(s,x)$ as $k \to \infty$. By Remark \ref{WUSremarkGlambdab} and monotone convergence theorem,
\begin{align}
\tilde{\bfE}\Big[\int_0^{\infty}\exp(-\lambda t)|b(t+s,\tilde{X}_t)|dt\Big]=& \lim_{k \to \infty} \tilde{\bfE}\Big[\int_0^{\infty}\exp(-\lambda t)(|b(t+s,\tilde{X}_t)|\wedge k )dt\Big]  \notag \\
=& \lim_{k \to \infty}G^{\lambda}(|b|\wedge k)(s,x)\notag \\
\label{WUSeqthm45491}=& G^{\lambda}(|b|)(s,x)\le \|G^{\lambda}(|b|)\|<\infty.
\end{align}
Therefore, for each $t \ge 0$, we have $\int_0^{t}|b(t+s,\tilde{X}_u)|du<\infty \ \tilde{\mathbf{P}}\mbox{-a.s.}$.

``\textit{Step 3}'': We next show that
\begin{equation}\label{WUSeqthm45495}
\tilde{X}_{t}=x+\tilde{S}_{t}+\int_{0}^{t}b(s+u,\tilde{X}_{u})du    \quad \tilde{\mathbf{P}}\mbox{-a.s.}, \quad \forall t \in \BR_+ \setminus I.
\end{equation}
In view of (\ref{WUSeqthm4544}) and (\ref{WUSeqthm4546}), it suffices to show that, for each $t \ge 0$,
\begin{equation}\label{WUSeqthm455}
\int_{0}^{t}b^{(n)}(s+u,\tilde{X}_{u}^{n})du \to \int_{0}^{t}b(s+u,\tilde{X}_{u})du \quad \mbox{in probability as } n \to \infty.
\end{equation}
For any $\delta>0$ and $\lambda > \lambda_0$, by Chebyshev's inequality,
\begin{align}
&\tilde{\bfP}\Big(\big|\int_{0}^{t}b^{(n)}(s+u,\tilde{X}_{u}^{n})du - \int_{0}^{t}b(s+u,\tilde{X}_{u})du\big| >3 \delta\Big) \notag \\
 \le & \tilde{\bfP}\Big(\big|\int_{0}^{t}b^{(k)}(s+u,\tilde{X}_{u}^{n})du - \int_{0}^{t}b^{(k)}(s+u,\tilde{X}_{u})du\big| > \delta\Big) \notag \\
 & \ + \tilde{\bfP}\Big(\big|\int_{0}^{t}(b^{(n)}-b^{(k)})(s+u,\tilde{X}_{u}^{n})du\big| > \delta\Big)  +\tilde{\bfP}\Big(\big|\int_{0}^{t}(b-b^{(k)})(s+u,\tilde{X}_{u})du\big| > \delta\Big) \notag \\
\le & \delta^{-1}\tilde{\bfE}\Big[\big|\int_{0}^{t}b^{(k)}(s+u,\tilde{X}_{u}^{n})du - \int_{0}^{t}b^{(k)}(s+u,\tilde{X}_{u})du\big|\Big] \notag \\
& \ + \delta^{-1}\tilde{\bfE}\Big[\int_{0}^{t}|b^{(n)}-b^{(k)}|(s+u,\tilde{X}_{u}^{n})du \Big] + \delta^{-1}\tilde{\bfE}\Big[\int_{0}^{t}|b-b^{(k)}|(s+u,\tilde{X}_{u})du \Big] \notag \\
\le & \delta^{-1}e^{\lambda t} \bigg( e^{-\lambda t}\tilde{\bfE}\Big[\int_{0}^{t}\big|b^{(k)}(s+u,\tilde{X}_{u}^{n}) - b^{(k)}(s+u,\tilde{X}_{u})\big|du\Big] \notag \\
& \ + \tilde{\bfE}\Big[\int_{0}^{t}e^{-\lambda u}|b^{(n)}-b^{(k)}|(s+u,\tilde{X}_{u}^{n})du \Big]  +\tilde{\bfE}\Big[\int_{0}^{t}e^{-\lambda u}|b-b^{(k)}|(s+u,\tilde{X}_{u})du \Big]\bigg) \notag \\
\le &  \delta^{-1}\int_{0}^{t}\tilde{\bfE}\big[|b^{(k)}(s+u,\tilde{X}_{u}^{n}) - b^{(k)}(s+u,\tilde{X}_{u})|\big]du + e^{\lambda t}\delta^{-1}G^{\lambda}_n(|b^{(n)}-b^{(k)}|)(s,x) \notag \\
\label{WUSeqthm456} & \quad +e^{\lambda t}\delta^{-1}G^{\lambda}(|b-b^{(k)}|)(s,x),
\end{align}
where  $k\in \BN$ will be determined later. At the moment, we assume the following claim is true.
\begin{claim}\label{WUSclaimthm45}
$\lim_{n,k\to\infty}G^{\lambda}_n (|b^{(n)}-b^{(k)}|) (s,x)=\lim_{k\to \infty}G^{\lambda}(|b-b^{(k)}|)(s,x)=0.$
\end{claim}
\noindent The proof of this claim will be given in ``\textit{Step 4}''. According to Claim \ref{WUSclaimthm45}, for any $\epsilon>0$, we can choose $k_0$ large enough such that, for any $n\ge k_0$,
\begin{equation}\label{WUSeqthm4565}
G^{\lambda}(|b-b^{(k_0)}|)(s,x)\le \epsilon \delta e^{-\lambda t} /3\quad \mbox{and} \quad G^{\lambda}_n(|b^{(n)}-b^{(k_0)}|)(s,x)\le \epsilon \delta e^{-\lambda t} /3.
\end{equation}
Noting that  $b^{(k_0)}$ is bounded and globally Lipschitz in the space variable, by dominated convergence theorem,
\begin{equation}\label{WUSeqthm457}
\lim_{n \to \infty}\int_{0}^{t}\tilde{\bfE}\big[|b^{(k_0)}(s+u,\tilde{X}_{u}^{n}) -b^{(k_0)}(s+u,\tilde{X}_{u})|\big]du=0.
\end{equation}
Therefore, there exists $n_0\ge k_0$ such that for $n \ge n_0$,
\begin{equation}\label{WUSeqthm458}
\int_{0}^{t}\tilde{\bfE}\big[|b^{(k_0)}(s+u,\tilde{X}_{u}^{n}) -b^{(k_0)}(s+u,\tilde{X}_{u})|\big]du \le \epsilon \delta /3.
\end{equation}
Combining (\ref{WUSeqthm456}), (\ref{WUSeqthm4565}) and (\ref{WUSeqthm458}) yields
\[
\tilde{\bfP}\Big(\big|\int_{0}^{t}b^{(n)}(s+u,\tilde{X}_{u}^{n})du - \int_{0}^{t}b(s+u,\tilde{X}_{u})du\big| >3 \delta\Big) \le \epsilon, \quad \forall n\ge n_0,
\]
which implies (\ref{WUSeqthm455}).

``\textit{Step 4}'': In this step, we prove Claim \ref{WUSclaimthm45}. Let $\lambda>\lambda_0$. Since \[|b^{(n)}-b^{(k)}|\le |b^{(n)}_{1}-b^{(k)}_{1}|+ |b^{(n)}_{2}-b^{(k)}_{2}|,\] by (\ref{WUSeqthm4548}), we have
\begin{equation}\label{WUSeqthm46step41}
G^{\lambda}_n(|b^{(n)}-b^{(k)}|) \le G^{\lambda}_n(|b^{(n)}_{1}-b^{(k)}_{1}|)+ G^{\lambda}_n(|b^{(n)}_{2}-b^{(k)}_{2}|).
\end{equation}
It follows from Proposition \ref{WUSprop32} and Remark \ref{WUSremark42} that
\[
\|\nabla R^{\lambda}(|b^{(n)}_{2}-b^{(k)}_{2}|)\|\le M_{\lambda} \|b^{(n)}_{2}-b^{(k)}_{2}\|_{L^q([0,T];L^p(\Rd))}.
\]
By (\ref{WUSeqlemma439}), we have, for each $i \in \BN$,
\begin{align*}
&\|R^{\lambda}(B_{n}R^{\lambda})^{i}(|b^{(n)}_{2}-b^{(k)}_{2}|)\| \\
 \le&  M_{\lambda}\Vert b^{(n)}_{2}-b^{(k)}_{2}\Vert_{L^q([0,T];L^p(\Rd))}(\kappa_{\lambda})^{i-1}(M\lambda^{-1}+N_{\lambda}\|b_2\|_{L^q([0,T];L^p(\Rd))}).
\end{align*}
By Proposition \ref{WUSprop32} and (\ref{WUSeqremark413}),
\begin{align*}
& \|G^{\lambda}_n(|b^{(n)}_{2}-b^{(k)}_{2}|)\| \\
\le & \|R^{\lambda}(|b^{(n)}_{2}-b^{(k)}_{2}|)\|+ \sum_{i=1}^{\infty}\|R^{\lambda}(B_{n}R^{\lambda})^{i}(|b^{(n)}_{2}-b^{(k)}_{2}|)\| \\
\le& N _{\lambda}\Vert b^{(n)}_{2}-b^{(k)}_{2}\Vert_{L^q([0,T];L^p(\Rd))}
 +C_1 M_{\lambda}\Vert b^{(n)}_{2}-b^{(k)}_{2}\Vert_{L^q([0,T];L^p(\Rd))},
\end{align*}
where $C_1:=(1-\kappa_{\lambda})^{-1}(M\lambda^{-1}+N_{\lambda}\|b_2\|_{L^q([0,T];L^p(\Rd))})$ is a constant.
It follows  that $G^{\lambda}_n(|b^{(n)}_{2}-b^{(k)}_{2}|)$ converges uniformly to 0 as $n,k \to \infty$.

By (\ref{WUSeqthm46step41}), to show that $G^{\lambda}_n(|b^{(n)}-b^{(k)}|)(s,x)$ goes to 0 as $n,k\to \infty$, it suffices to show that
\begin{equation}\label{WUSeqthm4587}
G^{\lambda}_n(|b^{(n)}_{1}-b^{(k)}_{1}|) \to 0 \quad \mbox{locally uniformly as } n,k\to\infty.
\end{equation}
By (\ref{WUSeqlemma436}), (\ref{WUSeqlemma439}) and Lemma \ref{WUSlemma34}, we have, for each $i \in \BZ_+$,
\begin{equation}\label{WUSeqthm4588}
\|R^{\lambda}(B_{n}R^{\lambda})^{i}(|b^{(n)}_{1}-b^{(k)}_{1}|)\|  \le 2L_{\lambda}M (\kappa_{\lambda})^{i-1}(M\lambda^{-1}+N_{\lambda}\|b_2\|_{L^q([0,T];L^p(\Rd))})
\end{equation}
and
\begin{equation}\label{WUSeqthm4589}
\|\nabla R^{\lambda}(B_{n}R^{\lambda})^{i}(|b^{(n)}_{1}-b^{(k)}_{1}|)\|  \le 2L_{\lambda}M (\kappa_{\lambda})^{i}.
\end{equation}

Next, we show by induction that, for any $i \in \BZ_+$ and compact $K \subset \hs$,
\begin{equation}\label{WUSeqthm459}
\lim_{n,k \to \infty}\sup_{(t,y)\in K} R^{\lambda}(B_{n}R^{\lambda})^{i}(|b^{(n)}_{1}-b^{(k)}_{1}|) (t,y)=0
\end{equation}
and
\begin{equation}\label{WUSeqthm4591}
\lim_{n,k \to \infty}\sup_{(t,y)\in K} |\nabla R^{\lambda}(B_{n}R^{\lambda})^{i}(|b^{(n)}_{1}-b^{(k)}_{1}|) |(t,y)=0
\end{equation}
For $i=0$, by (\ref{WUSeqlemma446}), it is easy to see that
\[
\lim_{n,k \to \infty}\sup_{(t,y)\in K}R^{\lambda}(|b^{(n)}_{1}-b^{(k)}_{1}|)(t,y) = 0.
\]
Very similarly, we also have
\[
\lim_{n,k \to \infty}\sup_{(t,y)\in K}|\nabla R^{\lambda}(|b^{(n)}_{1}-b^{(k)}_{1}|)|(t,y) = 0.
\]
Suppose that (\ref{WUSeqthm459}) and (\ref{WUSeqthm4591}) are true for $i$. For $m>0$ let $A_m$ and $h_m$ be as in (\ref{WUSdefiofAm}) and (\ref{WUSdefiofhm}), respectively.
Set
\[
C_{n,k,m}:=\sup_{(t,y)\in [0,T]\times A_m}|\nabla R^{\lambda}(B_{n}R^{\lambda})^{i}(|b^{(n)}_{1}-b^{(k)}_{1}|)|(t,y).
\]
By induction hypothesis, $\lim_{n,k\to\infty}C_{n,k,m}=0$ for any $m>0$. It follows from (\ref{WUSeqthm4589}) that
\begin{align}
& |R^{\lambda}(B_{n}R^{\lambda})^{i+1}(|b^{(n)}_{1}-b^{(k)}_{1}|)|(t,y) \notag \\
=& \big|R^{\lambda}\big(b^{(n)}(h_m+1-h_m)\cdot \nabla R^{\lambda}(B_{n}R^{\lambda})^{i}(|b^{(n)}_{1}-b^{(k)}_{1}|)\big)\big|(t,y) \notag \\
\le & \big|R^{\lambda}\big(b^{(n)}h_m \cdot \nabla R^{\lambda}(B_{n}R^{\lambda})^{i}(|b^{(n)}_{1}-b^{(k)}_{1}|)\big)\big|(t,y) \notag \\
& \qquad + \big|R^{\lambda}\big(b^{(n)}(1-h_m)\cdot \nabla R^{\lambda}(B_{n}R^{\lambda})^{i}(|b^{(n)}_{1}-b^{(k)}_{1}|)\big)\big|(t,y)  \notag \\
\label{WUSeqthm4592} \le &  C_{n,k,m} \|R^{\lambda}(|b^{(n)}|)\|+  2L_{\lambda}M (\kappa_{\lambda})^{i} |R^{\lambda}(|b^{(n)}|(1-h_m))|(t,y).
\end{align}
Combining (\ref{WUSeqsuprlambdabn}), (\ref{WUSeqlemma447}) and (\ref{WUSeqthm4592}), we obtain (\ref{WUSeqthm459}) for $i+1$. The claim (\ref{WUSeqthm4591}) for $i+1$ can be similarly proved. Therefore, (\ref{WUSeqthm459}) and (\ref{WUSeqthm4591}) are true for any $i \in \BZ_+$.

By (\ref{WUSeqdefiforGlambdan1}), (\ref{WUSeqthm4588}) and (\ref{WUSeqthm459}), we see that (\ref{WUSeqthm4587}) holds.  As a consequence,
\[
\lim_{n,k\to\infty}G^{\lambda}_n(|b^{(n)}-b^{(k)}|)(s,x)=0.
\]
With a very similar argument as above, we conclude that
\[
\lim_{k\to \infty}G^{\lambda}(|b-b^{(k)}|)(s,x)=0.
\]
Thus Claim \ref{WUSclaimthm45} is proved.

``\textit{Step 5}'': Since $I$ is countable and $\tilde{X}$, $\tilde{S}$ and $\int_0^t b(u+s,\tilde{X}_u)du$ all have c\`adl\`ag paths, (\ref{WUSeqthm45495}) must hold for all $t \ge 0$. This completes the proof.
\end{proof}

\begin{corollary}\label{WUScoro47} Assume the same assumptions as in Theorem \ref{WUSthm46}. For each $(s,x) \in \hs$, let $X^{s,x}=(X^{s,x}_{t})_{t \ge 0}$ be the solution of (\ref{WUSeqsect1}) that we constructed in Theorem \ref{WUSthm46}. Define the process $Y^{s,x}=(Y^{s,x}_{t})_{t \ge 0}$ by
\[
Y^{s,x}_t=\begin{cases}x, & 0\le t \le s, \\
X^{s,x}_{t-s}, & t>s.\end{cases}
\]
Let $\mathbf{P}^{s,x}$ be the probability measure on the path space $(D,\CD)$ induced by $Y^{s,x}$.  Then $\mathbf{P}^{s,x}$ is a solution to the martingale problem for $L_{t}=A+b(t,\cdot)\cdot \nabla$ starting from $(s,x)$. Moreover,  the family of measures $\{ \mathbf{P}^{s,x}: \ (s,x) \in \hs\}$ is measurable, that is, $\mathbf{P}^{s,x}(A)$ is measurable in $(s,x) $ for every $A\in \CD$.
\end{corollary}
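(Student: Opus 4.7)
The plan is two-fold: first, verify that $\mathbf{P}^{s,x}$ solves the martingale problem by lifting an It\^o expansion of $X^{s,x}$ to the canonical path space via the time shift defining $Y^{s,x}$; second, extract joint measurability in $(s,x)$ from the resolvent identity (\ref{WUSeqthm45-1}).

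For the martingale problem, the initial condition $\mathbf{P}^{s,x}(X_t=x,\,\forall t\le s)=1$ is immediate from the definition of $Y^{s,x}$, and the integrability requirement in (\ref{WUSwsintegrcondi}) follows from the fact that $X^{s,x}$ is a solution of (\ref{WUSeqsect1}) and hence satisfies (\ref{WUSeqsect12}). For the martingale identity itself, I apply It\^o's formula to $f(X^{s,x}_r)$ with $f\in C^2_b(\Rd)$; using that $X^{s,x}_r=x+S_r+\int_0^r b(s+u,X^{s,x}_u)\,du$ and that $S$ has generator $A$ of the form (\ref{WUSgeneratorA}), this produces
\[
f(X^{s,x}_r)-f(x)=M^f_r+\int_0^r Af(X^{s,x}_u)\,du+\int_0^r b(s+u,X^{s,x}_u)\cdot\nabla f(X^{s,x}_u)\,du,
\]
where $M^f$ is a genuine martingale (the compensated-jump integrand is controlled by $\|f\|_{C^2_b}(|z|^2\wedge 1)$, which is $\nu$-integrable). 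Substituting $v=s+u$ and exploiting $Y^{s,x}_{s+r}=X^{s,x}_r$ yields, for all $t\ge s$,
\[
f(Y^{s,x}_t)-f(x)=M^f_{t-s}+\int_s^t L_v f(Y^{s,x}_v)\,dv,
\]
which, transported to the canonical space under $\mathbf{P}^{s,x}$, is exactly the martingale condition (\ref{WUSeq2defimp}) after time $s$.

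For the joint measurability of $\{\mathbf{P}^{s,x}\}$, the starting point is identity (\ref{WUSeqthm45-1}):
\[
\mathbf{E}^{s,x}\Big[\int_0^\infty e^{-\lambda t}g(s+t,X_{s+t})\,dt\Big]=G^\lambda g(s,x),\qquad \lambda>\lambda_0,\ g\in\CB_b(\hs),
\]
whose right-hand side is Borel in $(s,x)$ since $G^\lambda g$ is the uniformly convergent series (\ref{WUSeqlemma440}) built from the measurable operators $R^\lambda$ and $B$. Inverting the Laplace transform in $\lambda$ together with a monotone-class argument gives measurability of $(s,x)\mapsto\mathbf{E}^{s,x}[h(X_{t})]$ for every fixed $t\ge 0$ and $h\in\CB_b(\Rd)$, i.e.\ measurability of the one-dimensional marginals.

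The main technical hurdle is upgrading this to joint measurability of $\mathbf{P}^{s,x}(A)$ for arbitrary $A\in\CD$, since (\ref{WUSeqthm45-1}) only records single-time expectations. My plan is to exploit the martingale property established above via a tower-property iteration: for $0\le t_1<\cdots<t_n$, any cylinder expectation $\mathbf{E}^{s,x}[\prod_i g_i(X_{t_i})]$ unravels into a nested integral whose kernels are single-time expectations, each of which inherits measurability in its starting point from the previous step. A cleaner alternative, which I would adopt if the tower argument proves awkward to justify without a Markov property, is to inspect the construction in Theorem~\ref{WUSthm46} directly: the laws of the strong solutions with mollified drift $b^{(n)}$ depend Borel-measurably on $(s,x)$ by classical SDE stability, the set-valued map $(s,x)\mapsto\{\text{weak limit points of }\mathbf{P}^{s,x}_n\}$ has nonempty, compact (in the weak topology) values with measurable graph, and the Kuratowski--Ryll-Nardzewski selection theorem then produces a Borel-measurable family of martingale-problem solutions that can be taken as $\{\mathbf{P}^{s,x}\}$.
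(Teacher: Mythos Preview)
Your It\^o-formula verification of the martingale problem matches the paper, which dispatches this step in one line.

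For measurability, the paper takes a slightly different and cleaner route to the one-time marginals than your Laplace inversion. It plugs test functions of the form $\tilde\varphi_n(u,y)=\varphi(y)\rho_n(u-t)\mathbf{1}_{\{u\ge t\}}$ (with $\rho_n$ a symmetric mollifier on $\BR$) into the resolvent identity (\ref{WUSeqcoro47}) and lets $n\to\infty$; right-continuity of $t\mapsto X_t$ then yields $\tfrac12 e^{-\lambda(t-s)}\mathbf{E}^{s,x}[\varphi(X_t)]$ as a pointwise limit of measurable functions of $(s,x)$. This works for any single $\lambda>\lambda_0$ and avoids the analyticity or Post--Widder machinery lurking behind ``invert the Laplace transform.'' For multi-time marginals the paper is equally terse (it writes only ``Similarly, \ldots''), the implied argument being an iteration of the same mollifier device. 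Your tower-property route is essentially this iteration; your caveat about needing a Markov property is fair, but one can close the gap by running the iteration at the level of the approximating strong solutions $X^n$ (which are Markov, being strong solutions of Lipschitz SDEs) and passing to the limit via Lemma~\ref{WUSlemma44}.

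Your alternative via Kuratowski--Ryll-Nardzewski is a genuinely different idea, but as written it does not prove the corollary: selection would only furnish \emph{some} measurable family of martingale-problem solutions, whereas the statement asserts measurability of the specific $\mathbf{P}^{s,x}$ produced in Theorem~\ref{WUSthm46}. Once weak uniqueness (Proposition~\ref{WUSprop48}) is in hand the distinction evaporates, but the paper uses the measurability of precisely this family inside the proof of Proposition~\ref{WUSprop48}, so you cannot appeal to uniqueness here without circularity.
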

\begin{proof}
A simple application of It\^{o}'s formula leads to the fact that $\mathbf{P}^{s,x}$ is a solution to the martingale problem for $L_{t}$ starting from $(s,x)$.

Let $X=(X_{t})_{t \ge 0}$ be the canonical process  on $(D,\CD)$. Let $\mathbf{E}^{s,x}[\cdot]$ denote the expectation taken with respect to the measure $\mathbf{P}^{s,x}$ on $(D,\CD)$. It follows from (\ref{WUSeqthm45-1}) that, for any $\lambda>\lambda_0$ and $g \in \CB_b(\hs)$,
\begin{equation}\label{WUSeqcoro47}
\mathbf{E}^{s,x}\Big[\int_{s}^{\infty} e^{-\lambda (t-s)}g(t,X_{t})dt\Big]=G^{\lambda}g(s,x)= \sum_{i=0}^{\infty}R^{\lambda}(BR^{\lambda})^{i}g(s,x) .
\end{equation}

We next show that the family  $\{ \mathbf{P}^{s,x}: \ (s,x) \in \hs\}$ is measurable. Let $\varphi \in C_b(\Rd)$ and $t > 0$. Define $\tilde{\varphi}_{n}:\hs \to \BR$ as follows:
\[
\tilde{\varphi}_n(u,y):=\begin{cases}0, & \ u < t,\\
   \  \varphi(y)\rho_n(u-t), & \ u \ge t,
    \end{cases}
\]
where $\rho_n$ is a mollifying sequence on $\BR$ with $\rho_n(u)=\rho_n(-u)$, $u\in \BR$. By (\ref{WUSeqcoro47}), for $s\in [0,t)$, $x\in\Rd$ and $\lambda>\lambda_{0}$,
\[
 \mathbf{E}^{s,x}\Big[\int_{s}^{\infty}e^{-\lambda(u-s)}\tilde{\varphi}_n(u,X_{u})du\Big]=\sum_{i=0}^{\infty}R^{\lambda}(BR^{\lambda})^{i}\tilde{\varphi}_n(s,x) .
\]
It follows that $\mathbf{E}^{s,x}[\int_{s}^{\infty}e^{-\lambda(u-s)}\tilde{\varphi}_n(u,X_{u})du]$ is measurable in $(s,x)$. By dominated convergence theorem and noting that $X_t$ is right-continuous, we get
\begin{align*}
\lim_{n\to \infty}\mathbf{E}^{s,x}\Big[\int_{s}^{\infty}e^{-\lambda(u-s)}\tilde{\varphi}_n(u,X_{u})du\Big]=& \lim_{n\to\infty}\mathbf{E}^{s,x}\Big[\int_{t}^{\infty}e^{-\lambda(u-s)}\varphi(X_u)\rho_n(u-t)du\Big]\\
=& \mathbf{E}^{s,x}[2^{-1} e^{-\lambda(t-s)}\varphi(X_{t})]
\end{align*}
for all $(s,x) \in [0,t)\times \Rd$, which implies that $\mathbf{E}^{s,x}[\varphi(X_{t})]$ is measurable in $(s,x) \in [0,t)\times \Rd$. If $s\ge t$, then $\mathbf{E}^{s,x}[\varphi(X_t)]=\varphi(x)$. Thus $\mathbf{E}^{s,x}[\varphi(X_{t})]$ is measurable in $(s,x) \in \hs$.

Similarly, for $0 \le r_{1} \le \cdots \le r_{l} $ and $g_{1}, \cdots, g_{l} \in C_{b}(\mathbb{R}^{d})$ with $l \in \BN$, one can show that $\mathbf{E}^{s,x}\big[\prod_{j=1}^{l}g_{j}(X_{r_{j}})\big]$ is measurable in $(s,x) \in \hs$. Now, the assertion follows by a monotone class argument.
\end{proof}


The following lemma is analog to \cite[Theorem~6.1.3]{MR2190038}, which plays an important role in showing the uniqueness of solutions to martingale problems. With this lemma, we can use the standard argument to show that  multi-dimensional distributions of solutions to the martingale problem for $L_t$ are unique,  provided that  one-dimensional distributions of those are so. Recall that $X=(X_t)_{t \ge 0}$ is the canonical process defined on the path space $(D,\mathcal{D})$ and $(\CF_t)_{t \ge 0}$ is the filtration generated by $(X_t)_{t \ge 0}$.

\begin{lemma}\label{WUSnewlemmamgt}Suppose that the probability measure $\mathbf{Q}^{s,x}$ on $(D=D([0,\infty);\mathbb{R}^{d}),\mathcal{D})$ satisfies: \\
(i) $\mathbf{Q}^{s,x}$ is a solution to the martingale problem for $L_{t}$ starting from $(s,x)$, where $L_t=A+b(t,\cdot)\cdot \nabla$; \\
(ii) $\mathbf{E}_{\mathbf{Q}^{s,x}}\Big[\int_{s}^{\infty} e^{-\lambda (t-s)}|b(t,X_{t})|dt\Big] <\infty.$ \\
For a given $t\ge s$, we denote by $Q_{\omega}(A)=Q(\omega,A): \Omega \times \CF_t \to [0,1]$ the regular conditional distribution of $\mathbf{Q}^{s,x}$ given $\mathcal{F}_{t}$. Then there exists a $\mathbf{Q}^{s,x}$-null set $N\in
\mathcal{F}_{t}$ such that, for each $\omega \notin N$,   $Q_{\omega}$ solves the martingale problem for $L_{t}$ starting from $(t, \omega(t))$ and
\[
\mathbf{E}_{Q_{\omega}}\Big[\int_{t}^{\infty} e^{-\lambda (u-s)}|b(u,X_{u})|du\Big] <\infty.
\]
\end{lemma}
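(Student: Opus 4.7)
The proof follows the classical Stroock--Varadhan template (cf.\ \cite[Theorem 6.1.3]{MR2190038}), with small adjustments to accommodate the unbounded drift $b$. My plan is to (a) verify the two conclusions along a countable collection of test data, obtaining a single $\mathbf{Q}^{s,x}$-null set, and (b) extend the martingale identity to all $f\in C_b^2(\mathbb{R}^d)$ by an approximation argument that crucially uses hypothesis (ii).

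First I would fix a countable set $\mathcal{C}\subset C_b^2(\mathbb{R}^d)$ dense in the $\|\cdot\|_{C_b^2(\mathbb{R}^d)}$-norm, together with a countable generating algebra $\mathcal{H}_{r_1}$ for $\mathcal{F}_{r_1}$ consisting of finite products $\Phi=\prod_{i}g_i(X_{u_i})$ with rational $u_i\le r_1$ and $g_i$ in a countable dense subset of $C_b(\mathbb{R}^d)$. For each $f\in\mathcal{C}$, each rational pair $t\le r_1<r_2$ and each $\Phi\in\mathcal{H}_{r_1}$, the martingale property of $M_u^f:=f(X_u)-\int_s^u L_rf(X_r)\,dr$ under $\mathbf{Q}^{s,x}$, combined with the defining property of the regular conditional distribution, gives
\[
0=\mathbf{E}_{\mathbf{Q}^{s,x}}\!\bigl[\Phi\,(M_{r_2}^f-M_{r_1}^f)\bigr]=\mathbf{E}_{\mathbf{Q}^{s,x}}\!\bigl[\Phi\cdot \mathbf{E}_{Q_\omega}[M_{r_2}^f-M_{r_1}^f]\bigr].
\]
Since the inner conditional expectation is $\mathcal{F}_{r_1}$-measurable in $\omega$ and $\Phi$ ranges over a generating algebra, the inner quantity vanishes $\mathbf{Q}^{s,x}$-a.s. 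Taking the union over the countable index set produces a single null set $N_1\in\mathcal{F}_t$ off of which $M^f$ is a $Q_\omega$-martingale on $[t,\infty)$ for every $f\in\mathcal{C}$.

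Next I would dispose of the initial condition and the integrability claim. Because $X_u$ is $\mathcal{F}_t$-measurable for $u\le t$, one has $Q_\omega(X_u=\omega(u))=1$ for each fixed $u$ outside a null set depending on $u$; the c\`adl\`ag property promotes a countable dense collection of such $u$'s to $Q_\omega(X_u=\omega(u),\,\forall u\le t)=1$ off a null set $N_2$, and in particular $Q_\omega(X_t=\omega(t))=1$. By Fubini and hypothesis (ii),
\[
\mathbf{E}_{\mathbf{Q}^{s,x}}\!\Bigl[\mathbf{E}_{Q_\omega}\!\Bigl[\int_{t}^{\infty}e^{-\lambda(u-s)}|b(u,X_u)|\,du\Bigr]\Bigr]<\infty,
\]
so the inner expectation is finite off a null set $N_3$, which already gives the integrability conclusion.

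Finally, on the complement of $N:=N_1\cup N_2\cup N_3$, I would extend the martingale identity from $\mathcal{C}$ to every $f\in C_b^2(\mathbb{R}^d)$. Given such $f$, choose $f_n\in\mathcal{C}$ with $\|f_n-f\|_{C_b^2(\mathbb{R}^d)}\to 0$; then $f_n(X_u)\to f(X_u)$ uniformly, and the bound $|L_r(f_n-f)(y)|\le C\|f_n-f\|_{C_b^2(\mathbb{R}^d)}(1+|b(r,y)|)$ (here $A(f_n-f)$ is controlled by a constant multiple of $\|f_n-f\|_{C_b^2}$ via $\int(1\wedge|z|^2)\nu(dz)<\infty$, as already exploited in Lemma \ref{WUSlemma33}), combined with the $Q_\omega$-integrability of $\int_t^{\cdot}|b(u,X_u)|du$ and dominated convergence, yields the martingale identity for $f$. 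I expect the main obstacle to lie precisely in this final extension: the classical Stroock--Varadhan argument works for bounded generators, whereas here $L_u$ contains the unbounded drift $b(u,\cdot)\cdot\nabla$, and the approximation goes through only because hypothesis (ii) transfers from $\mathbf{Q}^{s,x}$ to $Q_\omega$ via the Fubini step above.
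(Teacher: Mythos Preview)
Your overall strategy matches the paper's: both follow the Stroock--Varadhan template of \cite[Theorem~6.1.3]{MR2190038}, establishing the martingale transfer on a countable family of test functions to obtain a single null set, using Fubini on hypothesis~(ii) to push the integrability from $\mathbf{Q}^{s,x}$ down to $Q_\omega$, and then extending the martingale identity by approximation, with the $Q_\omega$-integrability of $|b|$ driving the dominated-convergence step. The paper compresses your first step into a citation of \cite[Theorem~1.2.10]{MR2190038} and is silent on the initial-condition verification you spell out, but the content is the same.

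There is, however, a genuine gap in your choice of countable core: $C_b^2(\mathbb{R}^d)$ is \emph{not} separable in the $\|\cdot\|_{C_b^2}$-norm. For instance, with $\phi\in C_c^\infty(\mathbb{R}^d)$ supported in the unit ball and $\phi(0)=1$, the family $\{\sum_{n\in S}\phi(\cdot-3ne_1):S\subset\mathbb{N}\}$ is uncountable with pairwise sup-distance at least~$1$. Hence no countable $\|\cdot\|_{C_b^2}$-dense $\mathcal{C}$ exists, and your final extension step has no approximating sequence to feed on. The paper avoids this by taking the countable family inside $C_0^\infty(\mathbb{R}^d)$, where separability is available. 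Your argument then repairs as follows: run everything with $\mathcal{C}$ countable and dense in $C_0^\infty$, obtaining the $Q_\omega$-martingale property for all $f\in C_0^\infty$; to reach an arbitrary $f\in C_b^2$, approximate $fh_n\in C_c^2$ (with the cutoffs $h_n$ of Lemma~\ref{WUSlemma33}) by mollification to land in $C_0^\infty$, and then let $n\to\infty$ using the bounded-pointwise convergence $A(fh_n)\to Af$ from Lemma~\ref{WUSlemma33} together with the $Q_\omega$-integrability of $\int_t^\cdot|b(u,X_u)|\,du$. Your diagnosis of where hypothesis~(ii) enters is exactly right; only the separable core must be chosen correctly.
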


\begin{proof}
We follow the proof of \cite[Theorem~6.1.3]{MR2190038}. Let $\big\{f_{n}:f_{n}\in C_{0}^{\infty}(\mathbb{R}^{d}), \ n \in \BN \big\}$ be dense in $C_{0}^{\infty}(\mathbb{R}^{d})$. By \cite[Theorem 1.2.10]{MR2190038}, for each $f_{n}$, there exists $N_{n}\in \mathcal{F}_{t}$ such that $\mathbf{Q}^{s,x}(N_{n})=0$ and, for all $\omega \notin N_{n}$,
\[
    M^{f_{n}}_u:=f_{n}(X_{u})-f_{n}(X_{t})-\int^{u}_{t}L_rf_{n}(r,X_{r})dr, \quad u \ge t, \]
is an $\mathcal{F}_{u}$-martingale after time $t$ with respect to $Q_{\omega}$.

By (ii), we have
\[
\int_D \mathbf{E}_{Q_{\omega}}\Big[\int_{t}^{\infty} e^{-\lambda (u-s)}|b(u,X_{u})|du\Big] \mathbf{Q}^{s,x}(d\omega)<\infty,
\]
so there exists $\mathbf{Q}^{s,x}$-null set $N_{0} \in \CF_{t}$ such that
\begin{equation}\label{WUSeqnewlemmaapp}
\mathbf{E}_{Q_{\omega}}\Big[\int_{t}^{\infty} e^{-\lambda (u-s)}|b(u,X_{u})|du\Big] <\infty \quad\text{for all} \  \omega \notin N_{0}.\end{equation}
Let $N:=\cup_{n \ge 0} N_{n}.$

We now fix $\omega \in \Omega \setminus N$.  For any $f \in C^{\infty}_{0}(\mathbb{R}^{d})$, we can find $f_{n_{k}}$ such that $f_{n_{k}} \to f$ in $C^{\infty}_{0}(\mathbb{R}^{d})$ as $k \to \infty$. In view of (\ref{WUSeqnewlemmaapp}), we have  for all $u \ge t$,
\[
    M^{f_{n_{k}}}_{u } \to M^{f}_{u } \qquad Q_{\omega}\mbox{-a.s.} \quad \mbox{as} \quad k \to \infty.\]
By (\ref{WUSeqnewlemmaapp}), dominated convergence theorem and the martingale property of $M^{f_{n_{k}}}$, we see that $M^{f}$ is also an $\mathcal{F}_{u}$-martingale after $t$ with respect to $Q_{\omega}$. Thus $Q_{\omega}$ solves the martingale problem for $L_{t}$ starting from $(t,\omega(t))$.
\end{proof}


\begin{proposition}\label{WUSprop48}
Let $d\ge2$ and $1<\alpha<2$. Assume that the $\alpha$-stable process $S$ is non-degenerate and Assumption \ref{WUSassumption41} is satisfied. Then for each $(s,x)\in \hs$, solutions of the SDE (\ref{WUSeqsect1}) are weakly unique.
\end{proposition}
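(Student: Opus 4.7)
The strategy is to show that for any weak solution (equivalently, any solution of the martingale problem for $L_t$) starting from $(s,x)$, the associated time-space resolvent $V^\lambda$ coincides with $G^\lambda$ from (\ref{WUSeqlemma440}). This pins down the one-dimensional distributions, and then a standard Stroock--Varadhan conditioning argument based on Lemma \ref{WUSnewlemmamgt} promotes this to uniqueness of all finite-dimensional marginals.

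\textbf{Step 1 (resolvent identity for smooth inputs).} Fix a solution $\bfP$ to the martingale problem for $L_t$ starting from $(s,x)$, and define
\[
V^{\lambda}f(s,x):=\bfE\Big[\int_0^{\infty}e^{-\lambda t}f(s+t,X_t)\,dt\Big]
\]
for nonnegative measurable $f$. Given $g\in C^{1,2}_b(\hs)$, Proposition \ref{WUSprop31} yields $f:=R^{\lambda}g\in C^{1,2}_b(\hs)$ with $(\lambda-\partial_s-A)f=g$. Applying the time-dependent martingale characterization (\ref{WUSeq2defimp}) to $e^{-\lambda t}f(s+t,X_t)$ and taking expectations, as in the derivation of (\ref{thmunieq101}), we obtain $\lambda V^{\lambda}f=f(s,x)+V^{\lambda}(\partial_s f+Af+b\cdot\nabla f)$, which simplifies to
\[
V^{\lambda}g=R^{\lambda}g(s,x)+V^{\lambda}(BR^{\lambda}g),\qquad g\in C^{1,2}_b(\hs).
\]
A mollification followed by a monotone class argument, exactly as in the passage from (\ref{neweqvn0}) to (\ref{WUSeqlemma43vnl}), extends this identity to all $g\in\CB_b(\hs)$.

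\textbf{Step 2 (Krylov-type estimate).} The key obstacle is producing an integrable majorant that legitimizes iterating the identity of Step 1, i.e.\ showing $V^{\lambda}(|b|)<\infty$. Apply the extended identity with $g=|b|\wedge k$ and iterate:
\[
V^{\lambda}(|b|\wedge k)=\sum_{i=0}^{N}R^{\lambda}(BR^{\lambda})^{i}(|b|\wedge k)(s,x)+V^{\lambda}(BR^{\lambda})^{N+1}(|b|\wedge k).
\]
Bounding the remainder by $\|\nabla R^{\lambda}(BR^{\lambda})^{N}(|b|\wedge k)\|\cdot V^{\lambda}(|b|\wedge k)$ and invoking the estimate (\ref{WUSeqlemma441}) (which gives $\|\nabla R^{\lambda}(BR^{\lambda})^{N}(|b|\wedge k)\|\le L_{\lambda}k\,\kappa_{\lambda}^{N}$) together with $\kappa_{\lambda}<1$ for $\lambda>\lambda_0$, the remainder vanishes as $N\to\infty$ (since $V^{\lambda}(|b|\wedge k)\le k/\lambda<\infty$). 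Thus
\[
V^{\lambda}(|b|\wedge k)=\sum_{i=0}^{\infty}R^{\lambda}(BR^{\lambda})^{i}(|b|\wedge k)(s,x)\le G^{\lambda}(|b|)(s,x),
\]
and monotone convergence in $k$ yields $V^{\lambda}(|b|)\le G^{\lambda}(|b|)(s,x)<\infty$ by Remark \ref{WUSremarkGlambdab}.

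\textbf{Step 3 (identification of $V^{\lambda}$ and conclusion).} With $V^{\lambda}(|b|)<\infty$ established, iterating Step 1 for $g\in\CB_b(\hs)$ gives the exact formula (\ref{eqsnforn}) with the remainder now controlled uniformly by
\[
|V^{\lambda}(BR^{\lambda})^{N+1}g|\le\|\nabla R^{\lambda}(BR^{\lambda})^{N}g\|\cdot V^{\lambda}(|b|)\le L_{\lambda}\|g\|\kappa_{\lambda}^{N}V^{\lambda}(|b|)\longrightarrow 0.
\]
Hence $V^{\lambda}g=G^{\lambda}g(s,x)$ for every $g\in\CB_b(\hs)$ and every $\lambda>\lambda_0$. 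Since the Laplace transform $t\mapsto\bfE[g(s+t,X_t)]$ is determined for each continuous bounded $g$ and $t\mapsto X_t$ is right-continuous, this uniquely determines the one-dimensional distributions of $(X_t)_{t\ge 0}$ under $\bfP$. Finally, given two solutions $\bfP^{s,x}$ and $\bfQ^{s,x}$, Lemma \ref{WUSnewlemmamgt} shows that their regular conditional distributions given $\CF_t$ are again solutions to the martingale problem starting from $(t,\omega(t))$ and satisfy the integrability condition, so by the just-established uniqueness of one-dimensional distributions they agree on $X_{t_2}$ given $\CF_{t_1}$; a routine induction on the number of time points extends this to all finite-dimensional distributions, yielding $\bfP^{s,x}=\bfQ^{s,x}$. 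The principal technical hurdle is the Krylov-type bound in Step 2; once it is in hand, the remainder of the argument is a direct transcription of the perturbative resolvent expansion carried out in Section 4 for the regularized drifts $b^{(n)}$.
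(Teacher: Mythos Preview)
Your Step 2 contains a genuine gap that breaks the argument. The remainder term is $V^{\lambda}\big((BR^{\lambda})^{N+1}(|b|\wedge k)\big)$, and since the outermost $B$ is multiplication by the \emph{full} drift $b$, one has
\[
\big|(BR^{\lambda})^{N+1}(|b|\wedge k)\big|\le |b|\cdot\big\|\nabla R^{\lambda}(BR^{\lambda})^{N}(|b|\wedge k)\big\|,
\]
so the correct bound on the remainder is $\|\nabla R^{\lambda}(BR^{\lambda})^{N}(|b|\wedge k)\|\cdot V^{\lambda}(|b|)$, not $\cdot\,V^{\lambda}(|b|\wedge k)$ as you wrote. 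But $V^{\lambda}(|b|)$ is exactly the quantity you are trying to show is finite, so the argument is circular. There is a second, related issue: the identity $V^{\lambda}g=R^{\lambda}g+V^{\lambda}(BR^{\lambda}g)$ is established only for $g\in\CB_b(\hs)$; to iterate it you would need it for $g=BR^{\lambda}(|b|\wedge k)$, which is \emph{not} bounded (it is dominated by a multiple of $|b|$). In the paper's proof of Lemma \ref{newlemmasn} the analogous iteration works because $b^{(n)}$ is bounded, and in the proof of Proposition \ref{WUSprop48} it works only \emph{after} $V_n^{\lambda}(|b|)<\infty$ has been established by other means, via dominated convergence.

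The paper breaks this circularity by a localization device that you have omitted. Given the arbitrary solution $\mathbf{Q}^{s,x}$, one introduces the stopping times $\tau_n=n\wedge\inf\{t\ge s:\int_s^t|b(u,X_u)|du>n\}$ (which tend to $\infty$ a.s.\ by the assumed pathwise integrability (\ref{WUSwsintegrcondi})), and concatenates $\mathbf{Q}^{s,x}$ before $\tau_n$ with the already-constructed solution $\mathbf{P}^{\tau_n,X_{\tau_n}}$ after $\tau_n$, producing a new measure $\mathbf{Q}^{s,x}_n$. For this concatenated measure the integrability $V_n^{\lambda}(|b|)<\infty$ is immediate (at most $n$ before $\tau_n$, and bounded by $\|G^{\lambda}(|b|)\|$ after), so the resolvent iteration is now legitimate and yields $\mathbf{Q}^{s,x}_n=\mathbf{P}^{s,x}$; letting $n\to\infty$ gives $\mathbf{Q}^{s,x}=\mathbf{P}^{s,x}$. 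In short, the a priori Krylov-type bound you need in Step 2 does not follow from the resolvent identity alone; the stopping-time concatenation with the reference solution is the missing ingredient.
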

\begin{proof}Our proof is adapted from the proof of  \cite[Proposition 5.1]{MR1964949}. Consider an arbitrary weak solution of (\ref{WUSeqsect1}), that is, a non-degenerate $\alpha$-stable process $S=(S_t)_{t \ge 0}$ and a c\`adl\`ag process $Y=(Y_t)_{t \ge 0}$ that are both defined on some  probability space $(\Omega, \CA, \mathbf{Q})$ and are such that
\[
Y_{t}=x+S_{t}+\int_0^tb(s+u, Y_{u})du \quad \mbox{a.s.},  \quad \forall t\ge 0.
\]
Define a measurable map $\Phi: \Omega \to D$ by
\[
\Omega \ni \omega \mapsto \Phi(\omega) \in D, \quad \mbox{where} \quad \Phi(\omega)(t):=\begin{cases} x, &t \le s,\\ Y_{t-s}(\omega) , &t>s. \end{cases}
\]
Let  $\mathbf{Q}^{s,x}$ be the image measure  of $\mathbf{Q}$ on $(D,\CD)$ under the map $\Phi$. Then it is routine to check that $\mathbf{Q}^{s,x}$ is a solution to the martingale problem for $L_{t}=A+b(t,\cdot) \cdot \nabla$ starting from $(s,x)$. To show that weak uniqueness  for (\ref{WUSeqsect1}) holds, it suffices to prove $\mathbf{Q}^{s,x}=\mathbf{P}^{s,x}$ on $(D,\CD)$, where $\mathbf{P}^{s,x}$ is defined  in Corollary \ref{WUScoro47}.

Let $(\mathcal{M}_{t})_{t \ge0}$ be the usual augmentation of $(\CF_{t})_{t \ge0}$ with respect to the measure $\mathbf{Q}^{s,x}$. Define a sequence of $\CF_{t+}$-stopping times
\[\sigma_{n}:=\inf\{t\ge s: \int_{s}^{t}|b(u,X_{u})|du > n\}, \quad n \in \BN,  \]
and let
\[
 \tau_{n}:=\sigma_{n}\land n  \quad \mbox{for} \ n \in \BN \ \ \mbox{with} \ \ n \ge s.
\]
Clearly, $\sigma_n$ and $\tau_n$ are also $\CM_{t}$-stopping times. According to the condition (\ref{WUSeqsect12}), we have $\tau_n \to \infty$ $\ \mathbf{Q}^{s,x}$-a.s..

For each fixed $\omega \in D$, it follows from \cite[Lemma 6.1.1]{MR2190038} that there is a unique probability measure $\delta_{\omega} \bigotimes_{ \tau_{n}(\omega)}\mathbf{P}^{ \tau_{n}(\omega), \omega(\tau_{n})}$ on $(D, \CD)$ such that
\[
\textstyle \delta_{\omega} \bigotimes_{ \tau_{n}(\omega)}\mathbf{P}^{ \tau_{n}(\omega), \omega(\tau_{n})}\big(X_t=\omega(t), \ 0\le t \le \tau_n(\omega)\big)=1 \]
and
\[
\textstyle \delta_{\omega} \bigotimes_{ \tau_{n}(\omega)}\mathbf{P}^{ \tau_{n}(\omega), \omega(\tau_{n})}(A)=\mathbf{P}^{ \tau_{n}(\omega),\omega(\tau_{n})}(A), \quad A \in \CF^{\tau_{n}(\omega)},
\]
where $\CF^{t}:=\sigma(X(r): r\ge t)$ for $t \ge0$. In view of Corollary \ref{WUScoro47}, we can easily check that $\delta_{(\cdot)} \bigotimes_{ \tau_{n}(\cdot)}\mathbf{P}^{ \tau_{n}(\cdot), (\cdot)(\tau_{n})}$  is a probability kernel from $(D, \CM_{\tau_n})$ to $(D, \CD)$. For details, the reader is referred to the proof of \cite[Theorem~6.1.2]{MR2190038}. Thus $\delta_{(\cdot)} \bigotimes_{ \tau_{n}(\cdot)}\mathbf{P}^{ \tau_{n}(\cdot), (\cdot)(\tau_{n})}$ induces a  probability measure $\mathbf{Q}^{s,x}_{n}$ on $(D, \CD)$ with
\[
    \mathbf{Q}^{s,x}_{n}(A)= \int_{D}\textstyle\delta_{\omega} \bigotimes_{ \tau_{n}(\omega)}\mathbf{P}^{ \tau_{n}(\omega), \omega(\tau_{n})}(A)\mathbf{Q}^{s,x}(d\omega), \quad   A \in \mathcal{D}.\]
For each $\lambda>\lambda_0$,
\begin{align}&  \mathbf{E}_{\mathbf{Q}^{s,x}_{n}}\Big[\int_{s}^{\infty} e^{-\lambda (t-s)}|b(t,X_{t})|dt\Big] \notag \\
 =& \mathbf{E}_{\mathbf{Q}^{s,x}}\Big[\int_{s}^{\tau_{n}} e^{-\lambda (t-s)}|b(t,X_{t})|dt\Big]\notag \\
&\quad +\mathbf{E}_{\mathbf{Q}^{s,x}}\Big[e^{-\lambda (\tau_{n}-s)}\mathbf{E}_{\mathbf{P}^{\tau_{n},X_{\tau_{n}}}}\big[\int_{\tau_{n}}^{\infty} e^{-\lambda (t-\tau_{n})}|b(t,X_{t})|dt\big]\Big]\notag\\
\label{eqthm2infty1}\le & n+\mathbf{E}_{\mathbf{Q}^{s,x}}\Big[e^{-\lambda (\tau_{n}-s)}\mathbf{E}_{\mathbf{P}^{\tau_{n},X_{\tau_{n}}}}\big[\int_{\tau_{n}}^{\infty} e^{-\lambda (t-\tau_{n})}|b(t,X_{t})|dt\big]\Big]. 
 \end{align}
Just as in (\ref{WUSeqthm45491}), we have
\begin{equation}\label{eqthm2infty2}
\mathbf{E}_{\mathbf{P}^{\tau_{n},X_{\tau_{n}}}}\Big[\int_{\tau_{n}}^{\infty} e^{-\lambda (t-\tau_{n})}|b(t,X_{t})|dt\Big]= G^{\lambda}(|b|)(\tau_{n},X_{\tau_{n}}) \le  \|G^{\lambda}(|b|)\| < \infty.
\end{equation}
It follows from (\ref{eqthm2infty1}) and (\ref{eqthm2infty2}) that
\begin{equation}\label{WUSeqprop483}
\mathbf{E}_{\mathbf{Q}^{s,x}_{n}}\Big[\int_{s}^{\infty} e^{-\lambda (t-s)}|b(t,X_{t})|dt\Big] <\infty.
\end{equation}

Next, we proceed to show $\mathbf{Q}_n^{s,x}=\mathbf{P}^{s,x}$. For $f \in C_{0}^{\infty}(\mathbb{R}^{d})$, set
\[
    M^{f}_t:=f(X_{t})-f(X_{s})-\int^{t}_{s}L_uf(u,X_{u})du, \quad t \ge s.
\]
Then  $M^{f}:=(M^{f}_t)_{t\ge s}$ is an $\mathcal{F}_{t}$-martingale after time $s$ with respect to the measure $\mathbf{Q}_{n}^{s,x}$. To see this, let $s \le t_1 \le t_2$ and $A \in \CF_{t_1}$. Then
\begin{align*}
& \mathbf{E}_{\mathbf{Q}^{s,x}_{n}}[M^{f}_{t_2};A] \\
=& \int_{\{\tau_{n} \le t_1\}} \mathbf{E}_{\mathbf{P}^{ \tau_{n}(\omega), \omega(\tau_{n})}}[M^{f}_{t_2};A]\mathbf{Q}^{s,x}(d\omega)+\int_{\{t_1 < \tau_{n} \le t_2\}}\textstyle M^{f}_{\tau_n}(\omega) \bfi_{A}(\omega)\mathbf{Q}^{s,x}(d\omega)  \\
& \quad + \int_{\{ \tau_{n} > t_2\}}\textstyle M^{f}_{t_2}(\omega) \bfi_{A}(\omega)\mathbf{Q}^{s,x}(d\omega) \\
=& \int_{ \{\tau_{n} \le t_1\}}\textstyle \mathbf{E}_{\mathbf{P}^{ \tau_{n}(\omega), \omega(\tau_{n})}}[M^{f}_{t_1};A]\mathbf{Q}^{s,x}(d\omega)+\mathbf{E}_{\mathbf{Q}^{s,x}}\big[M^{f}_{t_2 \land \tau_n};A\cap \{\tau_{n} > t_1\}\big].
\end{align*}
Since $A\cap \{\tau_{n} > t_1\} \in \CM_{t_1 \land \tau_n}$, by optional sampling theorem,
\begin{align*}
 \mathbf{E}_{\mathbf{Q}^{s,x}}\big[M^{f}_{t_2 \land \tau_n};A\cap \{\tau_{n} > t_1\}\big]=& \mathbf{E}_{\mathbf{Q}^{s,x}}\big[M^{f}_{t_1 \land \tau_n};A\cap \{\tau_{n} > t_1\}\big] \\
 =& \mathbf{E}_{\mathbf{Q}^{s,x}}\big[M^{f}_{t_1};A\cap \{\tau_{n} > t_1\}\big].
\end{align*}
So
\begin{align*}
 & \mathbf{E}_{\mathbf{Q}^{s,x}_{n}}[M^{f}_{t_2};A] \\
=&  \int_{ \{\tau_{n} \le t_1\}}\textstyle \mathbf{E}_{\mathbf{P}^{ \tau_{n}(\omega), \omega(\tau_{n})}}[M^{f}_{t_1};A]\mathbf{Q}^{s,x}(d\omega)+ \mathbf{E}_{\mathbf{Q}^{s,x}}\big[M^{f}_{t_1};A\cap \{\tau_{n} > t_1\}\big] \\
=& \int_{ \{\tau_{n} \le t_1\}} \mathbf{E}_{\mathbf{P}^{ \tau_{n}(\omega), \omega(\tau_{n})}}[M^{f}_{t_1};A]\mathbf{Q}^{s,x}(d\omega) +\int_{\{ \tau_{n}>t_1 \}} M^{f}_{t_1}(\omega) \bfi_{A}(\omega)\mathbf{Q}^{s,x}(d\omega) \\
=& \mathbf{E}_{\mathbf{Q}^{s,x}_{n}}[M^{f}_{t_1};A].
\end{align*}
This shows that $M^{f}$ is an $\mathcal{F}_{t}$-martingale after time $s$ with respect to $\mathbf{Q}_{n}^{s,x}$. It follows from (\ref{WUSeqprop483}) and \cite[Theorem~4.2.1]{MR2190038} that,
for any $f\in C^{1,2}_b(\hs)$,
\begin{align*}& f(t,X_{t})-f(s,X_{s}) -\int_{s}^{t}(\frac{\partial f}{\partial u}+L_{u}f)(u,X_{u})du
\end{align*}
is an $\CF_t$-martingale after time $s$ with respect to $\mathbf{Q}^{s,x}_{n}$. As a consequence,
\[
    \mathbf{E}_{\mathbf{Q}^{s,x}_{n}}[f(t,X_{t})]-f(s,x)= \mathbf{E}_{\mathbf{Q}^{s,x}_{n}}\Big[\int_{s}^{t}(\frac{\partial f}{\partial u}+L_{u}f)(u,X_{u})du\Big]. \]

We now define a linear functional $V_{n}^{\lambda}$ as follows. For any measurable function $f$ on $\hs$ with
\[
\mathbf{E}_{\mathbf{Q}^{s,x}_{n}}\Big[\int_{s}^{\infty} e^{-\lambda (t-s)}|f(t,X_{t})|dt\Big]<\infty,
\]
let
\[
V_{n}^{\lambda}f:=\mathbf{E}_{\mathbf{Q}^{s,x}_{n}}\Big[\int_{s}^{\infty} e^{-\lambda (t-s)}f(t,X_{t})dt\Big]. \]
By the same argument that we used to obtain (\ref{neweqvn0}), we get
\begin{equation}\label{neweqvn}
    V_{n}^{\lambda}g=R^{\lambda}g(s,x)+V_{n}^{\lambda}BR^{\lambda}g, \quad g \in \CB_b(\hs).
\end{equation}
where $BR^{\lambda}$ is defined by (\ref{defiofBR}). Since, by (\ref{WUSeqprop483}), $V_{n}^{\lambda}(|b|)< \infty$, we can use the equality (\ref{neweqvn}) and dominated convergence theorem to get
\[
    V_{n}^{\lambda}BR^{\lambda}g=R^{\lambda}BR^{\lambda}g(s,x)+V_{n}^{\lambda}(BR^{\lambda})^{2}g  ,\quad g \in \CB_b(\hs),\]
which implies
\[
V_{n}^{\lambda}g=R^{\lambda}g(s,x)+ R^{\lambda}BR^{\lambda}g(s,x)+V_{n}^{\lambda}(BR^{\lambda})^{2}g  ,\quad g \in \CB_b(\hs). \]
After a simple induction, we obtain
\[
    V_{n}^{\lambda}g=\sum_{i=0}^{k}R^{\lambda}(BR^{\lambda})^{i}g(s,x)+ V_{n}^{\lambda}(BR^{\lambda})^{k+1}g ,\quad g \in \CB_b(\hs). \]
For $\lambda>\lambda_0$, by (\ref{WUSeqlemma437}) and (\ref{WUSeqlemma441}), we get
\[
            |V_{n}^{\lambda}(BR^{\lambda})^{k+1}g|\le  \|\nabla R^{\lambda}(BR^{\lambda})^{k}g\| V_{n}^{\lambda}(|b|) \to 0 \quad \mbox{as} \ \ k \to \infty.
            \]
It follows from Lemma \ref{WUSlemma44} and (\ref{WUSeqcoro47}) that, for any $\lambda>\lambda_0$ and $g \in \CB_b(\hs)$,
\begin{align*}
     \mathbf{E}_{\mathbf{Q}^{s,x}_{n}}\Big[\int_{s}^{\infty} e^{-\lambda (t-s)}g(t,X_{t})dt\Big]=V_{n}^{\lambda}g=&\sum_{i=0}^{\infty}R^{\lambda}(BR^{\lambda})^{i}g(s,x) \\
     =& \mathbf{E}_{\mathbf{P}^{s,x}}\Big[\int_{s}^{\infty} e^{-\lambda (t-s)}g(t,X_{t})dt\Big].\end{align*}
By the uniqueness of the Laplace transform, we have
\[
\mathbf{E}_{\mathbf{Q}^{s,x}_{n}}[f(X_{t})]=\mathbf{E}_{\mathbf{P}^{s,x}}[f(X_{t})],\quad \forall f\in C_{b}(\mathbb{R}^{d}),\ t \ge s.\]
This means that one-dimensional distributions of $\mathbf{Q}^{s,x}_{n}$ and $\mathbf{P}^{s,x}$ are the same. By (\ref{WUSeqprop483}), Lemma \ref{WUSnewlemmamgt} and the argument in the proof of \cite[Theorem~6.2.3]{MR2190038}, we conclude that multiple-dimensional distributions of $\mathbf{Q}^{s,x}_{n}$ and $\mathbf{P}^{s,x}_{n}$ also coincide, that is,   $\mathbf{Q}^{s,x}_{n}=\mathbf{P}^{s,x}$ on $(D, \CD)$.

Note that $\tau_n \to \infty$ $\ \mathbf{Q}^{s,x}$-a.s.. With the same reason, we have $\tau_n \to \infty$ $\ \mathbf{P}^{s,x}$-a.s.. Since $\mathbf{Q}^{s,x}_{n}$ and $\mathbf{Q}^{s,x}$ coincide before $\tau_n$, it follows from dominated convergence that,
for $0 \le r_{1} \le \cdots \le r_{l} $ and $g_{1}, \cdots, g_{l} \in C_{b}(\mathbb{R}^{d})$ with $l \in \BN$,
\begin{align*}
 \mathbf{E}_{\mathbf{Q}^{s,x}}\big[\textstyle\prod_{j=1}^{l}g_{j}(X_{r_{j}})\big] =& \lim_{n\to\infty}\mathbf{E}_{\mathbf{Q}^{s,x}}\big[\textstyle\prod_{j=1}^{l}g_{j}(X_{r_{j}\land \tau_n})\big] \\
  =& \lim_{n\to\infty}\mathbf{E}_{\mathbf{Q}_{n}^{s,x}}\big[\textstyle\prod_{j=1}^{l}g_{j}(X_{r_{j}\land \tau_n})\big] \\
  = & \lim_{n\to\infty}\mathbf{E}_{\mathbf{P}^{s,x}}\big[\textstyle\prod_{j=1}^{l}g_{j}(X_{r_{j}\land \tau_n})\big] \\
  =& \ \mathbf{E}_{\mathbf{P}^{s,x}}\big[\textstyle\prod_{j=1}^{l}g_{j}(X_{r_{j}})\big].
\end{align*}
Thus $\mathbf{Q}^{s,x}=\mathbf{P}^{s,x}$ on $(D, \CD)$. This completes the proof.
\end{proof}

\section{Existence and Uniqueness of Weak Solutions: Global Case}
In this section we study the general case and prove Theorem \ref{WUSmainthm}. In contrast to Theorem \ref{WUSthm46} and Proposition \ref{WUSprop48}, the main task here is to remove the restriction that $supp(b)\subset[0,T]\times \mathbb{R}^{d}$, which was assumed in the previous section.

\subsection*{Proof of Theorem \ref{WUSmainthm}}
``\textit{Existence}": For each $n \in \BN$, consider the drift $a_n: \hs \to \Rd$ defined by
\begin{equation}\label{WUSeqmainthm0}
a_n(t,\cdot):=\begin{cases}b(t,\cdot), & t \le n, \\ 0, & \mbox{otherwise}. \end{cases}
\end{equation}
According to Theorem \ref{WUSthm46}, there exist a c\`adl\`ag process $X^n=(X^n_t)_{t \ge 0}$ and a non-degenerate $\alpha$-stable process $S^n=(S^n_t)_{t \ge 0}$ with characteristic exponent $\psi$ that are both defined on some probability space $(\Omega_n, \CA_n, \bfP_n)$ such that
\begin{equation}\label{WUSeqmainthm}
X^n_{t}=x+S^n_{t}+\int_0^ta_n(s+u, X_{u})du \quad \mbox{a.s.},  \quad \forall t\ge 0.
\end{equation}
Define $  \Phi_n: (\Omega_n, \CA_n) \to (D,\CD)$ by
\[
\Omega_n \ni \omega \mapsto \Phi_n(\omega) \in D, \quad \mbox{where} \quad \Phi_n(\omega)(t):=\begin{cases} x, &t \le s,\\ X^n_{t-s}(\omega) , &t>s. \end{cases}
\]
Consider the measure $\mathbf{Q}_n^{s,x}$ on $(D,\CD)$ defined as $\mathbf{Q}_n^{s,x}:=\bfP_n \circ  (\Phi_n)^{-1}$, that is,  $\mathbf{Q}_n^{s,x}$ is the image measure of $\mathbf{P}_n$ under the map $\Phi_n$. Since $supp(a_{n})\subset[0,T]\times \mathbb{R}^{d}$, by the local weak uniqueness for (\ref{WUSeqmainthm}) that we have shown in Proposition \ref{WUSprop48}, the measures $\mathbf{Q}_n^{s,x}$, $n\in \BN$, must be consistent, that is, $\mathbf{Q}_{n+1}^{s,x}|_{\CF_{n}}=\mathbf{Q}_n^{s,x}|_{\CF_{n}}$ for all $n \in \BN$.  It follows from the projective limit theorem (see, e.g., \cite[Corollary~6.15]{MR1876169}) that there exists a  probability measure $\mathbf{Q}^{s,x}$ on $(D,\CD)$ such that $\mathbf{Q}^{s,x}|_{\CF_{n}}=\mathbf{Q}_n^{s,x}|_{\CF_{n}}$ for all $n \in \BN$. Let $X=(X_t)_{t \ge 0}$ be the canonical process defined on $(D,\mathcal{D})$. For any $\xi \in \Rd$ and $t \ge 0$, by choosing $n \in \BN$ such that $n \ge t+s$, we obtain
\begin{align*}
 \bfE_{\mathbf{Q}^{s,x}}\Big[e^{i\xi \cdot (X_{s+t}-x -\int_0^tb(s+u,X_{s+u})du)}\Big]
= & \bfE_{\mathbf{Q}_n^{s,x}}\Big[e^{i\xi \cdot (X_{s+t}-x -\int_0^tb(s+u,X_{s+u})du)}\Big] \\
=& \bfE_{\mathbf{P}_n}\Big[e^{i\xi \cdot (X^n_{t}-x -\int_0^{t}b(s+u,X^n_u)du)}\Big] \\
=& \bfE_{\mathbf{P}_n}\Big[e^{i\xi \cdot (X^n_{t}-x -\int_0^{t}a_n(s+u,X^n_u)du)}\Big] \\
=& \bfE_{\mathbf{P}_n}\Big[e^{i\xi \cdot S^n_{t}}\Big]=e^{-t\psi(\xi)},
\end{align*}
that is, under the measure $\mathbf{Q}^{s,x}$, the process
\[
S_t:=X_{s+t}-x -\int_0^tb(s+u,X_{s+u})du, \quad t \ge 0,
\]
is an $\alpha$-stable process with characteristic exponent $\psi$. Define $\tilde{X}:=(\tilde{X}_{t})_{t \ge 0}$ with $\tilde{X}_{t}:=X_{t+s}$, $t \ge 0$. Then $\tilde{X}$ satisfies
\[
\tilde{X}_{t}=x+S_{t}+\int_0^tb(s+u, \tilde{X}_{u})du \quad \mathbf{Q}^{s,x}\mbox{-a.s.},  \quad \forall t\ge 0.
\]
Thus $\tilde{X}$ is a weak solution to the SDE (\ref{WUSeqsect1}).

``\textit{Uniqueness}": Consider an arbitrary weak solution of (\ref{WUSeqsect1}), that is, a non-degenerate $\alpha$-stable process $S=(S_t)_{t \ge 0}$ and a c\`adl\`ag process $Y=(Y_t)_{t \ge 0}$ that are both defined on some probability space $(\Omega, \CA, \mathbf{Q})$ such that
\[
Y_{t}=x+S_{t}+\int_0^tb(s+u, Y_{u})du \quad \mbox{a.s.},  \quad \forall t\ge 0.
\]
For $k \in \BN$ with $k\ge s$, let $a_k$ be as in (\ref{WUSeqmainthm0}), and define $Y^{k}_t:=Y_t$ for $0\le t \le k-s$ and
\[
Y^{k}_t:=Y_{k-s}+S_t-S_{k-s}, \quad t >k-s.
\]
Thus
\[
Y^{k}_{t}=x+S_{t}+\int_0^ta_k(s+u, Y^{k}_{u})du \quad \mbox{a.s.},  \quad \forall t\ge 0.
\]
By Proposition \ref{WUSprop48}, the law of $Y^k:=(Y^{k}_t)_{t\ge0}$ is uniquely determined. Since $Y_t=Y^k_t$ for $t \le k-s$, the law of the process $Y$ is  uniquely determined at least up to time $k-s$. With $k \to \infty$, we see that the law of $X$ is completely and uniquely determined.   \qed
\bibliographystyle{amsplain}

\end{document}